\colorlet{shadecolor}{yellow!20}
\DeclarePairedDelimiter\abs{\lvert}{\rvert}
\DeclarePairedDelimiter\norm{\lVert}{\rVert}
\let\oldabs\abs
\def\abs{\@ifstar{\oldabs}{\oldabs*}}
\let\oldnorm\norm
\def\norm{\@ifstar{\oldnorm}{\oldnorm*}}
\newtheorem{theorem}{Theorem}[section]
\newtheorem{lemma}[theorem]{Lemma}
\newtheorem{corollary}[theorem]{Corollary}
\newtheorem{definition}[theorem]{Definition}
\newtheorem{proposition}[theorem]{Proposition}
\newtheorem{remark}[theorem]{Remark}
\newcommand{\sgn}{\operatorname{sgn}}
\newtheorem{example}[theorem]{Example}
\def\namedlabel#1#2{\begingroup
    #2
    \def\@currentlabel{#2}
    \phantomsection\label{#1}\endgroup
}
\DeclareMathOperator{\supp}{supp}
\DeclareMathOperator*{\argmin}{arg\,min}
\newcommand{\babla}{\overline{\nabla}}
\newcommand{\beq}{\begin{equation}}
\newcommand{\eeq}{\end{equation}}
\newcommand{\baq}{\begin{equation}\begin{aligned}}
\newcommand{\eaq}{\end{aligned}\end{equation}}
\newcommand{\beqs}{\begin{equation*}}
\newcommand{\eeqs}{\end{equation*}}
\newcommand{\baqs}{\begin{equation*}\begin{aligned}}
\newcommand{\eaqs}{\end{aligned}\end{equation*}}
\newcommand{\Rd}{{\mathbb{R}^d}}
\newcommand{\R}{{\mathbb{R}}}
\newcommand{\RdRd}{{\mathbb{R}^d\times\mathbb{R}^d}}
\newcommand{\dd}{\mathrm{d}}
\newcommand{\Mloc}{\mathcal{M}}
\newcommand{\ignore}[1]{} 
\newcommand{\dckeywords}{gradient flow, nonlocal, Fokker-Planck equation, optimal transport}
\title{\huge Nonlocal cross-interaction systems on graphs:
  \\
  \vspace*{0.0cm} 
  Energy landscape and dynamics
}
\author{Georg Heinze\thanks{corresponding author}$\;\;^,$\thanks{Fakult\"at f\"ur Mathematik, Technische Universit\"at Chemnitz Reichenhainer Stra{\ss}e 41, Chemnitz, Germany. (\{georg.heinze,jfpietschmann\}@math.tu-chemnitz.de)}\,\,,\, Jan-Frederik Pietschmann$^{\dag}$, Markus Schmidtchen\thanks{Institute of Scientific Computing, Technische Universit\"at Dresden, Zellescher Weg 12-14,
01069 Dresden, Germany. (markus.schmidtchen@tu-dresden.de).}}
\begin{document}
\maketitle

\begin{abstract}
We explore the dynamical behavior and energetic properties of a model of two species that interact nonlocally on finite graphs. The authors recently introduced the model in the context of nonquadratic Finslerian gradient flows on generalized graphs featuring nonlinear mobilities. In a continuous and local setting, this class of systems exhibits a wide variety of patterns, including mixing of the two species, partial engulfment, or phase separation. This work showcases how this rich behavior carries over to the graph structure. We present analytical and numerical evidence thereof.
\end{abstract}

\section{Introduction}
In recent years there has been a growing interest in dynamics on graphs in the applied mathematics community. A particular emphasis lies on systems exhibiting a discrete gradient-flow structure, cf. \cite{MassEntropicRicciDiscreteSpaces2017, ErbarMaasGradientFlowPorousMedium2014, MaasGradFlowEntropyFiniteMarkov2011, ChowFokkerPlanckMarkovGraph2012}. 
In \cite{HPJ21}, the authors proposed a system of two nonlocally interacting species on a graph, extending the recent work \cite{EPSS2021}, which can be cast into a Finslerian gradient-flow framework. The result pushes the frontiers of current developments in the direction of coupled multi-species systems with nonlinear mobilities. The dynamics are posed on (possibly infinite) graphs where either species may inhabit each node. Each species is represented by a probability measure on the set of vertices. The set of probability measures on the graph is then equipped with a, not necessarily quadratic, extended quasi-metric. Subsequently, the interaction system is shown to exhibit a gradient-flow structure.
The results in \cite{HPJ21} can be seen as extensions of previous results from several angles:
\begin{itemize}
    \item The on-lattice dynamics of \cite{EPSS2021} are extended to multiple species, nonlinear mobilities and non-quadratic Finslerian structures.
    \item The Finslerian structure of the continuous $p$-Wasserstein spaces unveiled in \cite{Agueh2012_finsler} are carried over to a (discrete) graph setting.
    \item The research on distances induced by concave mobilities on the set of probability measures and the gradient flows in this space \cite{Lisini2010, Dolbeault_2008, Carrillo2010} is translated into a (discrete) graph setting for two species.
\end{itemize}
The driving motivation of the extension to several species is the well-known property  of these interaction systems to exhibit a rich variety of patterns depending on the intraspecific and the interspecific interactions. In \cite{DiFrancesco2013}, a link is drawn between systems of interacting particles  with two species of the form
\begin{align}\label{eq:particle}
    \frac{\dd X_l^{(i)}}{\dd t} = -\frac1N \sum_{l=1}^N \nabla K^{(i1)}(X_l^{(i)}-X_l^{(1)}) - \frac1N \sum_{l=1} \nabla K^{(i2)}(X_l^{(i)}-X_l^{(2)}),
\end{align}
where $l=1,\ldots, N$ numbers consecutively the individuals belonging to species $i=1,2$ and the potentials $K^{(ik)}$ encode the interactions between individuals of the same species (self-interactions) and members of opposing species (cross-interactions), respectively. Exploiting the $2$-Wasserstein gradient flow structure solutions  to the continuous counterpart, i.e., 
\begin{align*}
    \partial_t \rho_t^{(i)} = \nabla\cdot(\rho^{(i)}\nabla (K^{(i1)}\ast \rho^{(1)} + K^{(i2)}\ast\rho^{(2)})),
\end{align*}
are constructed for the nonlocal interaction energy
\begin{align*}
    \mathcal{E}(\rhoup) &=\frac{1}{2}\sum_{i,k=1}^2\int_{\Rd} K^{(ik)}\ast \rho^{(i)}\dd\rho^{(k)}, \qquad \text{ on } \Rd,
\end{align*}
cf. \cite{DiFrancesco2013}. Here, $\rho^{(i)}$, $i=1,2$, denote the probability densities of the two species.  The formation of a rich variety of patterns in this class of systems is widely known, for instance in the context of predator-prey dynamics \cite{di2016nonlocal},  pigment-cell interactions in zebrafish \cite{volkening2020modeling, volkening2018iridophores}, or cell-adhesion and tissue-growth \cite{armstrong2006continuum, carrillo2019population, painter2015nonlocal, murakawa2015continuous}. Additionally, at the continuous level, behavior such as engulfment, mixing, and phase-separation can be found in 
\cite{evers2017equilibria, carrillo2018zoology, berendsen2017cross, 1556-1801_2020_3_307, CARRILLO201875, burger2018sorting, burger2020segregation, carrillo2020measure, fagioli2021multiple}. 
As pointed out earlier, the model proposed in \cite{HPJ21} features nonlinear mobilities which typically arise in contexts of saturation or crowding phenomena, i.e., as the configuration becomes increasingly crowded the mobility decreases \cite{Burger2010cross, painter2015nonlocal,berendsen2017cross, bruna2017cross, bailo2021bound, mason2022macroscopic, simpson2011corrected, burger2016flow}. For convenience, let us recall the system introduced in \cite{HPJ21} which reads
\baq
	\label{eq:NL2CIE_intro}
	\partial_t\rho_t^{(i)}(x) 
	&=  - (\babla\cdot j_t^{(i)})(x),\\
	j_t^{(i)}(x,y) 
	&= m(\rho_t^{(i)}(x),\rho_t^{(i)}(y))\left((v_t^{(i)})_+(x,y)\right)^{\frac{1}{p-1}}\\ &-m(\rho_t^{(i)}(y),\rho_t^{(i)}(x))\left((v_t^{(i)})_-(x,y)\right)^{\frac{1}{p-1}},\\
	v_t^{(i)}(x,y) 
	&= - \babla \left(K^{(i1)}\ast\rho_t^{(1)}+ K^{(i2)}\ast\rho_t^{(2)}\right)(x,y),
\eaq
where $p\in(1,\infty)$ and $x,\,y  \in \R^d$, $t\geq0$. As before, $\rho_t^{(i)}$, with $i=1,2$, represent the population densities corresponding to the two species where the subscript $t$ indicated the time variable. Furthermore, the quantities $\mu$ and $\eta$ encode the structure of the graph, $m$ is a concave mobility of two variables, and the operators $\babla$ and $\babla\cdot$ are discrete analogues of gradient and divergence.

The goal of this paper is to complement the structural results of \cite{HPJ21} with an extensive discussion of the behavior of the interaction systems and to provide an insight into their dynamical and energetic properties. 

The rest of this paper is organized as follows. In Section \ref{sec:framework} we introduce the rigorous setting and the notation used throughout the article. Section \ref{sec:dynamics_energy} is dedicated to a discussion of the dynamical properties of the system and stationary states, as well as their relation to minimizers of the interaction energy. Finally, Section \ref{sec:BehaviorAndIllustrations} is devoted to illustrations of the dynamics using finite graphs and the influence of the graph parameters $\mu$, $\eta$, as well as the exponent $p$, and the scaling parameter $\beta$. Moreover, we observe aggregation and phase separation on the graph depending on the choice of potentials.

\section{Framework and notation}\label{sec:framework}

This section provides the necessary details on the setting and the model. For more details and the rigorous interpretation as a gradient flow, we refer the reader to \cite{EPSS2021,HPJ21}.

We start by introducing the graph setting. The vertices are given by a positive Radon measure $\mu\in \Mloc^+(\Rd)$, called the base measure. In this work, we will consider only finite graphs, i.e. base measures of the form
\baqs 
    \mu = \sum_{\iota=1}^N\mu_\iota\delta_{x_\iota},
\eaqs
for some $N\in\mathbb{N}$, $\mu_1,\ldots,\mu_N\in (0,\infty)$ and $x_1,\ldots,x_N\in\Rd$. We only consider the case $N\ge2$ as otherwise there are no dynamics.

The edges of the graph are defined as the set
\baqs
    G\coloneqq\{(x,y)\in\Rd\times\Rd:x\neq y, \eta(x,y)>0\},
\eaqs
where $\eta:\Rd\times\Rd\to[0,\infty)$ is a nonnegative weight function. We consider undirected graphs, i.e. we assume $\eta$ to be symmetric in the sense that $\eta(x,y) = \eta(y,x)$ for $x,y\in\Rd$. Further, we assume the restriction $\eta|_G$ to be continuous.

For $x_\iota,x_\kappa\in\supp\mu$, we shall use the notation $\eta_{\iota\kappa} = \eta(x_\iota,x_\kappa)$ and similarly for other functions.

We denote by $\mathcal{P}_\mu(\Rd)$ the space of probability measures $\rho$ on $\Rd$, which satisfy $\rho\ll\mu$, and by $\Mloc_\mu(G)$ the space of Radon measures $j$ on $G$ which satisfy $j \ll\mu\otimes\mu$.

Next we introduce nonlocal analogous of gradient and divergence. Given $\varphi:\Rd\to\R$, its nonlocal gradient is $\babla\varphi(x,y)\coloneqq \varphi(y)-\varphi(x)$ while for  $j \in \Mloc(\Rd)$ we define the nonlocal divergence $\babla\cdot j$ by
\baq\label{eq:def_divergence}
	\int_\Rd \varphi(x) \dd \babla\cdot j(x) \coloneqq -\frac{1}{2}\iint_G\babla\varphi(x,y)\eta(x,y)\dd j(x,y),\quad \forall \varphi:\Rd\to\R.
\eaq
We see that if $j$ is symmetric, i.e. $\dd j(x,y) = \dd j(y,x)$ for all $x,y\in\Rd$, then $\babla\cdot j=0$. In the antisymmetric case $\dd j(x,y) = -\dd j(y,x)$ for all $x,y\in\Rd$, $\babla\cdot j$ simplifies to
\baqs
	\int_\Rd \varphi(x) \dd \babla\cdot j(x) = \iint_G\varphi(x)\eta(x,y)\dd j(x,y),\quad \forall \varphi:\Rd\to\R.
\eaqs

Our dynamics is driven by the nonlocal cross-interaction energy given as 
\baq\label{eq:Energy_general}
    \mathcal{E}(\rhoup) &=\frac{1}{2}\sum_{i,k=1}^2\iint_{\Rd\times\Rd} K^{(ik)}(x,y)\dd\rho^{(i)}(x)\dd\rho^{(k)}(y),
\eaq
where $K^{(ik)}:\Rd\times\Rd\to\R$, $i,k=1,2$ are continuous and satisfy satisfy
\begin{align}
	\label{K2}\tag{K1} &\forall x,y\in \Rd\text{ it holds } K^{(ik)}(x,y)=K^{(ik)}(y,x)
\end{align}
To admit a gradient flow w.r.t. the nonlocal operators defined above, even on generalized graphs, the continuity assumption needs to be strengthened (see \cite{HPJ21}) to
\begin{align}
     \label{K3}\tag{K2} &\exists L_K\in(0,\infty) \text{ such that }\; \forall(x,y),(x',y')\in \Rd\times\Rd, \text{ and $i,k=1,2$, we have }\\
 	\notag &\abs{K^{(ik)}(x,y)-K^{(ik)}(x',y')}\leq L_K\big(\abs{(x,y)-(x',y')}\lor\abs{(x,y)-(x',y')}^p\big).
\end{align}
However, for the following considerations, this assumption is not required and will therefore be dropped. In our setting, \eqref{eq:Energy_general} thus becomes
\baq\label{eq:Energy graph}
\mathcal{E}(\rhoup) &=\frac{1}{2}\sum_{i,k=1}^2\sum_{\iota,\kappa=1}^N K^{(ik)}_{\iota\kappa}\rho_\iota^{(i)}\rho_\kappa^{(k)}\mu_\iota\mu_\kappa, \quad \rho\in(\mathcal{P}_\mu(\Rd))^2,
\eaq
where we used $K^{(ik)}_{\iota\kappa}\coloneqq K^{(ik)}(x_\iota,x_\kappa)$ for $i,k=1,2$ and $\iota,\kappa =1,\ldots,N$.

The dynamical system we are interested in (which can rigorously be understood as a gradient flow in the EDE sense \cite{HPJ21}) driven by the nonlocal cross-interaction energy in the nonlocal framework is then given by 
\begin{equation}
    	\label{eq:NL2CIE_general}
	\tag{\eqref{eq:NL2CIE_intro} revisited}
\begin{aligned}
	\partial_t\rho_t^{(i)}
	&=  - (\babla\cdot j_t^{(i)}),\\
	j_t^{(i)}(x,y) &= m\big(\rho_t^{(i)}(x),\rho_t^{(i)}(y)\big) \big[(v_t^{(i)})_+(x,y)\big]^{q-1} \\
	&- m\big(\rho_t^{(i)}(x),\rho_t^{(i)}(y)\big) \big[(v_t^{(i)})_-(y,x)\big]^{q-1},\\
	v_t^{(i)}(x,y) 
	&= - \beta^{(i)}\babla \left(K^{(i1)}\ast\rho_t^{(1)}+ K^{(i2)}\ast\rho_t^{(2)}\right)(x,y),
	\end{aligned}
\end{equation}
where $i=1,2$, $1<q=p/(p-1)<\infty$, $m$ is a suitable mobility function, and where we abused notation by writing $\dd j^{(i)} = j^{(i)}\dd \mu\otimes\mu$.

Note the the equations contain a possibly non-linear mobility function which we define as follows
\begin{definition}
Given two thresholds $R,S\in(0,\infty]$, we call $m\in C([0,R)\times[0,S))$ a \emph{mobility function} if it is concave and strictly positive in $(0,R)\times(0,S)$. For $(r,s)\in [0,R)\times[0,S)$ we denote $m(r,S) = \lim_{s\to S}m(r,s)$ and $m(R,s) = \lim_{r\to R}m(r,s)$. We call such a mobility $m$ \emph{upwind-admissible} if for every $s\ge 0$ we have $m(0,s)=0$.
\end{definition}
In the sequel we require $m$ to satisfy the technical assumption
\begin{equation}\label{A}\tag{A}
    R\land S<\infty \;\text{or}\; \forall r,s\ge 0: \lim_{\lambda\to\infty}\frac{1}{\lambda}m(\lambda r, \lambda s) = 0  \quad\text{or}\quad \forall s\ge 0: m(r,s)=0.
\end{equation}
Given such a mobility $m$ and $\rho^{(i)}\in\mathcal{P}_\mu(\Rd)$, $i=1,2$, we define 
\baqs
    \mathfrak{m}^{(i)}\coloneqq m\left(\frac{\dd\rho^{(i)}}{\dd\mu},\frac{\dd\rho^{(i)}}{\dd\mu}\right).
\eaqs
As with $\eta$, we will often use the notation $\mathfrak{m}^{(i)}_{\iota\kappa}\coloneqq \mathfrak{m}^{(i)}(x_\iota,x_\kappa)$.

\begin{remark}
    The assumption $m(0,s)=0$ is crucial as it ensures the non-negativity of $\rho$. However, note that this assumption excludes, among others, the choice $m\equiv 1$.
    
    The assumption \eqref{A} ensures that any family $(\rhoup_t)_{t\in[0,T]}$ solving $\eqref{eq:NL2CIE_general}$ and satisfying $\supp\rhoup_0\subset\supp\mu$ stays supported on $\supp\mu$ for all times \cite[Proposition 2.35]{HPJ21}. Since we only consider counting measures $\mu$, $\supp\rhoup_t\subset \supp\mu$ is equivalent to $\rhoup_t\ll\mu$. Our main examples $m(r,s)=r^{\theta_1}(1-s)^{\theta_2}$, $\theta_1,\theta_2 \in [0,1]$ all satisfy \eqref{A}.
\end{remark}

Since $\mu$ is a counting measure each curve  $(\rho_t)_{t\in[0,T]}\subset \mathcal{P}_\mu(\Rd)$ can be expressed as 
\baqs
    \rho_t^{(i)} = \sum_{\iota=1}^N\rho^{(i)}_\iota\mu_\iota\delta_{x_\iota},
\eaqs
for suitable functions $\rho^{(i)}_\iota: [0,T]\to[0,(\mu_\iota)^{-1}]$ satisfying $\sum_{\iota=1}^N\rho^{(i)}_\iota\mu_\iota=1$. By slight abuse of notation, we will sometimes write $\rho^{(i)}(x)\coloneqq \rho^{(i)}(\{x\})$. Using these definitions and choosing test functions $\varphi$ such that $\varphi(x_\kappa) = \delta_{\{\kappa=\iota\}}$, in \eqref{eq:def_divergence}, system \eqref{eq:NL2CIE_general} becomes
\baq\label{eq:NL2CIE}
    \frac{\dd}{\dd t}\rho_\iota^{(i)} &= \sum_{\kappa=1}^{N}\Big(\mathfrak{m}^{(i)}_{\kappa\iota}\big[(v^{(i)}_{\iota\kappa})_-\big]^{q-1}-\mathfrak{m}^{(i)}_{\iota\kappa}\big[(v^{(i)}_{\iota\kappa})_+\big]^{q-1}\Big)\eta_{\iota\kappa}\mu_\kappa,
\eaq
for $i=1,2$ and $\iota=1,\ldots,N$, where
\baq\label{eq_def:v^(i)}
v^{(i)}_{\iota\kappa} = -\beta^{(i)}\sum_{k=1}^2\babla\left(K^{(ik)}\ast\rho^{(k)}\right)_{\iota\kappa}= \beta^{(i)}\sum_{k=1}^2\sum_{\lambda=1}^{N}\left(K^{(ik)}_{\iota\lambda}-K^{(ik)}_{\kappa\lambda}\right)\rho_\lambda^{(k)}\mu_\lambda.
\eaq

\section{Dynamics, energy minimizers, and equilibria on finite graphs}\label{sec:dynamics_energy}
In the first part of this section, we study properties of the interaction kernels yielding aggregation and segregation of energy minimizers, respectively. Subsequently, we link the equilibria of the dynamics to critical points of the energy functional, \eqref{eq:Energy_general}. 

\subsection{Energy minimizers}

Our first result provides conditions on the interaction kernels that lead to aggregation or segregation, respectively, of minimizers of the energy functional \eqref{eq:Energy graph}. It is useful to introduce the notation 
\baq\label{eq_def:D^(ik)}
D^{(ik)}(x,y)\coloneqq K^{(ik)}(x,x)-K^{(ik)}(x,y).
\eaq
Note that when $K^{(ik)}$ is an attractive kernel, then it satisfies $D^{(ik)}< 0$ on $\RdRd$, while repulsive kernels $K^{(ik)}$ are such that $D^{(ik)}> 0$ on $\RdRd$.

\begin{theorem}[Conditions for energy optimality of aggregation]\label{thm:aggregation}
Let $\mu = \sum_{\iota=1}^N\mu_\iota\delta_{x_\iota}$ for $x_1,\ldots,x_N \in \Rd$ and let $\rhoup = (\rho^{(1)},\rho^{(2)})$ with $\rho^{(i)} = \sum_{\iota=1}^N \rho_\iota^{(i)}\mu_\iota\delta_{x_\iota}$. Then, the following holds:
\begin{enumerate}[label=\alph*)]
\item If the kernels 
satisfy for every $x,y\in\Rd$ with $x\ne y$
\baq\label{eq_def:attractive kernel thm}
D^{(11)}(x,x)<0, \; D^{(22)}(x,x)<0,
\eaq
and
\baq\label{eq_def:cross weaker than self thm}
D^{(11)}(x,y)D^{(22)}(x,y)> \left(D^{(12)}(x,y)\right)^2,
\eaq
then any energy-minimizing distribution is of the form $\rhoup_* = (\delta_{x_{\iota_1}},\delta_{x_{\iota_2}})$ for two indices $\iota_1, \iota_2 \in \{1,\ldots,N\}$. 
\item For either $i=1$ or $i=2$ and every $x,y\in\Rd$ with $x\ne y$, there holds
\baq\label{eq_def:attractive kernel thm2}
D^{(ii)}(x,y)<0
\eaq
and
\baq\label{eq:K^ii constant on diagonal thm}
K^{(ii)}(x,x) = K^{(ii)}(y,y),
\eaq
then any energy minimizer $\rhoup_*$ is such that species $\rho_*^{(i)}$ is aggregated, i.e., $\rho_*^{(i)}(x_{\iota_i})=\delta_{x_{\iota_i}}$ for an index $\iota_i \in \{1,\ldots,N\}$. 
\item If for both $i=1,2$ we have \eqref{eq_def:attractive kernel thm2} and \eqref{eq:K^ii constant on diagonal thm}, then again $\rhoup_* = (\delta_{x_{\iota_1}},\delta_{x_{\iota_2}})$ for $\iota_1, \iota_2 \in \{1,\ldots,N\}$. If in addition $K^{(12)}$ satisfies for every $x,y\in\Rd$ with $x\ne y$
\baq\label{eq_def:attractive cross-kernel 2}
D^{(12)}(x,y)<0,
\eaq
then $\iota_1=\iota_2$. In this case, any choice $\iota_1=\iota_2\in\{1,\ldots,N\}$ provides the minimal energy. Conversely, if for every $x,y\in\Rd$ with $x\ne y$ we have
\baqs
D^{(12)}(x,y)>0,
\eaqs
then $\iota_1\ne \iota_2$. In this case, there exist at least two energy minimizing distributions, the second being obtained by interchanging $\iota_1$ and $\iota_2$.
\end{enumerate}
\end{theorem}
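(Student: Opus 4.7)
The plan is to exploit the identity $K^{(ii)}(x,y)=K^{(ii)}(x,x)-D^{(ii)}(x,y)$ together with the probability constraint $\sum_\iota p^{(i)}_\iota=1$ (writing $p^{(i)}_\iota:=\rho^{(i)}_\iota\mu_\iota$) to absorb the diagonal values $K^{(ii)}(x_\iota,x_\iota)$ into the ``pure Dirac'' energies $\mathcal{E}(\delta_{x_\iota},\delta_{x_\kappa})=\tfrac12 K^{(11)}_{\iota\iota}+\tfrac12 K^{(22)}_{\kappa\kappa}+K^{(12)}_{\iota\kappa}$. This manipulation should yield the decomposition
\[
\mathcal{E}(\rhoup) = \sum_{\iota,\kappa=1}^N p^{(1)}_\iota p^{(2)}_\kappa\,\mathcal{E}(\delta_{x_\iota},\delta_{x_\kappa}) - \frac12\sum_{\iota,\kappa} D^{(11)}_{\iota\kappa}p^{(1)}_\iota p^{(1)}_\kappa - \frac12\sum_{\iota,\kappa} D^{(22)}_{\iota\kappa}p^{(2)}_\iota p^{(2)}_\kappa,
\]
which separates the two mechanisms at play: a convex combination of Dirac-pair energies, and two ``self-interaction defect'' sums supported off the diagonal (since $D^{(ii)}$ vanishes there).

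For part a), reading \eqref{eq_def:attractive kernel thm} as $D^{(ii)}(x,y)<0$ for $x\ne y$, each defect sum becomes a sum of nonpositive coefficients against nonnegative weights, hence nonpositive, with equality iff $p^{(i)}$ concentrates on a single index. The convex-combination term is bounded below by $\min_{\iota,\kappa}\mathcal{E}(\delta_{x_\iota},\delta_{x_\kappa})$, attained only when $(p^{(1)},p^{(2)})$ concentrates on a minimizing Dirac pair. Together these force $\rhoup_*=(\delta_{x_{\iota_1}},\delta_{x_{\iota_2}})$. The cross--self condition \eqref{eq_def:cross weaker than self thm} encodes negative-definiteness of $\bigl(\begin{smallmatrix}D^{(11)}&D^{(12)}\\D^{(12)}&D^{(22)}\end{smallmatrix}\bigr)$ at each off-diagonal pair; I expect it to enter by upgrading the argument to strict concavity of $\mathcal{E}$ on the product of simplices, which sharpens the information on the minimizing pair.

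For part b), I would fix $\rho^{(k)}_*$ with $k\ne i$ and apply the same reshuffling to the reduced functional $F_i(\rho^{(i)})=\int[\tfrac12 K^{(ii)}(x,x)+\Phi(x)]\,\dd\rho^{(i)}(x)-\tfrac12\iint D^{(ii)}\,\dd\rho^{(i)}\otimes\rho^{(i)}+\text{const}$, where $\Phi(x):=\int K^{(ik)}(x,y)\,\dd\rho^{(k)}_*(y)$. Under \eqref{eq:K^ii constant on diagonal thm} the bracket is affine in $\rho^{(i)}$, so its infimum over $\mathcal{P}_\mu(\Rd)$ is realized at a Dirac concentrated at $\argmin\Phi$; and \eqref{eq_def:attractive kernel thm2} makes the quadratic defect nonnegative, vanishing precisely at Diracs. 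The two minima coincide at the same Dirac, forcing $\rho^{(i)}_*=\delta_{x_{\iota_i}}$.

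For part c), applying b) to each species gives $\rhoup_*=(\delta_{x_{\iota_1}},\delta_{x_{\iota_2}})$, and the residual task is to minimize $\mathcal{E}(\delta_{x_{\iota_1}},\delta_{x_{\iota_2}})=\tfrac{c_1+c_2}{2}+K^{(12)}_{\iota_1\iota_2}$ over $(\iota_1,\iota_2)$, with $c_i$ the common value $K^{(ii)}(x,x)$. The two sign regimes for $D^{(12)}$ then decide whether $K^{(12)}_{\iota_1\iota_2}$ is minimized on or off the diagonal, yielding $\iota_1=\iota_2$ in the attractive case and $\iota_1\ne\iota_2$ in the repulsive case. The main obstacle I foresee is the final sentence asserting that \emph{every} choice $\iota_1=\iota_2$ realizes the minimum: literally this requires $K^{(12)}(x_\iota,x_\iota)$ to be constant in $\iota$, which is not among the stated hypotheses. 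I would handle this either by including an analogue of \eqref{eq:K^ii constant on diagonal thm} for $K^{(12)}$ on the diagonal, or by reinterpreting the claim as ensuring existence of a minimizer of the given form.
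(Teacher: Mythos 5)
Your proposal is correct, and for part a) it takes a genuinely different route from the paper. The paper argues locally: it fixes two vertices, shifts mass between them, and shows that the resulting two\mbox{-}point quadratic program \eqref{eq:2-point-minimization} is strictly concave precisely when \eqref{eq_def:attractive kernel thm} (read, as you do and as the paper's own proof confirms, as $D^{(ii)}(x,y)<0$ off the diagonal --- note $D^{(ii)}(x,x)=0$, so the displayed condition is vacuous as literally written) and \eqref{eq_def:cross weaker than self thm} hold, so that the optimal shift sits at a corner. Your global decomposition
\baqs
\mathcal{E}(\rhoup) = \sum_{\iota,\kappa=1}^N p^{(1)}_\iota p^{(2)}_\kappa\,\mathcal{E}(\delta_{x_\iota},\delta_{x_\kappa}) - \frac12\sum_{\iota\ne\kappa} D^{(11)}_{\iota\kappa}p^{(1)}_\iota p^{(1)}_\kappa - \frac12\sum_{\iota\ne\kappa} D^{(22)}_{\iota\kappa}p^{(2)}_\iota p^{(2)}_\kappa
\eaqs
is an identity (it follows from $K^{(ii)}_{\iota\kappa}=K^{(ii)}_{\iota\iota}-D^{(ii)}_{\iota\kappa}$ and $\sum_\kappa p^{(i)}_\kappa=1$; the cross term produces no defect), and it yields the conclusion of a) from the sign conditions $D^{(ii)}<0$ \emph{alone}: the two defect terms are nonnegative and vanish only at Diracs, while the convex combination is bounded below by the best Dirac pair. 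So your argument is shorter and proves a strictly stronger statement; what the paper's concavity route buys is the pairwise-localized perturbation analysis that is reused for the two\mbox{-}point stability discussion later, and the determinant condition \eqref{eq_def:cross weaker than self thm} there serves to make the reduced problem strictly concave rather than being needed for the conclusion. For b) and c) your argument is essentially the paper's: your reduced functional $F_i$ and the splitting into an affine part plus a nonnegative defect is exactly the inequality \eqref{eq:Energy graph 2}, with \eqref{eq:K^ii constant on diagonal thm} making the diagonal contribution constant. Finally, the gap you flag in c) is real and is present in the paper's proof as well: $D^{(12)}(x,y)<0$ for $x\ne y$ forces the minimizing Dirac pair onto the diagonal (since $\min_\lambda K^{(12)}_{\lambda\lambda}\le K^{(12)}_{\iota\iota}<K^{(12)}_{\iota\kappa}$), but the claim that \emph{every} $\iota_1=\iota_2$ attains the minimum needs $K^{(12)}(x,x)$ to be constant in $x$, which does not follow from the stated hypotheses; your proposed fixes (add a diagonal-constancy hypothesis for $K^{(12)}$, or weaken the claim to existence) are the right repair.
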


\begin{remark}\label{rem:K(x,y)=K(y-x)}
Note that \eqref{eq:K^ii constant on diagonal thm} holds in the important special case, where, for $i=1,2$, there exist suitable kernels $\tilde{K}^{(ii)}:\Rd\to\R$ such that
\baqs
K^{(ii)}(x,y)= \tilde{K}^{(ii)}(y-x),
\eaqs
 for any $x,y\in\Rd$.
\end{remark}
\begin{proof}[Proof of Theorem \ref{thm:aggregation}]
We prove each case separately.

\underline{Ad. 1.} We argue by constructing an appropriate competitor. To this end, we fix two indices $\iota_0, \kappa_0\in \{1,\ldots,N\}$ and define a shift $s =(s^{(1)},s^{(2)})^\top$, with $s^{(i)}$ satisfying $-\rho_{\kappa_0}^{(i)}\mu_{\kappa_0}\leq s^{(i)}\leq\rho_{\iota_0}^{(i)}\mu_{\iota_0}$. Then, we obtain a modified probability measure, $\tilde\rho$, by setting
\baqs
\tilde\rho^{(i)}(x_\iota)=\tilde\rho_\iota^{(i)}\mu_\iota =\begin{cases}
\rho_{\kappa_0}^{(i)}\mu_{\kappa_0}+s^{(i)}, &\iota = \kappa_0, \\
\rho_{\iota_0}^{(i)}\mu_{\iota_0}-s^{(i)}, &\iota = \iota_0, \\
\rho_\iota^{(i)}\mu_\iota, &\iota \notin \{\iota_0,\kappa_0\}.
\end{cases}
\eaqs
Then, using the notation $D^{(ik)}_{\iota\kappa}=D^{(ik)}(x_\iota,x_\kappa)$ and defining for $i,k=1,2$
\baq\label{eq_def:a_ik b_i}
a_{ik} &\coloneqq \frac{1}{2}\left(D^{(ik)}_{\kappa_0\iota_0}+D^{(ik)}_{\iota_0\kappa_0}\right),\quad b_{i} \coloneqq \frac{1}{2}\sum_{k=1}^2\sum_{\iota=1}^N\mu_\iota\rho^{(k)}_\iota\left(D^{(ik)}_{\iota\iota_0}-D^{(ik)}_{\iota\kappa_0}\right),
\eaq
we see that the difference of the energies  of $\tilde\rho$ and $\rho$ is given by
\baq\label{eq:shifted energy}
\mathcal{E}(\tilde\rhoup)-\mathcal{E}(\rhoup)&=\sum_{i=1}^2\left(b_is^{(i)}+\frac{1}{2}\sum_{k=1}^2a_{ik}s^{(i)}s^{(k)}\right).
\eaq
Thus, the optimal competitor is the solution to the  constrained quadratic optimization problem
\baq\label{eq:2-point-minimization}
\begin{cases}
\hfill\text{minimize}& f(s)=\frac{1}{2}s^\top A s + b^\top s\\
\hfill\text{s.th.}& -\rho_{\kappa_0}^{(1)}\mu_{\kappa_0}\leq s^{(1)}\leq\rho_{\iota_0}^{(1)}\mu_{\iota_0},\\
\hfill\text{and}& -\rho_{\kappa_0}^{(2)}\mu_{\kappa_0}\leq s^{(2)}\leq\rho_{\iota_0}^{(2)}\mu_{\iota_0},
\end{cases}
\eaq
where $A = (a_{ij})_{i,j=1,2}$ and $b=(b_1,b_2)^\top$ as in \eqref{eq_def:a_ik b_i}.  Since $s=(0,0)^\top$ is admissible, any minimizer of \eqref{eq:2-point-minimization} does not increase the energy.

Now, aggregation occurs whenever the minimizer lies on a corner point of the admissible rectangle. This happens when the problem is strictly concave, i.e.,  whenever $A$ is negative definite. This is the case if
\baq\label{eq:A neg def}
a_{11}+a_{22} < -\sqrt{(a_{11}-a_{22})^2+4a_{12}^2}. 
\eaq
Hence, $a_{ii}<0$ is required for both $i=1,2$ and additionally $a_{11}a_{22}> a_{12}^2$, which holds if and only if \eqref{eq_def:attractive kernel thm}, \eqref{eq_def:cross weaker than self thm} are satisfied.

\underline{Ad. 2.} Requiring \eqref{eq_def:attractive kernel thm2} in conjunction with \eqref{eq:K^ii constant on diagonal thm} for either $i=1$ or $i=2$, 
we obtain $\rho_*^{(i)} = \delta_{x_{\iota_i}}$. Indeed, due to the continuity of $K^{(ik)}$, equations  \eqref{eq:K^ii constant on diagonal thm} and \eqref{eq_def:attractive cross-kernel 2} are equivalent to
\baq\label{eq:self glob min bei (x,x)}
K^{(ii)}(x,x) < K^{(ii)}(y,z), \quad \text{ for all }x,y,z\in\Rd\text{ with }y\ne z.
\eaq
To see this, we fix $\rho$ as before and, without loss of generality, we assume $i=1$. Next, we choose 
\baq\label{eq:cross-minimizer}
    \iota_\ast\in \argmin_{\kappa\in\{1,\ldots,N\}}\left(\sum_{\iota=1}^N K^{(12)}_{\kappa\iota}\rho^{(2)}_\iota\right),
\eaq
which exists, but need not be unique. 
We define a competitor $\tilde \rho$ via $\tilde \rho^{(1)} = \delta_{\tilde \iota}$ and $\tilde \rho^{(2)} =  \rho^{(2)}$. Then, we have
\baq\label{eq:Energy graph 2}
\mathcal{E}(\rhoup) &=\frac{1}{2}\sum_{i,k=1}^2\sum_{\iota,\kappa=1}^N K^{(ik)}_{\iota\kappa}\rho_\iota^{(i)}\rho_\kappa^{(k)}\mu_\iota\mu_\kappa\\
&=\frac{1}{2}\sum_{\iota,\kappa=1}^N \left(K^{(11)}_{\iota\kappa}\rho_\iota^{(1)}\rho_\kappa^{(1)}+K^{(22)}_{\iota\kappa}\rho_\iota^{(2)}\rho_\kappa^{(2)}+2K^{(12)}_{\iota\kappa}\rho_\iota^{(1)}\rho_\kappa^{(2)}\right)\mu_\iota\mu_\kappa\\
&\geq \frac{1}{2}K^{(11)}_{{\tilde \iota}{ \tilde \iota}}+\sum_{\iota=1}^N K^{(12)}_{{\tilde\iota}\iota}\rho_\iota^{(2)}\mu_\iota+\frac{1}{2}\sum_{\iota,\kappa=1}^N K^{(22)}_{\iota\kappa}\rho_\iota^{(2)}\rho_\kappa^{(2)}\mu_\iota\mu_\kappa=\mathcal{E}(\tilde \rho).
\eaq
We achieve equality if and only if $\rho^{(1)}=\delta_\iota$ for some $\iota\in\{1,\ldots,N\}$, which also satisfies \eqref{eq:cross-minimizer}. Hence, any energy minimizer is such that species $1$ is aggregated. The same argument can be made for $i=2$, if $K^{(22)}$ satisfies \eqref{eq:self glob min bei (x,x)}. \\
\underline{Ad. 3.} If \eqref{eq:self glob min bei (x,x)} holds for both $i=1$ and $i=2$, then a calculation similar to \eqref{eq:Energy graph 2} shows that $ \rhoup_* = (\delta_{\iota_1},\delta_{\iota_2})$ 
\baqs
(\iota_1,\iota_2)\in \argmin_{(\iota,\kappa)\in\{1,\ldots,N\}\times\{1,\ldots,N\}} K^{(12)}_{\kappa\iota},
\eaqs
defines an energy-minimzing distribution. 
Finally, to distinguish between the cases $\iota_1 = \iota_2$, and $\iota_1 \ne \iota_2$, we substitute $\rhoup_* = (\delta_{x_{\iota_1}},\delta_{x_{\iota_2}})$ into the definition of $\mathcal{E}(\rhoup)$ to find
\baqs
\mathcal{E}(\rhoup_*) = \frac{1}{4}\left(K^{(11)}_{\iota_1\iota_1}+K^{(22)}_{\iota_2\iota_2}+2K^{(12)}_{\iota_1\iota_2}\right).
\eaqs
Due to \eqref{eq:K^ii constant on diagonal thm}, the first two terms are independent of the choice of $\iota_1$ and $\iota_2$. 
Thus, if $D^{(12)} < 0$, then \eqref{eq:self glob min bei (x,x)} holds and the last term is optimal for $\iota_1 = \iota_2$.
Conversely, when $D^{(12)} < 0$, then, nescessarily it must hold $\iota_1\neq \iota_2$ to achieve optimality in the last term.
\end{proof}

\begin{remark}[Energy optimality of segregation]\label{rem:seggregation_without_aggregation}
A sufficient condition yielding energy optimality of a fully segregated distribution without assuming aggregation of the two species individually, is
\baq\label{eq:condition segregated without aggregation}
\min_{\substack{1\leq \iota,\kappa \leq N\\ \kappa\ne \iota}} \left[K^{(12)}_{\iota\iota} - K^{(12)}_{\iota\kappa}\right] > \frac{1}{2}\sum_{i=1}^2\left(\max_{1\leq \iota,\kappa\leq N}K^{(ii)}_{\kappa\iota}-\min_{1\leq \iota,\kappa\leq N}K^{(ii)}_{\kappa\iota}\right).
\eaq
However, for this to hold for arbitrary graphs requires, using the continuity of the kernels, that the self-interaction kernels $K^{(ii)}$, $i=1,2$ have to be constants.
\end{remark}
\subsection{Dynamics and stationary states}
Let us proceed to studying properties of solutions to the system \eqref{eq:NL2CIE_general}. We are particularly interested in the link between energy minimizers and stationary states of the gradient system. To this end, recall that the velocity $v=(v^{(1)},v^{(2)})$ is defined as
\baqs 
v^{(i)}_{\iota\kappa} = -\beta^{(i)}\sum_{k=1}^2\babla\left(K^{(ik)}\ast\rho^{(k)}\right)_{\iota\kappa}= \beta^{(i)}\sum_{k=1}^2\sum_{\lambda=1}^{N}\left(K^{(ik)}_{\iota\lambda}-K^{(ik)}_{\kappa\lambda}\right)\rho_\lambda^{(k)}\mu_\lambda.
\eaqs
Therefore, by the symmetry of $\eta$, for arbitrary $f:\supp\mu\to \R^n$, $n\in\mathbb{N}$, and for $i=1,2$ we obtain
\baqs
\sum_{\iota=1}^N f_\iota\mu_\iota\frac{\dd}{\dd t}\rho_\iota^{(i)} &= \sum_{\iota,\kappa=1}^{N}f_\iota\Big(\mathfrak{m}^{(i)}_{\kappa\iota}\big[(v^{(i)}_{\iota\kappa})_-\big]^{q-1}-\mathfrak{m}^{(i)}_{\iota\kappa}\big[(v^{(i)}_{\iota\kappa})_+\big]^{q-1}\Big)\eta_{\iota\kappa}\mu_\kappa\mu_\iota\\
&= \sum_{\iota,\kappa=1}^{N}f_\iota\Big(\mathfrak{m}^{(i)}_{\kappa\iota}\big[(v^{(i)}_{\kappa\iota})_+\big]^{q-1}-\mathfrak{m}^{(i)}_{\iota\kappa}\big[(v^{(i)}_{\iota\kappa})_+\big]^{q-1}\Big)\eta_{\iota\kappa}\mu_\kappa\mu_\iota\\
&= \sum_{\iota,\kappa=1}^{N}(f_\kappa-f_\iota)\mathfrak{m}^{(i)}_{\iota\kappa}\big[(v^{(i)}_{\iota\kappa})_+\big]^{q-1}\eta_{\iota\kappa}\mu_\kappa\mu_\iota.
\eaqs
Choosing $f \equiv 1$, we have that $f_\kappa-f_\iota=0$ for all $\kappa,\iota\in\{1,\ldots,N\}$ and thus conservation of mass is guaranteed.

The second question is, whether the center of mass is preserved by the dynamics, which corresponds to the choice $f \equiv x$. The following result provides a counterexample.
\begin{example}[Nonconservation of the center of mass]
Consider the case of a single species with kernel $K$, linear mobility $m(r,s)=r$ and $p=q=2$. We choose $d=1$, $\mu = \delta_{-1}+\delta_{2}$, $\eta\equiv 1$, $\rho = \frac{2}{3}\delta_{-1}+\frac{1}{3}\delta_{2}$, such that the center of mass $x_c = \int_\R x\dd\rho(x)$ is initially located at $0$. Then, calculating its time derivative yields
\baqs
\frac{\dd}{\dd t} x_c&= 2\left(\frac{2}{3}\left(K(2,-1)-K(-1,-1)\right)+\frac{1}{3}\left(K(2,2)-K(-1,2)\right)\right)_-\\
&-\left(\frac{2}{3}\left(K(-1,-1)-K(2,-1)\right)+\frac{1}{3}\left(K(-1,2)-K(2,2)\right)\right)_-,
\eaqs
which vanishes if and only if
\baqs
\frac{2}{3}\left(K(2,-1)-K(-1,-1)\right)+\frac{1}{3}\left(K(2,2)-K(-1,2)\right) = 0.
\eaqs
Since we can choose these terms independently without violating \eqref{K2} or even \eqref{K3}, we find that, in general, $x_c$ is not conserved in time. Indeed, this statement is true even for a single species. The reason the preservation of the center of mass breaks down is the lack of symmetry of the upwind scheme in conjunction with the choice of the underlying graph as well as the structure of the kernel $K$.

However, supposing $K(x,x)=K(y,y)$, for all $x,y\in\Rd$, and upon replacing the upwind relation $j(x,y)= \rho(x)v_+(x,y)-\rho(y)v_-(x,y)$ by a symmetric relation of the form $j(x,y)= \theta(\rho(x),\rho(y)v(x,y)$, with $\theta(x,y)=\theta(y,x)$, the center of mass is conserved in time. The same is true for volume schemes without upwind \cite[Lemma 4.3]{CJLV2016}.

On the other hand, center of mass conservation results are also valid in the particular case of finite volume upwind schemes, i.e. when symmetry is provided by the graph (see, e.g., \cite[Lemma 3.2]{DLV2019}).
\end{example}

Another important choice is $f_\iota =\sum_{k=1}^2\sum_{\lambda=1}^{N}K^{(ik)}_{\iota\lambda}\rho_\lambda^{(k)}\mu_\lambda$, which, if we sum over $i=1,2$, yields the dissipation of the energy $\mathcal{E}(\rhoup)$. We have
\baq\label{eq:energy time derivative}
\frac{\dd}{\dd t}\mathcal{E}(\rhoup)&=\sum_{i,k=1}^2\sum_{\iota,\lambda=1}^N\left(K^{(ik)}_{\iota\lambda}\rho_\lambda^{(k)}\mu_\lambda\right)\mu_\iota\frac{\dd}{\dd t}\rho_\iota^{(i)}\\
&= \sum_{i=1}^2\sum_{\iota,\kappa=1}^{N}\left(\sum_{k=1}^2\sum_{\lambda=1}^{N}\left(K^{(ik)}_{\kappa\lambda}-K^{(ik)}_{\iota\lambda}\right)\rho_\lambda^{(k)}\mu_\lambda\right)\mathfrak{m}^{(i)}_{\iota\kappa}\big[(v^{(i)}_{\iota\kappa})_+\big]^{q-1}\eta_{\iota\kappa}\mu_\kappa\mu_\iota\\
&= -\sum_{i=1}^2\sum_{\iota,\kappa=1}^{N}\frac{1}{\beta^{(i)}}\mathfrak{m}^{(i)}_{\iota\kappa}\big[(v^{(i)}_{\iota\kappa})_+\big]^{q}\eta_{\iota\kappa}\mu_\kappa\mu_\iota \le 0.
\eaq
We expect the (nonpositive) dissipation to vanish at solutions to the stationary equation
\baq\label{eq:rho_nu^(i) stationary}
    0=\sum_{\kappa=1}^{N}\Big(\mathfrak{m}^{(i)}_{\kappa\iota}\big[(v^{(i)}_{\iota\kappa})_-\big]^{q-1}-\mathfrak{m}^{(i)}_{\iota\kappa}\big[(v^{(i)}_{\iota\kappa})_+\big]^{q-1}\Big)\eta_{\iota\kappa}\mu_\kappa,\quad \iota=1,\ldots,N,
\eaq
which, indeed, is equivalent to
\baq\label{eq:rho_nu^(i) time derivative vanishing condition}
    \sum_{\kappa=1}^{N}\mathfrak{m}^{(i)}_{\iota\kappa}\big[(v^{(i)}_{\iota\kappa})_+\big]^{q-1}\eta_{\iota\kappa}\mu_\kappa=\sum_{\kappa=1}^{N}\mathfrak{m}^{(i)}_{\kappa\iota}\big[(v^{(i)}_{\iota\kappa})_-\big]^{q-1}\eta_{\iota\kappa}\mu_\kappa,\quad \iota=1,\ldots,N.
\eaq
From its definition \eqref{eq_def:v^(i)}, we see that the velocity $v^{(i)}(x,y)$ is antisymmetric with respect to its two arguments $x,y\in\Rd$ and also additive, i.e., for $x,y,z\in\Rd$ we have
\baqs
v^{(i)}(x,z) = v^{(i)}(x,y)+v^{(i)}(y,z).
\eaqs
This observation allows for the following characterization:
\begin{lemma}\label{lem:stationary state rho_nu^(i) disintegration}
Let $\rhoup$ and $m$ be as before. Then, \eqref{eq:rho_nu^(i) time derivative vanishing condition} holds for all $\iota=1,\ldots,N$ if and only if for all $\iota,\kappa=1,\ldots,N$ with $\eta_{\iota\kappa}>0$ and both $i=1,2$, we have
\baq\label{eq:stat_state_disintegration}
    \mathfrak{m}^{(i)}_{\iota\kappa}(v^{(i)}_{\iota\kappa})_+=0.
\eaq
\end{lemma}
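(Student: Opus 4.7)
The plan is to prove the equivalence in two directions, with the forward implication being the main content and requiring a well-chosen test function.

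For the ``if'' direction, suppose $\mathfrak{m}^{(i)}_{\iota\kappa}(v^{(i)}_{\iota\kappa})_+ = 0$ for all $\iota,\kappa$ with $\eta_{\iota\kappa} > 0$ and both $i=1,2$. Since $q>1$, the factor $[(v^{(i)}_{\iota\kappa})_+]^{q-1}$ vanishes whenever $(v^{(i)}_{\iota\kappa})_+ = 0$, so every term on the left-hand side of \eqref{eq:rho_nu^(i) time derivative vanishing condition} is zero. For the right-hand side, I exploit the antisymmetry of $v^{(i)}$, namely $(v^{(i)}_{\iota\kappa})_- = (v^{(i)}_{\kappa\iota})_+$, together with the symmetry $\eta_{\iota\kappa}=\eta_{\kappa\iota}$, to rewrite each summand as $\mathfrak{m}^{(i)}_{\kappa\iota}[(v^{(i)}_{\kappa\iota})_+]^{q-1}\eta_{\kappa\iota}\mu_\kappa$; after relabeling $(\iota,\kappa)\mapsto(\kappa,\iota)$, each such summand vanishes by hypothesis.

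For the ``only if'' direction, assume \eqref{eq:rho_nu^(i) time derivative vanishing condition} for every $\iota$; by \eqref{eq:NL2CIE} this is equivalent to $\frac{\dd}{\dd t}\rho^{(i)}_\iota = 0$ for all $\iota$. I then apply the general summation-by-parts identity derived just above the lemma,
\baqs
\sum_{\iota=1}^N f_\iota \mu_\iota \frac{\dd}{\dd t}\rho^{(i)}_\iota = \sum_{\iota,\kappa=1}^N (f_\kappa - f_\iota)\,\mathfrak{m}^{(i)}_{\iota\kappa}\big[(v^{(i)}_{\iota\kappa})_+\big]^{q-1}\eta_{\iota\kappa}\mu_\kappa \mu_\iota,
\eaqs
with the test function $f_\iota := \sum_{k=1}^2 \sum_{\lambda=1}^N K^{(ik)}_{\iota\lambda}\rho^{(k)}_\lambda \mu_\lambda$, so that by \eqref{eq_def:v^(i)} we have $f_\kappa - f_\iota = -v^{(i)}_{\iota\kappa}/\beta^{(i)}$. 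The left-hand side vanishes by stationarity. On the right-hand side I use the pointwise identity $v\cdot(v)_+^{q-1} = (v)_+^{q}$, valid because $(v)_+ = 0$ whenever $v\le 0$ and $q-1>0$. This collapses the equality to
\baqs
0 = -\frac{1}{\beta^{(i)}} \sum_{\iota,\kappa=1}^N \mathfrak{m}^{(i)}_{\iota\kappa}\big[(v^{(i)}_{\iota\kappa})_+\big]^{q}\eta_{\iota\kappa}\mu_\kappa \mu_\iota.
\eaqs
Every summand is nonnegative ($\mathfrak{m}^{(i)}\ge 0$, $\eta\ge 0$, $\mu_\iota,\mu_\kappa,\beta^{(i)}>0$), so each must be zero. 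Hence whenever $\eta_{\iota\kappa}>0$ we obtain $\mathfrak{m}^{(i)}_{\iota\kappa}[(v^{(i)}_{\iota\kappa})_+]^{q} = 0$, and since both factors are nonnegative the product $\mathfrak{m}^{(i)}_{\iota\kappa}(v^{(i)}_{\iota\kappa})_+$ vanishes as claimed.

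The only genuine obstacle is spotting the correct test function: once $f$ is chosen so that its discrete gradient reproduces $-v^{(i)}/\beta^{(i)}$, the computation reduces to the single-species analogue of the energy dissipation identity \eqref{eq:energy time derivative}, and the rest is the standard argument that a finite sum of nonnegative terms equal to zero forces each term to vanish.
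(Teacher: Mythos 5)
Your proof is correct, but the substantive direction is argued by a genuinely different route than the paper's. For the implication \eqref{eq:rho_nu^(i) time derivative vanishing condition} $\Rightarrow$ \eqref{eq:stat_state_disintegration}, the paper argues by contradiction: a single pair with $\eta_{\lambda_1\lambda_0}>0$ and $\mathfrak{m}^{(i)}_{\lambda_1\lambda_0}(v^{(i)}_{\lambda_1\lambda_0})_+>0$ forces, via the flux balance at $\lambda_1$, an index $\lambda_2$ with $\mathfrak{m}^{(i)}_{\lambda_2\lambda_1}(v^{(i)}_{\lambda_2\lambda_1})_+>0$, and iterating this together with the additivity $v^{(i)}(x,z)=v^{(i)}(x,y)+v^{(i)}(y,z)$ yields $v^{(i)}_{\lambda_l\lambda_n}>0$ for $l>n$, hence infinitely many pairwise distinct vertices --- impossible on a finite graph. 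You instead test the stationarity against the potential $f_\iota=\sum_{k}\sum_{\lambda}K^{(ik)}_{\iota\lambda}\rho^{(k)}_\lambda\mu_\lambda$, whose nonlocal gradient is $-v^{(i)}/\beta^{(i)}$, so that the summation-by-parts identity collapses to the vanishing of the per-species dissipation $\frac{1}{\beta^{(i)}}\sum_{\iota,\kappa}\mathfrak{m}^{(i)}_{\iota\kappa}\big[(v^{(i)}_{\iota\kappa})_+\big]^{q}\eta_{\iota\kappa}\mu_\iota\mu_\kappa$, a sum of nonnegative terms that must then vanish termwise; this is precisely the single-species instance of \eqref{eq:energy time derivative}. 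Both arguments are sound. Yours is shorter and makes the link between stationarity and zero dissipation explicit (so the remark following the lemma becomes immediate), but it uses that $v^{(i)}$ is the nonlocal gradient of a potential and that $\beta^{(i)}>0$; the paper's argument needs only antisymmetry, additivity, and finiteness of the vertex set. One cosmetic point: in the easy direction, the hypothesis $\mathfrak{m}^{(i)}_{\iota\kappa}(v^{(i)}_{\iota\kappa})_+=0$ also allows $\mathfrak{m}^{(i)}_{\iota\kappa}=0$ with $(v^{(i)}_{\iota\kappa})_+>0$; that case is equally trivial since the factor $\mathfrak{m}^{(i)}_{\iota\kappa}$ itself kills the summand, but it is worth stating alongside the case you do treat.
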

\begin{proof}
By the antisymmetry of $v^{(i)}$, the implication \eqref{eq:stat_state_disintegration} $\Rightarrow$ \eqref{eq:rho_nu^(i) time derivative vanishing condition} is immediate. 

For the other implication, first observe that $q>1$, implies that the map $s\mapsto s^{q-1}$ is bijective on $[0,\infty)$. Also, all summands in both sums of \eqref{eq:rho_nu^(i) time derivative vanishing condition} are nonnegative, i.e., each sum  vanishes if and only if all of its summands vanish. We now assume \eqref{eq:rho_nu^(i) time derivative vanishing condition} and argue by contradiction. To this end, we additionally assume that there exist two indices $\lambda_0,\lambda_1\in\{1,\ldots,N\}$ s.th. $\mathfrak{m}^{(i)}_{\lambda_1\lambda_0}(v^{(i)}_{\lambda_1\lambda_0})_+>0$. As all summands in the sum are nonnegative, this implies $\sum_{\kappa=1}^{N}\mathfrak{m}^{(i)}_{\lambda_1\kappa}\big[(v^{(i)}_{\lambda_1\kappa})_+\big]^{q-1}\eta_{\lambda_1\kappa}\mu_\kappa>0$. Hence, by \eqref{eq:rho_nu^(i) time derivative vanishing condition}, there exists at least one index $\lambda_2\in\{1,\ldots,N\}$, such that $\mathfrak{m}^{(i)}_{\lambda_2\lambda_1}(v^{(i)}_{\lambda_1\lambda_2})_->0$. Therefore, by the antisymmetry of $v^{(i)}$, we have $\mathfrak{m}^{(i)}_{\lambda_2\lambda_1}(v^{(i)}_{\lambda_2\lambda_1})_+>0$. We can repeat these arguments to obtain a sequence of indices $(\lambda_n)_{n\in\mathbb{N}\cup\{0\}}$. However, since $v^{(i)}$ is additive, we have that for $l,n\in \mathbb{N}_0$, $l>n$ that $v^{(i)}_{\lambda_l\lambda_n}=v^{(i)}_{\lambda_l\lambda_{l-1}}+\ldots+v^{(i)}_{\lambda_{n+1}\lambda_n}>0$. This means that all the indices $\lambda_n$ are different from each other, which contradicts the finiteness of the graph.
\end{proof}
\begin{remark}
Recalling \eqref{eq:energy time derivative} and that $q>1$, we see that Lemma \ref{lem:stationary state rho_nu^(i) disintegration} implies 
\baqs
\frac{\dd}{\dd t}\mathcal{E}(\rhoup)=0 \iff \frac{\dd}{\dd t}\rhoup=0,
\eaqs
i.e., not only does the energy remain constant in time when the dynamics reach a stationary state, but the converse is true, too.
\end{remark}

\section{Mixing, segregation and pattern formation on graphs of variable sizes \& numerical illustrations}\label{sec:BehaviorAndIllustrations}

This section is dedicated to presenting analytical and numerical results illustrating the various phenomena observed for different choices of graphs and interaction kernels. Moreover, we explore the impact of the model's parameters on the dynamical and energetic properties.

We begin by investigating mixing and segregation on small graphs (i.e.,  containing only two, three and four vertices). We proceed with a numerical study of the formation of patterns for attractive-repulsive kernels and explore the influence of the choice of mobility and exponent $p\neq 2$. 

\subsection{Dynamics on a two-point space}\label{Stationary states on a two-point space for kernels with constant diagonals}

To gain a better understanding of the nature of the gradient flow, we analyze the dynamics on a two-point space, $\supp\mu = \{x_1,x_2\}$, which shall act as a starting point for simulations, below. In addition, we assume that for $i,k=1,2$ and all $x,y\in\Rd$ we have 
\baq\label{eq:K^ik const diag}
    K^{(ik)}(x,x)=K^{(ik)}(y,y),
\eaq
and that for $\iota,\kappa=1,2$ the mobility satisfies
\baq\label{eq:m=0_iff_r=0}
    \mathfrak{m}^{(i)}_{\iota\kappa} = 0 \iff \rho^{(i)}_\iota = 0.
\eaq

The starting point of our considerations is an explicit formulation of all possible stationary states assuming $\eta_{\iota\kappa}>0$ (as otherwise, there is no interaction between the vertices). We recall the definition \eqref{eq_def:D^(ik)} of the differences $D^{(ik)}(x,y)$, which reads on the graph as
\baqs
    D^{(ik)}_{\iota\kappa} \coloneqq K^{(ik)}_{\iota\iota}-K^{(ik)}_{\iota\kappa}.
\eaqs
By \eqref{eq:m=0_iff_r=0} and the symmetry \eqref{K2}, we have
\baqs
D^{(ik)}_{12}= K^{(ik)}_{11}-K^{(ik)}_{12}=K^{(ik)}_{22}-K^{(ik)}_{21}=D^{(ik)}_{21}.
\eaqs 
This motivates the notation $D^{(ik)}\coloneqq D^{(ik)}_{12} = D^{(ik)}_{21}$ that is used for the remainder of this subsection and in  Appendix \ref{appendix}.

\noindent
\textbf{Decoupled case.}
Let us first consider the case $D^{(12)}=0$, in which case the equations for the two species are decoupled. Then, it suffices to consider only one species $i\in\{1,2\}$. By Lemma \ref{lem:stationary state rho_nu^(i) disintegration} and \eqref{eq:m=0_iff_r=0} there are two possible ways of obtaining a stationary state:
\begin{enumerate}[label=\alph*)]
    \item\label{case decoupled steady not agg} $v^{(i)}_{\iota\kappa} = 0$,
    \item\label{case decoupled steady agg} $v^{(i)}_{\iota\kappa} > 0$,  $\rho^{(i)}_\iota=0$,
\end{enumerate}
where $\iota,\kappa\in\{1,2\}$ and $\iota\ne \kappa$. The corresponding stationary states are characterized explicitly in Proposition \ref{prop:2-point-energy_decoupled}.
\begin{proposition}[Two-point stationary states - decoupled case]\label{prop:2-point-energy_decoupled}
Let $N=2$, let \eqref{eq:K^ik const diag} hold, and let $D^{(12)}= 0$. Then, for $\iota,\kappa\in\{1,2\}$ with $\iota\ne\kappa$, we have
\begin{enumerate}[label=\alph*)]
\item If $D^{(ii)}=0$, then $v^{(i)}_{\iota\kappa} = 0$ holds for every $\rho^{(i)}(x_1)=1-\rho^{(i)}(x_2)\in[0,1]$.\\
Conversely, if $D^{(ii)}\ne 0$, then $v^{(i)}_{\iota\kappa} = 0$ is achieved only at $\rho^{(i)}(x_1)=\rho^{(i)}(x_2)=\frac{1}{2}$.
\item $D^{(ii)}<0$ is equivalent to being able to satisfy  $v^{(i)}_{\iota\kappa} > 0$ and $\rho^{(i)}(x_\iota)=0$ simultaneously for either $\iota=1$ or $\iota=2$.
\end{enumerate}
\end{proposition}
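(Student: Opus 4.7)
My plan is to begin by writing out the velocity $v^{(i)}_{12}$ explicitly on the two-point graph under the standing hypotheses, which reduces the proof to analyzing a single scalar equation. From the formula \eqref{eq_def:v^(i)}, expanding the inner sum for $\iota=1,\,\kappa=2$ gives
\baqs
v^{(i)}_{12} = \beta^{(i)}\sum_{k=1}^2\bigl[(K^{(ik)}_{11}-K^{(ik)}_{21})\rho_1^{(k)}\mu_1 + (K^{(ik)}_{12}-K^{(ik)}_{22})\rho_2^{(k)}\mu_2\bigr].
\eaqs
The symmetry \eqref{K2} together with the constant-diagonal assumption \eqref{eq:K^ik const diag} collapses the bracketed differences to $\pm D^{(ik)}$, and the decoupling assumption $D^{(12)}=0$ eliminates the $k\ne i$ term. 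One arrives at
\baqs
v^{(i)}_{12} = \beta^{(i)}\,D^{(ii)}\bigl(\rho_1^{(i)}\mu_1-\rho_2^{(i)}\mu_2\bigr),
\eaqs
and of course $v^{(i)}_{21}=-v^{(i)}_{12}$ by antisymmetry.

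For part (a), Lemma \ref{lem:stationary state rho_nu^(i) disintegration} and the assumption \eqref{eq:m=0_iff_r=0} reduce the stationarity condition (in the interior of the simplex) to $v^{(i)}_{12}=0$, which by the displayed formula above is the disjunction $D^{(ii)}=0$ or $\rho_1^{(i)}\mu_1=\rho_2^{(i)}\mu_2$. Using the mass constraint $\rho_1^{(i)}\mu_1+\rho_2^{(i)}\mu_2=1$ and the convention $\rho^{(i)}(x_\iota)=\rho^{(i)}_\iota\mu_\iota$, the latter equality is exactly $\rho^{(i)}(x_1)=\rho^{(i)}(x_2)=\tfrac{1}{2}$. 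Separating the two subcases $D^{(ii)}=0$ and $D^{(ii)}\ne 0$ then yields the two statements in (a).

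For part (b) I would substitute the boundary configurations into the same scalar formula. If $\rho^{(i)}(x_1)=0$, then $\rho_1^{(i)}\mu_1=0$ and $\rho_2^{(i)}\mu_2=1$, giving $v^{(i)}_{12} = -\beta^{(i)}D^{(ii)}$, so the inequality $v^{(i)}_{12}>0$ is equivalent to $D^{(ii)}<0$ (recall $\beta^{(i)}>0$). The symmetric case $\rho^{(i)}(x_2)=0$ with the flipped indices $\iota=2,\,\kappa=1$ gives $v^{(i)}_{21}=-v^{(i)}_{12}=-\beta^{(i)}D^{(ii)}$ and hence the same condition $D^{(ii)}<0$. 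This establishes the ``$\Rightarrow$'' direction; for ``$\Leftarrow$'', given $D^{(ii)}<0$ one simply exhibits one of these two Dirac configurations, which by the same computation satisfies $v^{(i)}_{\iota\kappa}>0$ and $\rho^{(i)}(x_\iota)=0$.

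No step is genuinely hard here; the only thing to be careful about is not mistakenly claiming that $v^{(i)}_{\iota\kappa}=0$ characterizes \emph{all} stationary states on the boundary, since Lemma \ref{lem:stationary state rho_nu^(i) disintegration} allows the alternative $\mathfrak{m}^{(i)}_{\iota\kappa}=0$, which under \eqref{eq:m=0_iff_r=0} corresponds precisely to the Dirac configurations treated in (b). Keeping the two routes to stationarity separate, as the statement itself does, makes the bookkeeping transparent.
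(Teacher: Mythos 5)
Your proposal is correct and follows essentially the same route as the paper: both reduce the velocity to the scalar form $v^{(i)}_{\iota\kappa}=\beta^{(i)}D^{(ii)}\bigl(\rho^{(i)}(x_\iota)-\rho^{(i)}(x_\kappa)\bigr)$ using \eqref{K2}, \eqref{eq:K^ik const diag} and $D^{(12)}=0$, and then read off parts (a) and (b) by inspection of signs. Your evaluation at $\rho^{(i)}(x_\iota)=0$ in part (b) is in fact slightly cleaner than the paper's, which substitutes $\rho^{(i)}(x_\iota)=1$ and implicitly swaps the roles of $\iota$ and $\kappa$.
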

The proof of this proposition and the remaining results in this subsection is straightforward, yet technical, and can be found in the appendix \ref{appendix}.

Having derived stationary states and conditions for their existence, next we describe their stability.
\begin{proposition}[Dynamics - Decoupled case]\label{prop:2-point-dynamics_decoupled}
Let $N=2$, let \eqref{eq:K^ik const diag} and \eqref{eq:m=0_iff_r=0} hold, and let $D^{(12)}= 0$. Let $\rho^{(i)}_0(x_\iota) = 1-\rho^{(i)}_0(x_\kappa) = \frac{1}{2}+\varepsilon$ with $\varepsilon\in[0,1/2]$ and $\iota, \kappa \in \{1,2\}$, $\iota\ne\kappa$. Then, it holds
\begin{enumerate}[label=\arabic*.]
    \item If $D^{(ii)}>0$, then $\rho^{(i)}_t(x_\iota)\to 1/2$ as $t\to\infty$ for every $\varepsilon\in[0,1/2]$.
    \item If $D^{(ii)}<0$, then
    \begin{itemize}
        \item If $\varepsilon=0$, then $\rho^{(i)}_t(x_\iota) = \rho^{(i)}_t(x_\kappa) = 1/2$, for all $t\ge 0$.
        \item If $\varepsilon\in(0,1/2)$, then $\rho^{(i)}_t(x_\iota) \to 1$ and $\rho^{(i)}_t(x_\kappa) \to 0$, as $t\to\infty$.
    \end{itemize}
\end{enumerate}
\end{proposition}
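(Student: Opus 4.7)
The plan is to use the decoupling $D^{(12)}=0$ to reduce the problem to a scalar ODE for a single species, then run a monotonicity-and-contradiction argument on the resulting equation. Throughout I write $m_\iota(t)\coloneqq\rho_t^{(i)}(x_\iota)$; by mass conservation $m_1+m_2=1$, so one unknown suffices. Without loss of generality I take $\iota=1$, $\kappa=2$.

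The first step is to compute the velocity $v^{(i)}_{12}$ explicitly. The hypothesis $D^{(12)}=0$ together with \eqref{eq:K^ik const diag} forces $K^{(12)}$ to be constant on $\{x_1,x_2\}^2$, so its contribution to $v^{(i)}_{12}$ vanishes; combining \eqref{K2} and \eqref{eq:K^ik const diag} in the remaining self-interaction term in \eqref{eq_def:v^(i)} yields
\[
v^{(i)}_{12} \;=\; \beta^{(i)} D^{(ii)} (m_1 - m_2) \;=\; \beta^{(i)} D^{(ii)} (2 m_1 - 1).
\]
Substituting into \eqref{eq:NL2CIE} for $\iota=1$ (only $\kappa=2$ contributes) reduces the problem to a scalar autonomous ODE for $m_1$, in which the sign of the right-hand side is governed by $\sgn(D^{(ii)})\cdot\sgn(m_1-1/2)$.

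For the repulsive case $D^{(ii)}>0$: if $\varepsilon=0$, then $v^{(i)}_{12}\equiv 0$ and the right-hand side vanishes identically, so $m_1(t)\equiv 1/2$; if $\varepsilon>0$, then $v^{(i)}_{12}>0$ whenever $m_1>1/2$, so $\dot m_1\le 0$ and $t\mapsto m_1(t)$ is monotone non-increasing, bounded below by $1/2$, hence converges to some $m_1^\infty\ge 1/2$. To rule out $m_1^\infty>1/2$, observe that in this case both $m_1(t), m_2(t)$ stay in the compact subinterval $[1-m_1^\infty,m_1^\infty]\subset(0,1)$, so the densities $\rho^{(i)}_\iota$ are uniformly bounded away from $0$; by \eqref{eq:m=0_iff_r=0} and continuity of $m$ on the relevant compact set, the mobility $\mathfrak{m}^{(i)}_{12}(t)$ is then uniformly bounded below by some $M>0$, while $v^{(i)}_{12}(t)\ge\beta^{(i)}D^{(ii)}(2m_1^\infty-1)>0$. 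This forces $\dot m_1\le -c<0$ for all large $t$, contradicting convergence; hence $m_1^\infty=1/2$.

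The attractive case $D^{(ii)}<0$ is symmetric: the velocity has the opposite sign, so for $\varepsilon=0$ the solution again stays at $m_1=1/2$, while for $\varepsilon\in(0,1/2)$ one has $\dot m_1>0$ as long as $m_1\in(1/2,1)$ and the relevant mobility is non-zero. Thus $m_1$ is monotone increasing and bounded above by $1$; the analogous contradiction argument (if $m_1^\infty<1$, then $m_2(t)$ stays bounded below by $1-m_1^\infty>0$, so \eqref{eq:m=0_iff_r=0} and continuity of $m$ give a uniform lower bound for $\mathfrak{m}^{(i)}_{21}$, and $|v^{(i)}_{12}|$ is bounded below as well) rules out any limit $m_1^\infty<1$, yielding $m_1^\infty=1$ and hence $m_2^\infty=0$. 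The main and only technical point is this monotonicity-plus-contradiction step: the mobility can degenerate at the corners $\rho=0$, but provided the hypothetical limit keeps both masses away from the relevant corner, \eqref{eq:m=0_iff_r=0} combined with continuity of $m$ delivers the required uniform lower bound on $\mathfrak{m}^{(i)}$. The exponent $q-1>0$ enters only as a monotone power on $[0,\infty)$ and requires no additional care.
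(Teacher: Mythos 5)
Your proof is correct and takes essentially the same route as the paper's: both reduce to the sign analysis of $v^{(i)}_{\iota\kappa}=\beta^{(i)}D^{(ii)}\big(2\rho^{(i)}(x_\iota)-1\big)$ (the paper's identity \eqref{eq:v^(i) simplified two-point decoupled}) and then exploit the resulting monotonicity of $t\mapsto\rho^{(i)}_t(x_\iota)$. You are in fact somewhat more complete than the paper, which asserts attractivity of the relevant stationary states directly from the sign of the flux, whereas you justify convergence via the monotone limit combined with uniform lower bounds on the mobility and the velocity; the only blemish is the stated trapping interval $[1-m_1^\infty,m_1^\infty]$ (for $m_1$ it should be $[m_1^\infty,m_1(0)]$), which does not affect the argument since all you actually use is that the first argument of the relevant mobility stays bounded away from zero, so that \eqref{eq:m=0_iff_r=0} and continuity of $m$ apply.
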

\begin{remark}\label{rem:two-point_decoupled}
Proposition \ref{prop:2-point-energy_decoupled} shows that, in the decoupled case, species $i$  aggregates on a single vertex if and only if $D^{(ii)} \le 0$. By Proposition \ref{prop:2-point-dynamics_decoupled} we see that these aggregated states are asymptotically stable, whenever $D^{(ii)} > 0$. In particular, this holds for attractive potentials $K^{(ii)}$. Conversely, states in which the mass of species $i$ is distributed equally on the two vertices are always a stationary. However, this type of stationary state is only stable, when $D^{(ii)} < 0$, which is in particular true for repulsive potentials $K^{(ii)}$.
\end{remark}

\noindent
\textbf{Coupled case.}
Next, let us assume $D^{(12)}\ne0$, i.e., there is a coupling. Then, by Lemma \ref{lem:stationary state rho_nu^(i) disintegration}, there are four different conditions leading to stationary states:
\begin{enumerate}[label=\alph*)]
\item\label{case steady 0 agg} $v^{(1)}_{\iota\kappa} = v^{(2)}_{\iota\kappa}=0$,
\item\label{case steady 1 agg} $v^{(i)}_{\iota\kappa} > 0$ and $\rho^{(i)}_\iota=0$ as well as $v^{(k)}_{\iota\kappa}= 0$,
\item\label{case steady 2 agg 1 node} $v^{(1)}_{\iota\kappa}>0$ and $v^{(2)}_{\iota\kappa}> 0$  as well as $\rho^{(1)}_\iota=\rho^{(2)}_\iota=0$,
\item\label{case steady 2 agg 2 nodes} $v^{(1)}_{\iota\kappa}> 0$ and $v^{(2)}_{\iota\kappa}< 0$  as well as $\rho^{(1)}_\iota=0$ and $\rho^{(2)}_\iota=1$,
\end{enumerate}
for $i,k,\iota,\kappa\in\{1,2\}$ with $i\ne k$ and $\iota\ne \kappa$. In the simplified setting, these stationary states can also be characterized explicitly as shown in the following proposition.
\begin{proposition}[Stationary states - Coupled case]\label{prop:2-point-energy_coupled}
Let $N=2$, let \eqref{eq:K^ik const diag} and \eqref{eq:m=0_iff_r=0} hold, and let $D^{(12)}\ne 0$. Then, for $\iota,\kappa\in\{1,2\}$ with $\iota\ne\kappa$, we have
\begin{enumerate}[label=\alph*)]
\item A probability measure satisfying \ref{case steady 0 agg} exists. \\ If $D^{(11)}D^{(22)}\ne\left(D^{(12)}\right)^2$, then it is unique and given by
\baqs
\rho_\mathrm{a}^{(1)}(x_1) &= \rho_\mathrm{a}^{(1)}(x_2) = \frac{1}{2},\;\text{ and } \;\rho_\mathrm{a}^{(2)}(x_1) = \rho_\mathrm{a}^{(2)}(x_2) = \frac{1}{2}.
\eaqs
If $D^{(11)}D^{(22)}=\left(D^{(12)}\right)^2$, then the probability measures
\baq\label{eq:rho_a,r}
\rho_{\mathrm{a}_r}^{(1)}(x_1) &= \frac{1}{2}(1+r), &&\rho_{\mathrm{a}_r}^{(1)}(x_2) = \frac{1}{2}(1-r),\\
\rho_{\mathrm{a}_r}^{(2)}(x_1) &= \frac{1}{2}\left(1-\frac{D^{(12)}}{D^{(22)}}r\right), &&\rho_{\mathrm{a}_r}^{(2)}(x_2) = \frac{1}{2}\left(1+\frac{D^{(12)}}{D^{(22)}}r\right),
\eaq
where $r\in\left[-1\lor -\abs{\frac{D^{(22)}}{D^{(12)}}},1\land \abs{\frac{D^{(22)}}{D^{(12)}}}\right]$ all satisfy \ref{case steady 0 agg}.
\item A probability measure satisfying \ref{case steady 1 agg} exists if and only if for one $i,k \in\{1,2\}$, $k\neq i$ we have
\baq\label{eq:1 agg existenz 1}
\abs{D^{(12)}}\le \abs{D^{(kk)}},
\eaq
and
\baq\label{eq:1 agg existenz 2}
D^{(ii)}< \frac{\left(D^{(12)}\right)^2}{D^{(kk)}}.
\eaq
Then, it is given by
\baqs
&\rho_{\mathrm{b}_i}^{(i)}(x_\iota) = 1, &&\rho_{\mathrm{b}_i}^{(i)}(x_\kappa) = 0,\\
&\rho_{\mathrm{b}_i}^{(k)}(x_\iota) = \frac{1}{2}\left(1-\frac{D^{(12)}}{D^{(kk)}}\right), &&\rho_{\mathrm{b}_i}^{(k)}(x_\kappa) = \left(1+\frac{D^{(12)}}{D^{(kk)}}\right).
\eaqs
In this case, there exists a second stationary state of the same form since $\iota$ and $\kappa$ are interchangeable.
\item A probability measure satisfying \ref{case steady 2 agg 1 node} exists if and only if
\baq\label{eq:2 agg 1 node existenz}
D^{(11)}&< -D^{(12)},\quad
D^{(22)}< -D^{(12)}.
\eaq
Then, it is given by
\baqs
&\rho_\mathrm{c}^{(1)}(x_\iota) = 1, \rho_\mathrm{c}^{(1)}(x_\kappa) = 0\text{ and } \rho_\mathrm{c}^{(2)}(x_\iota) = 1, \rho_\mathrm{c}^{(2)}(x_\kappa) = 0.
\eaqs
In this case, since $\iota$ and $\kappa$ can be swapped, a second stationary state of the same type exists.
\item A probability measure satisfying \ref{case steady 2 agg 2 nodes} exists if and only if
\baq\label{eq:2 agg 2 nodes existenz}
D^{(11)}&< D^{(12)},\quad
D^{(22)}< D^{(12)}.
\eaq
Then, it is given by
\baqs
\rho_\mathrm{d}^{(1)}(x_\iota) = 1, \rho_\mathrm{d}^{(1)}(x_\kappa) = 0\text{ and } \rho_\mathrm{d}^{(2)}(x_\iota) = 0, \rho_\mathrm{d}^{(2)}(x_\kappa) = 1.
\eaqs
Again, there is a second stationary state of the same form as $\iota$ and $\kappa$ can be swapped.
\end{enumerate}
\end{proposition}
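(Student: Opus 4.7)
The plan is to exploit the two-point structure to reduce the stationarity conditions, via Lemma \ref{lem:stationary state rho_nu^(i) disintegration}, to a pair of scalar relations in the imbalance variables
\begin{equation*}
u^{(k)} \coloneqq 2\rho^{(k)}(x_1)-1 \in [-1,1], \qquad k=1,2.
\end{equation*}
Using \eqref{K2}, the normalization $\rho^{(k)}(x_1)+\rho^{(k)}(x_2)=1$, and the constant-diagonal assumption \eqref{eq:K^ik const diag}, a direct substitution in \eqref{eq_def:v^(i)} yields
\begin{equation*}
v^{(i)}_{12} \;=\; \beta^{(i)}\bigl(D^{(i1)} u^{(1)} + D^{(i2)} u^{(2)}\bigr),\qquad i=1,2,
\end{equation*}
and $v^{(i)}_{21}=-v^{(i)}_{12}$ by antisymmetry. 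Combined with \eqref{eq:m=0_iff_r=0}, the disjunction from Lemma \ref{lem:stationary state rho_nu^(i) disintegration} then splits into the four cases \ref{case steady 0 agg}--\ref{case steady 2 agg 2 nodes} listed above, and it remains to convert each one into the explicit expressions and existence conditions of the proposition.

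For \ref{case steady 0 agg}, imposing $v^{(1)}_{12}=v^{(2)}_{12}=0$ becomes the homogeneous linear system with matrix $M = \bigl(\begin{smallmatrix}D^{(11)} & D^{(12)} \\ D^{(12)} & D^{(22)}\end{smallmatrix}\bigr)$. If $\det M = D^{(11)}D^{(22)}-(D^{(12)})^2 \neq 0$, the unique solution $u^{(1)}=u^{(2)}=0$ returns $\rho_\mathrm{a}$. If $\det M =0$, then $D^{(12)}\neq 0$ forces $D^{(11)}, D^{(22)} \neq 0$, the kernel is one-dimensional, and parameterizing $u^{(1)}=r$ yields $u^{(2)}=-\frac{D^{(11)}}{D^{(12)}}r = -\frac{D^{(12)}}{D^{(22)}}r$ after using $D^{(11)}/D^{(12)} = D^{(12)}/D^{(22)}$; this is the family $\rho_{\mathrm{a}_r}$, with the admissible range for $r$ forced by $|u^{(1)}|,|u^{(2)}|\le 1$.

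For \ref{case steady 1 agg}, I place species $i$ on the aggregated vertex by setting $u^{(i)}=1$ (the opposite sign is symmetric and produces the second stationary state obtained by swapping $\iota,\kappa$). Solving $v^{(k)}_{12}=0$ then yields $u^{(k)}=-D^{(12)}/D^{(kk)}$, and the range $|u^{(k)}|\le 1$ recovers \eqref{eq:1 agg existenz 1}. To actually sit in case \ref{case steady 1 agg} (rather than in \ref{case steady 0 agg}) the strict inequality $v^{(i)}_{21}>0$, i.e.\ $v^{(i)}_{12}<0$, must hold; inserting the values of $u^{(1)}, u^{(2)}$ reduces this to $D^{(ii)}<(D^{(12)})^2/D^{(kk)}$, which is \eqref{eq:1 agg existenz 2}. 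Cases \ref{case steady 2 agg 1 node} and \ref{case steady 2 agg 2 nodes} are treated identically but with $(u^{(1)},u^{(2)})\in\{(1,1),(1,-1)\}$; the strict inequalities $v^{(i)}_{12}<0$ (resp.\ $v^{(1)}_{12}<0$ and $v^{(2)}_{12}>0$) after substitution become precisely \eqref{eq:2 agg 1 node existenz} and \eqref{eq:2 agg 2 nodes existenz}.

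The main obstacle is bookkeeping rather than mathematical content: one must carefully track sign conventions and the fact that the general pair $(\iota,\kappa)$ in the case descriptions can be $(1,2)$ or $(2,1)$, so that each case actually produces the two symmetric states mentioned in the proposition; and one must check in each subcase that the candidate configuration lies strictly inside the declared case and not on the boundary of another, so that the stated conditions are necessary as well as sufficient. Once these points are handled, all computations reduce to elementary linear algebra in the variables $u^{(1)},u^{(2)}$.
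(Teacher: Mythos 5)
Your proposal is correct and follows essentially the same route as the paper's appendix proof: both reduce, via Lemma \ref{lem:stationary state rho_nu^(i) disintegration}, to the linear representation $v^{(i)}_{\iota\kappa}=D^{(ii)}s^{(i)}+D^{(12)}s^{(k)}$ in the imbalance variables (your $u^{(k)}$ are the paper's $s^{(k)}$ up to the choice of reference vertex) and then solve the four cases by elementary linear algebra, with the constraint $\abs{s^{(k)}}\le 1$ giving \eqref{eq:1 agg existenz 1} and the strict sign conditions on the velocities giving the remaining existence inequalities. The only differences are cosmetic (retaining the harmless factor $\beta^{(i)}>0$ and phrasing case \ref{case steady 0 agg} through the determinant of the interaction matrix rather than by chaining the two scalar relations).
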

Having established these existence conditions, we make the following observations:
\begin{corollary}\label{cor:only_a<=>pos_semidef}
Let $N=2$, let \eqref{eq:K^ik const diag} and \eqref{eq:m=0_iff_r=0} hold, and let $D^{(12)}\ne 0$. Then, $\rho_\mathrm{a}$ is the only stationary state  if and only if
\baq\label{eq:a_only}
D^{(11)}>0, D^{(22)} > 0,\quad D^{(11)}D^{(22)}> \left(D^{(12)}\right)^2.
\eaq
All existing stationary states are of the form $\rho_{\mathrm{a}_r}$ as in \eqref{eq:rho_a,r} if and only if
\baq\label{eq:a,r_only}
D^{(11)}>0, D^{(22)} > 0,\quad D^{(11)}D^{(22)}\geq \left(D^{(12)}\right)^2.
\eaq
\end{corollary}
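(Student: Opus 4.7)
The plan is to read the corollary off from Proposition \ref{prop:2-point-energy_coupled}, which lists all stationary states in the four families (a)--(d). The corollary asks when only family (a) is active, and within (a) whether $\rho_\mathrm{a}$ appears alone or as part of the continuum $\rho_{\mathrm{a}_r}$. Thus the task reduces to matching \eqref{eq:a_only} and \eqref{eq:a,r_only} against the existence criteria \eqref{eq:1 agg existenz 1}, \eqref{eq:1 agg existenz 2}, \eqref{eq:2 agg 1 node existenz}, \eqref{eq:2 agg 2 nodes existenz}.

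For the ``if'' direction in both equivalences, I would assume the stated condition on the diagonal $D$'s and rule out families (b), (c), (d). The key observation is that each of their existence criteria forces the reverse product inequality $D^{(11)}D^{(22)}<(D^{(12)})^2$, in conflict with \eqref{eq:a_only} / \eqref{eq:a,r_only}: for (b), multiplying $D^{(ii)}<(D^{(12)})^2/D^{(kk)}$ by $D^{(kk)}>0$ gives it; for (c), the pair $D^{(ii)}<-D^{(12)}$ forces $D^{(12)}<0$ and, on multiplication, the same inequality; (d) is symmetric with $D^{(12)}>0$. Inside family (a), the strict version in \eqref{eq:a_only} kills the degenerate branch $D^{(11)}D^{(22)}=(D^{(12)})^2$ of Proposition \ref{prop:2-point-energy_coupled}.a) and pins down $\rho_\mathrm{a}$, while the non-strict version in \eqref{eq:a,r_only} leaves that branch and hence the continuum $\rho_{\mathrm{a}_r}$ available.

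For the ``only if'' direction I would contrapose: when the hypothesis fails, I produce another stationary state. If $D^{(11)}D^{(22)}=(D^{(12)})^2$ with both $D^{(ii)}>0$, Proposition \ref{prop:2-point-energy_coupled}.a) directly yields $\rho_{\mathrm{a}_r}$ with $r\ne 0$, breaking uniqueness of $\rho_\mathrm{a}$ (this handles the first equivalence but is still consistent with the second). Otherwise, either $D^{(11)}D^{(22)}<(D^{(12)})^2$ with both $D^{(ii)}>0$, or some $D^{(ii)}\le 0$. In the first subcase I would split on whether $\max(D^{(11)},D^{(22)})\ge|D^{(12)}|$: if yes, the species $k$ realizing the maximum satisfies $|D^{(12)}|\le|D^{(kk)}|$ and the product inequality yields $D^{(ii)}<(D^{(12)})^2/D^{(kk)}$ for the other species, so (b) exists; if no, then both $D^{(ii)}<|D^{(12)}|$ and $\sgn D^{(12)}$ selects (c) or (d). The subcase of non-positive $D^{(ii)}$ is handled analogously by checking (c) or (d) according to $\sgn D^{(12)}$, with (b) available as a fall-back when the opposite $D^{(kk)}$ is sufficiently large.

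The main obstacle is organizing this case split in the necessity direction, because the magnitude constraint $|D^{(12)}|\le|D^{(kk)}|$ in the (b)-criterion does not by itself cover the entire complement of \eqref{eq:a_only}; one has to dovetail (b) with (c) and (d) and keep careful track of $\sgn D^{(12)}$. Once the split is in place, however, each sub-verification is a one-line check against the explicit inequalities in Proposition \ref{prop:2-point-energy_coupled}, and the same argument, with ``$>$'' relaxed to ``$\ge$'', yields the second equivalence.
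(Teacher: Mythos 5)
Your proposal is correct and follows essentially the same route as the paper: both reduce the corollary to the existence criteria \eqref{eq:1 agg existenz 1}, \eqref{eq:1 agg existenz 2}, \eqref{eq:2 agg 1 node existenz}, \eqref{eq:2 agg 2 nodes existenz} of Proposition \ref{prop:2-point-energy_coupled} and check the resulting inequalities, with the strict versus non-strict product inequality distinguishing $\rhoup_\mathrm{a}$ from the continuum $\rhoup_{\mathrm{a}_r}$. The only organizational difference lies in the necessity direction, where the paper argues directly and more economically --- the simultaneous failure of \eqref{eq:2 agg 1 node existenz} and \eqref{eq:2 agg 2 nodes existenz} already forces $D^{(kk)}\ge\abs{D^{(12)}}$ for some $k$, so \eqref{eq:1 agg existenz 1} holds and hence \eqref{eq:1 agg existenz 2} must fail, giving \eqref{eq:a,r_only} in one step --- whereas your contrapositive, witness-producing case split on $\max(D^{(11)},D^{(22)})$ versus $\abs{D^{(12)}}$ reaches the same conclusion with more bookkeeping but no gap.
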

\begin{remark}
Note that \eqref{eq:a_only} turns the corresponding minimizing problem \eqref{eq:2-point-minimization} into a strictly convex problem with unique minimizer $\rho_\mathrm{a}$, while \eqref{eq:a,r_only} turns \eqref{eq:2-point-minimization} into a convex problem with minimizers $\rho_{\mathrm{a}_r}$.
\end{remark}
\begin{corollary}\label{cor:c+d<=>b^2}
Let $N=2$, let \eqref{eq:K^ik const diag} and \eqref{eq:m=0_iff_r=0} hold, and let $D^{(12)}\ne 0$. If $\dot\rhoup_\mathrm{c}=\dot\rhoup_\mathrm{d}=0$, then we also have $\dot\rhoup_{\mathrm{b}_1}=\dot\rhoup_{\mathrm{b}_2}=0$. 

Conversely, $\dot\rhoup_{\mathrm{b}_1}=\dot\rhoup_{\mathrm{b}_2}=0$ implies that either $\dot\rho_\mathrm{c}=\dot\rhoup_\mathrm{d}=0$ holds or that we have $\dot\rhoup_\alpha=0$ and $D^{(ii)}=-\abs{D^{(12)}}$ for one $i\in\{1,2\}$ and one $\alpha\in\{\mathrm{c},\mathrm{d}\}$.
\end{corollary}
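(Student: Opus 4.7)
The approach is purely combinatorial: translate each assumption into the corresponding existence condition from Proposition \ref{prop:2-point-energy_coupled} and manipulate inequalities among $D^{(11)}$, $D^{(22)}$, and $D^{(12)}$. Recall that existence of $\rho_\mathrm{c}$ is equivalent to \eqref{eq:2 agg 1 node existenz}, of $\rho_\mathrm{d}$ to \eqref{eq:2 agg 2 nodes existenz}, and of $\rho_{\mathrm{b}_i}$ (with $k \ne i \in \{1,2\}$) to the pair \eqref{eq:1 agg existenz 1}--\eqref{eq:1 agg existenz 2}.

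For the forward implication, the assumed joint existence of $\rho_\mathrm{c}$ and $\rho_\mathrm{d}$ combines into $D^{(11)}, D^{(22)} < -\abs{D^{(12)}}$. In particular $D^{(kk)} < 0$ and $\abs{D^{(kk)}} > \abs{D^{(12)}}$, so \eqref{eq:1 agg existenz 1} holds. For \eqref{eq:1 agg existenz 2}, the negativity of $D^{(kk)}$ lets me rewrite $(D^{(12)})^2/D^{(kk)} = -(D^{(12)})^2/\abs{D^{(kk)}}$, which lies in $(-\abs{D^{(12)}}, 0)$; since $D^{(ii)} < -\abs{D^{(12)}}$, the required strict inequality is immediate, and both $\rho_{\mathrm{b}_1}$ and $\rho_{\mathrm{b}_2}$ exist.

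For the converse, \eqref{eq:1 agg existenz 1} forces $\abs{D^{(ii)}} \ge \abs{D^{(12)}} > 0$ for both $i$. A short sign analysis rules out $D^{(ii)} > 0$: if, say, $D^{(22)} > 0$, then \eqref{eq:1 agg existenz 2} for $i=1$ gives $D^{(11)} < (D^{(12)})^2/D^{(22)} \le \abs{D^{(12)}}$, hence $D^{(11)} \le -\abs{D^{(12)}}$ in view of $\abs{D^{(11)}} \ge \abs{D^{(12)}}$; but then $(D^{(12)})^2/D^{(11)} \le 0 < D^{(22)}$ contradicts \eqref{eq:1 agg existenz 2} for $i=2$. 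Consequently $D^{(11)}, D^{(22)} \le -\abs{D^{(12)}}$. If both inequalities are strict, the chains $D^{(ii)} < -\abs{D^{(12)}} \le -D^{(12)}$ and $D^{(ii)} < -\abs{D^{(12)}} \le D^{(12)}$ yield \eqref{eq:2 agg 1 node existenz} and \eqref{eq:2 agg 2 nodes existenz} respectively, so both $\rho_\mathrm{c}$ and $\rho_\mathrm{d}$ exist. If instead equality $D^{(ii)} = -\abs{D^{(12)}}$ holds for some $i$, substituting into \eqref{eq:1 agg existenz 2} for the other species forces its $D$ to be strictly below $-\abs{D^{(12)}}$; a sub-case on $\sgn D^{(12)}$ then shows that exactly one of \eqref{eq:2 agg 1 node existenz}, \eqref{eq:2 agg 2 nodes existenz} still holds, picking out the corresponding $\alpha \in \{\mathrm{c}, \mathrm{d}\}$ in the alternative of the statement.

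The main obstacle is not conceptual but bookkeeping: tracking strict versus non-strict inequalities throughout \eqref{eq:1 agg existenz 1}--\eqref{eq:1 agg existenz 2}, and correctly identifying which of $\rho_\mathrm{c}$ or $\rho_\mathrm{d}$ survives the degenerate case $D^{(ii)} = -\abs{D^{(12)}}$ as a function of $\sgn D^{(12)}$. Once these cases are disentangled, every individual step reduces to a one-line algebraic check.
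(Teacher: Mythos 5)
Your argument is correct and follows essentially the same route as the paper's proof: both directions are reduced to the existence conditions \eqref{eq:1 agg existenz 1}--\eqref{eq:1 agg existenz 2}, \eqref{eq:2 agg 1 node existenz}, and \eqref{eq:2 agg 2 nodes existenz} from Proposition \ref{prop:2-point-energy_coupled}, followed by the same sign and magnitude bookkeeping on $D^{(11)}$, $D^{(22)}$, $D^{(12)}$. If anything, your explicit treatment of the degenerate case $D^{(ii)}=-\abs{D^{(12)}}$ and of which of $\rhoup_\mathrm{c}$, $\rhoup_\mathrm{d}$ survives according to $\sgn D^{(12)}$ is spelled out more fully than in the paper, which leaves that last step implicit.
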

Having derived stationary states and conditions for their existence, we now want to learn more about their stability. Let us study the stability of the different stationary states in the case $D^{(12)}\ne0$ and $D^{(11)}D^{(22)}\ne \left(D^{(12)}\right)^2$. In this case, up to symmetry there are only four distinct stationary states. As seen in Proposition \ref{prop:2-point-energy_coupled}, the stationary distributions are given by
\begin{enumerate}[label=\alph*)]
\item\label{case stationary state 0 agg} $\rho_\mathrm{a}^{(1)}(x_\iota) = \frac{1}{2}$, $\rho_\mathrm{a}^{(2)}(x_\iota) = \frac{1}{2}$,
\item\label{case stationary state 1 agg} $\rho_{\mathrm{b}_i}^{(i)}(x_\iota) = 1$, $\rho_{\mathrm{b}_i}^{(k)}(x_\iota) = \frac{1}{2}\left(1-\frac{D^{(12)}}{D^{(kk)}}\right)$,
\item\label{case stationary state 2 agg 1 node} $\rho_\mathrm{c}^{(1)}(x_\iota)  = 1$, $\rho_\mathrm{c}^{(2)}(x_\iota) = 1$,
\item\label{case stationary state 2 agg 2 nodes} $\rho_\mathrm{d}^{(1)}(x_\iota)  = 1$, $\rho_\mathrm{d}^{(2)}(x_\iota) = 0$.
\end{enumerate}
Here it is important to keep in mind that since $\iota=1$ and $\iota=2$ are both possible, $\rhoup_{\mathrm{b}_1}$, $\rhoup_{\mathrm{b}_2}$, $\rhoup_{\mathrm{c}}$ and $\rhoup_{\mathrm{d}}$ each characterize a pair of two distinct stationary states. 
\begin{proposition}[Stability I]\label{prop:2-point-dynamics_coupled}
Let $N=2$, let \eqref{eq:K^ik const diag} and \eqref{eq:m=0_iff_r=0} hold, and assume $D^{(12)}\ne 0$ as well as $D^{(11)}D^{(22)}\ne\left(D^{(12)}\right)^2$. Then, it holds
\begin{enumerate}[label=\alph*)]
    \item $\rhoup_\mathrm{a}$ is asymptotically stable if and only if it is the only stationary state.
    \item If $\rhoup_{\mathrm{b}_i}$ with $i\in\{1,2\}$ are stationary, then they are both asymptotically stable if and only if $\rhoup_{\mathrm{b}_{k}}$ with $k=3-i$, $\rhoup_{\mathrm{c}}$ and $\rhoup_{\mathrm{d}}$ are not stationary.
    \item If $\rhoup_\mathrm{c}$ are stationary, then they are asymptotically stable.
    \item If $\rhoup_\mathrm{d}$ are stationary, then they are asymptotically stable.
\end{enumerate}
\end{proposition}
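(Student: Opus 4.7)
My plan is to recast the question as an energy-landscape problem on a compact two-dimensional state space and then to apply a Lyapunov--LaSalle argument. Using the probability constraint I parametrize states by $a=(a_1,a_2)\in[-1,1]^2$ via $a_i = 2\rho^{(i)}(x_1)-1$. A direct expansion exploiting the diagonal constancy \eqref{eq:K^ik const diag} and the symmetry of the kernels yields
\baqs
\mathcal{E}(a) = C + \tfrac{1}{4}\, a^{\top} D\, a, \qquad D = (D^{(ik)})_{i,k=1,2},
\eaqs
so the energy is a constant plus a quadratic form whose Hessian is the $2\times 2$ matrix $D$. In the same variables the velocity reads $v^{(i)}_{12} = \beta^{(i)} (D^{(i1)} a_1 + D^{(i2)} a_2)$, matching the complementary-slackness form \eqref{eq:stat_state_disintegration} of Lemma~\ref{lem:stationary state rho_nu^(i) disintegration} and reproducing the four stationary types enumerated in Proposition~\ref{prop:2-point-energy_coupled}.

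By \eqref{eq:energy time derivative} the energy is non-increasing along trajectories and, by Lemma~\ref{lem:stationary state rho_nu^(i) disintegration}, the zero-dissipation set equals the stationary set, which under the genericity assumption $D^{(11)}D^{(22)}\neq(D^{(12)})^2$ is finite. LaSalle's invariance principle on the compact $[-1,1]^2$ therefore forces every trajectory to converge to a single stationary state. This reduces the proposition to the purely static criterion: \emph{a stationary state $\rhoup_*$ is asymptotically stable if and only if it is an isolated strict local minimum of $\mathcal{E}$ on $[-1,1]^2$}. The nontrivial ``only if'' follows from monotonicity of $\mathcal{E}$ along trajectories---any strictly lower-energy point nearby yields a trajectory bounded away from $\rhoup_*$---while the ``if'' is the standard sublevel-set argument combined with LaSalle.

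I then check the local-minimum condition for each stationary type. For $\rhoup_\mathrm{a}$ at $a=0$, interior minimality is equivalent to $D$ being positive definite, which by Corollary~\ref{cor:only_a<=>pos_semidef} coincides with $\rhoup_\mathrm{a}$ being the only stationary state; this gives~(a). For $\rhoup_\mathrm{c}$ at the corners $\pm(1,1)$ and $\rhoup_\mathrm{d}$ at $\pm(1,-1)$, strict local minimality amounts to the strict one-sided first-order conditions $\mp\,\partial_{a_i}\mathcal{E}>0$, which after expansion coincide with the existence conditions \eqref{eq:2 agg 1 node existenz} and \eqref{eq:2 agg 2 nodes existenz} respectively. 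Hence whenever these states are stationary they are automatically strict local minima, yielding~(c) and~(d). For $\rhoup_{\mathrm{b}_i}$, which lies on an edge $a_i=\pm1$ with $a_k$ interior (where $k=3-i$), strict local minimality requires both the one-sided first-order condition in $a_i$---reproducing \eqref{eq:1 agg existenz 2}---and the second-order condition $\partial^2_{a_k}\mathcal{E}>0$, i.e., $D^{(kk)}>0$. A case inspection of the explicit existence conditions of Proposition~\ref{prop:2-point-energy_coupled} then shows that, given $\rhoup_{\mathrm{b}_i}$ is stationary, the condition $D^{(kk)}>0$ is equivalent to the simultaneous non-stationarity of $\rhoup_{\mathrm{b}_{3-i}}$, $\rhoup_\mathrm{c}$, and $\rhoup_\mathrm{d}$, delivering~(b).

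The main obstacle will be the LaSalle step, since the ODE is genuinely non-smooth: the mobility vanishes on the coordinate faces of $[-1,1]^2$ and the upwind nonlinearity $[\,\cdot\,]_+^{q-1}$ fails to be $C^1$ at zero, precluding classical linearization at boundary equilibria. I plan to bypass linearization by noting that at each boundary equilibrium the velocities $v^{(i)}_{\iota\kappa}$ are either zero or strictly nonzero with a sign dictated by Proposition~\ref{prop:2-point-energy_coupled}; by continuity this sign persists in a neighborhood, driving mass back onto the corresponding aggregation vertex or edge. The remaining algebraic equivalence in~(b) between $D^{(kk)}>0$ and the non-existence of competing stationary states is then a finite case check based on the explicit formulas of Proposition~\ref{prop:2-point-energy_coupled}.
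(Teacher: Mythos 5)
Your proposal is correct, and it runs on the same engine as the paper's argument --- the dissipation identity \eqref{eq:energy time derivative} combined with the representation \eqref{eq:2-point-energy:D-form} of $\mathcal{E}$ as a constant plus the quadratic form $\tfrac14 a^{\top}Da$ in the variables $a_i=2\rho^{(i)}(x_1)-1$ --- but you organize it quite differently. The paper treats the four claims case by case in Remarks \ref{rem:stability_lowest}, \ref{rem:stability_a}, \ref{rem:stability_c,d} and \ref{rem:stability_b}: global minimality plus finiteness of the set of minimizers for the stable cases, a connected-sublevel-set (``four corners'') argument when both $\rhoup_\mathrm{c}$ and $\rhoup_\mathrm{d}$ are stationary, and an explicit tangential perturbation giving $\mathcal{E}(\rhoup)-\mathcal{E}(\rhoup_{\mathrm{b}_i})=D^{(kk)}\varepsilon^2<0$ to destabilize $\rhoup_{\mathrm{b}_i}$. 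You instead establish one uniform criterion --- asymptotic stability iff strict isolated local minimality of the constrained quadratic on $[-1,1]^2$ --- and then read off (a)--(d) from first- and second-order one-sided optimality conditions at an interior point, an edge point and a corner, recovering \eqref{eq:a_only}, \eqref{eq:1 agg existenz 2}, \eqref{eq:2 agg 1 node existenz} and \eqref{eq:2 agg 2 nodes existenz} as exactly those conditions; I checked the remaining finite case analysis showing that, given $\dot\rhoup_{\mathrm{b}_i}=0$, one has $D^{(kk)}>0$ if and only if none of $\rhoup_{\mathrm{b}_{3-i}},\rhoup_\mathrm{c},\rhoup_\mathrm{d}$ is stationary (this parallels Corollaries \ref{cor:only_a<=>pos_semidef} and \ref{cor:c+d<=>b^2}), and it goes through. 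What your route buys is uniformity and a clean separation of dynamics (the Lyapunov/LaSalle reduction) from statics (constrained optimality of a quadratic). What it costs is that the ``if'' direction of your criterion still needs precisely the paper's device from Remark \ref{rem:stability_c,d} --- forward invariance of a small connected sublevel component around a strict isolated local minimum --- and the LaSalle step inherits the same well-posedness caveats for the non-Lipschitz upwind right-hand side that the paper also leaves implicit; your sign-persistence remark is an adequate substitute for linearization there. I see no gap.
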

\begin{proof}
This is a consequence of the Remarks \ref{rem:stability_a}, \ref{rem:stability_c,d} and \ref{rem:stability_b}, which give a more nuanced insight into the stability of the respective stationary states.
\end{proof}
It remains to consider the case $D^{(11)}D^{(22)}= (D^{(12)})^2$ where the following holds true.
\begin{proposition}[Stability II]\label{prop:2-point-dynamics_coupled_II}
Let $N=2$, let \eqref{eq:K^ik const diag} and \eqref{eq:m=0_iff_r=0} hold, and assume $D^{(12)}\ne 0$ as well as $D^{(11)}D^{(22)}=\left(D^{(12)}\right)^2$. Then, the stationary states $\rhoup_{\mathrm{a}_r}$ are stable, but not asymptotically stable.
\begin{enumerate}[label=\arabic*.]
    \item If $D^{(ii)}>0$ for $i=1,2$, then no other stationary state exists.
    \item If $D^{(ii)}<0$ for $i=1,2$ and $D^{(12)}<0$, then the stationary states $\rhoup_{\mathrm{c}}$ are asymptotically stable.
    \item If $D^{(ii)}<0$ for $i=1,2$ and $D^{(12)}>0$, then the stationary states $\rhoup_{\mathrm{d}}$ are asymptotically stable.
\end{enumerate}
\end{proposition}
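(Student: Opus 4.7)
The plan has two logical phases: classify which stationary states coexist with the $\rhoup_{\mathrm{a}_r}$ family in the degenerate regime $D^{(11)}D^{(22)}=(D^{(12)})^2$, and then analyze their stability. For the classification I would combine Proposition~\ref{prop:2-point-energy_coupled} with this identity. The key observation is that the existence criterion \eqref{eq:1 agg existenz 2} for $\rhoup_{\mathrm{b}_i}$ reduces to $D^{(ii)}<(D^{(12)})^2/D^{(kk)}=D^{(ii)}$ and is therefore never satisfied; hence no $\rhoup_{\mathrm{b}_i}$ states exist under the degeneracy. Next, $(D^{(12)})^2=D^{(11)}D^{(22)}>0$ forces $D^{(11)}$ and $D^{(22)}$ to share a strict sign. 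Substituting $|D^{(12)}|=\sqrt{D^{(11)}D^{(22)}}$ into the existence conditions \eqref{eq:2 agg 1 node existenz} and \eqref{eq:2 agg 2 nodes existenz} and performing elementary sign checks yields the three alternatives: in Case~1 neither $\rhoup_\mathrm{c}$ nor $\rhoup_\mathrm{d}$ exists; in Case~2 only $\rhoup_\mathrm{c}$ exists; in Case~3 only $\rhoup_\mathrm{d}$ exists.

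Failure of asymptotic stability for every individual $\rhoup_{\mathrm{a}_r}$ is immediate from Proposition~\ref{prop:2-point-energy_coupled}(a): these equilibria form a continuous one-parameter family, so arbitrarily close to any $\rhoup_{\mathrm{a}_r}$ there are other fixed points and convergence to the chosen one is impossible. For Lyapunov stability I would linearize \eqref{eq:NL2CIE} at $\rhoup_{\mathrm{a}_r}$; mass conservation reduces the perturbation to a scalar per species, the tangent direction of the curve $r\mapsto\rhoup_{\mathrm{a}_r}$ provides a zero eigenvalue, and the second eigenvalue is read off from the Hessian $A=(D^{(ik)})_{i,k=1,2}$ of \eqref{eq:shifted energy}, which has vanishing determinant and trace $D^{(11)}+D^{(22)}$ under the degeneracy. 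Combined with the energy dissipation identity \eqref{eq:energy time derivative} and the vanishing-velocity characterization of Lemma~\ref{lem:stationary state rho_nu^(i) disintegration}, this yields stability of the family as a set.

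For asymptotic stability of $\rhoup_\mathrm{c}$ in Case~2 (and $\rhoup_\mathrm{d}$ in Case~3 by the symmetric argument) I would exploit that at these states both species are Dirac-concentrated at a single vertex, so every off-diagonal mobility factor $\mathfrak{m}^{(i)}_{\iota\kappa}$ vanishes. The linearization of \eqref{eq:NL2CIE} then decouples the two species at leading order, and the one-species, two-point linearization inherits the contractive eigenvalue already identified under $D^{(ii)}<0$ in the second alternative of Proposition~\ref{prop:2-point-dynamics_decoupled}; the sign of $D^{(12)}$ selects whether the attracting configuration is the co-aggregated $\rhoup_\mathrm{c}$ or the segregated $\rhoup_\mathrm{d}$. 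Feeding this back into \eqref{eq:energy time derivative} gives local exponential attraction.

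The subtle step is the Lyapunov stability of $\rhoup_{\mathrm{a}_r}$ in Cases~2 and~3: there the transverse eigenvalue of $A$ is negative, so $\mathcal{E}$ is \emph{not} a naive Lyapunov function and a purely energetic argument would predict instability towards the attractors $\rhoup_\mathrm{c}$ or $\rhoup_\mathrm{d}$. Stability in this regime must instead be extracted from the non-smoothness of the upwind flux $v\mapsto v_+^{q-1}$ at $v=0$, which degenerates the effective linearization at $\rhoup_{\mathrm{a}_r}$, together with the global fact that $\rhoup_\mathrm{c}$ and $\rhoup_\mathrm{d}$ sit at positive distance from the curve so that sufficiently small perturbations cannot escape in finite time. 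This is the genuinely non-trivial part and justifies deferring the technical calculations to the appendix.
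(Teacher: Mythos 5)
Your classification phase is sound and follows the paper's route: the degeneracy $D^{(11)}D^{(22)}=(D^{(12)})^2$ kills condition \eqref{eq:1 agg existenz 2} (it becomes $D^{(ii)}<D^{(ii)}$), forces $D^{(11)},D^{(22)}$ to share a sign, and a sign check in \eqref{eq:2 agg 1 node existenz}, \eqref{eq:2 agg 2 nodes existenz} gives exactly the three alternatives; Case~1 is also immediate from Corollary \ref{cor:only_a<=>pos_semidef}. The non--asymptotic-stability of $\rhoup_{\mathrm{a}_r}$ via the continuum of equilibria is likewise fine.

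The genuine gap is your final paragraph. In Cases~2 and~3 the matrix $A$ is negative semidefinite, so by \eqref{eq:2-point-energy:D-form} the line $\{\rhoup_{\mathrm{a}_r}\}$ is precisely the ridge of energy \emph{maximizers}, and any transverse perturbation strictly lowers the energy; since by \eqref{eq:energy time derivative} the energy is nonincreasing and strictly decreasing away from equilibria, such a trajectory must leave every small neighbourhood and converge to $\rhoup_{\mathrm{c}}$ (resp.\ $\rhoup_{\mathrm{d}}$). These states are therefore \emph{unstable} in the attractive regime --- the paper says exactly this in its summarizing remark on the coupled case (``when the self-interactions are attractive, then they are unstable''), so the blanket sentence in the proposition is to be read as pertaining to Case~1 only. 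Your attempted rescue cannot work: the map $v\mapsto v_+^{q-1}$ is degenerate at $0$ only for $q>2$ (i.e.\ $p<2$), and even then degeneracy merely slows the escape; Lyapunov stability is a statement about all $t\ge 0$, so ``cannot escape in finite time'' proves nothing. A secondary weak point is the claimed decoupling at $\rhoup_{\mathrm{c}}$: with $\mathfrak{m}^{(i)}_{\iota\kappa}=\rho^{(i)}_\iota$ the mobility does \emph{not} vanish at the occupied vertex, and the species are coupled through $v^{(i)}$ regardless of the mobility. The paper avoids linearizing at these degenerate boundary equilibria altogether: Remark \ref{rem:stability_c,d} obtains asymptotic stability of $\rhoup_{\mathrm{c}}$, $\rhoup_{\mathrm{d}}$ by trapping trajectories in the connected components of a sublevel set of $\mathcal{E}$ containing the corners, using monotonicity of the energy along the flow; adopting that argument (noting that in the degenerate case the separating level is that of the $\rhoup_{\mathrm{a}_r}$ ridge itself) would repair this part of your proof.
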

\begin{proof}
This is a consequence of the Remarks \ref{rem:stability_a}, \ref{rem:stability_c,d} and \ref{rem:stability_b}, which give a more nuanced insight into the stability of the respective stationary states.
\end{proof}

\begin{remark}\label{rem:two-point_decoupled}
Summarizing the results for the coupled case, we see the following:
\begin{itemize}
    \item If both self-interactions are attractive (which implies $D^{(ii)}<0$ for $i=1,2$), then Proposition \ref{prop:2-point-energy_coupled} shows that the stationary states with the lowest energy are such that the species are both aggregated. If $D^{(12)}<0$, the lowest energy is achieved for aggregation of both species at the same vertex. If, however, $D^{(12)}>0$, the lowest energy is achieved for aggregation at opposite vertices. This is true both in the regime of dominant self-interactions, i.e., when $D^{(11)}D^{(22)}\ge(D^{(12)})^2$, and in the regime of dominant cross-interactions, i.e. when $D^{(11)}D^{(22)}<(D^{(12)})^2$.
    \item In the regime of dominant self-interactions $D^{(11)}D^{(22)}\ge(D^{(12)})^2$ Proposition \ref{prop:2-point-energy_coupled} implies that the sign of the self-interactions, which is the same for the two species, dictates whether the stationary states with the lowest energy aggregate on a vertex ($D^{(ii)}<0$ for $i=1,2$) or are distributed uniformly, ($D^{(ii)}>0$ for $i=1,2$).
    \item In the regime of dominant cross-interaction ( $D^{(11)}D^{(22)}<(D^{(12)})^2$) Proposition \ref{prop:2-point-energy_coupled} yields that the cross-interactions dictate that the optimal stationary states are such that at least one species is aggregated -- even for two repulsive self-interactions.
    \item Regarding the stability, Proposition \ref{prop:2-point-dynamics_coupled} shows that when $D^{(11)}D^{(22)}\ne\left(D^{(12)}\right)^2$, then optimal stationary states are asymptotically stable. Further, it shows that when aggregation of both species at the same vertex as well as aggregation of the species at opposite vertices are stationary, then all these states are asymptotically stable. Finally, we see that when $D^{(11)}D^{(22)}\ne\left(D^{(12)}\right)^2$, then stationary states that are not energetically optimal or such that both species are aggregated, are unstable.
    \item Proposition \ref{prop:2-point-energy_coupled} shows that when the self- and cross-interactions are balanced ($D^{(11)}D^{(22)}\ne\left(D^{(12)}\right)^2$), then there exists a continuum of stationary states. By Proposition and \ref{prop:2-point-dynamics_coupled_II}, these are stable but not asymptotically stable when the self-interactions are repulsive. Conversely, when the self-interactions are attractive, then they are unstable, and there exist two asymptotically stable stationary states, where both species are aggregated either at the same vertex ($D^{(12)}<0$) or opposite vertices ($D^{(12)}>0$).
\end{itemize}
\end{remark}

\begin{example}[Volume filling mobility]
    The volume filling mobility $\mathfrak{m}^{(i)}_{\iota\kappa}=\rho^{(i)}_\iota(1-\rho^{(i)}_\kappa)$, is covered by the considerations of this section, if $\mu_1,\mu_2\le 1$. On the other hand, if $\mu_1,\mu_2>2$, then the set of admissible distributions for the dynamics is reduced. For example, in the case $\mu_1=\mu_2=2$, the only admissible distribution is $\rho^{(1)}(x_1)=\rho^{(1)}(x_2)=\rho^{(2)}(x_1)=\rho^{(2)}(x_2)=1/2$. If $1<\mu_1,\mu_2<2$, then there are more admissible distributions, but the species still cannot aggregate in a single vertex, even if the interaction kernels are attractive, i.e., if $D^{(ik)}<0$ for $i,k=1,2$. If $(\mu_1)^{-1}+(\mu_2)^{-1}<1$, then no admissible distribution exists.
    
    If $\mu_1,\mu_2 = 1$, then 
    \baqs
        \mathfrak{m}^{(i)}_{\iota\kappa}=\begin{cases}
        \rho^{(i)}_\iota(1-\rho^{(i)}_\iota), &\text{if}\quad \rho^{(i)}_\iota\in[0,1],\\
        -\infty, &\text{if}\quad \rho^{(i)}_\iota\notin[0,1],
        \end{cases}
    \eaqs
    is also an admissible, though rather unnatural mobility map. With this mobility $\rho_{\mathrm{c}}$ as well as $\rho_{\mathrm{d}}$ are always stationary states, since the mobility vanishes at these points, even if the self interaction kernels are repulsive.
\end{example}

\begin{figure}[ht!]
    \captionsetup[subfloat]{margin=10pt,format=hang,singlelinecheck=false}
    \centering
    \subfloat[$0>D^{(12)}>D^{(ii)}$]{
    \includegraphics[height=3.85cm]{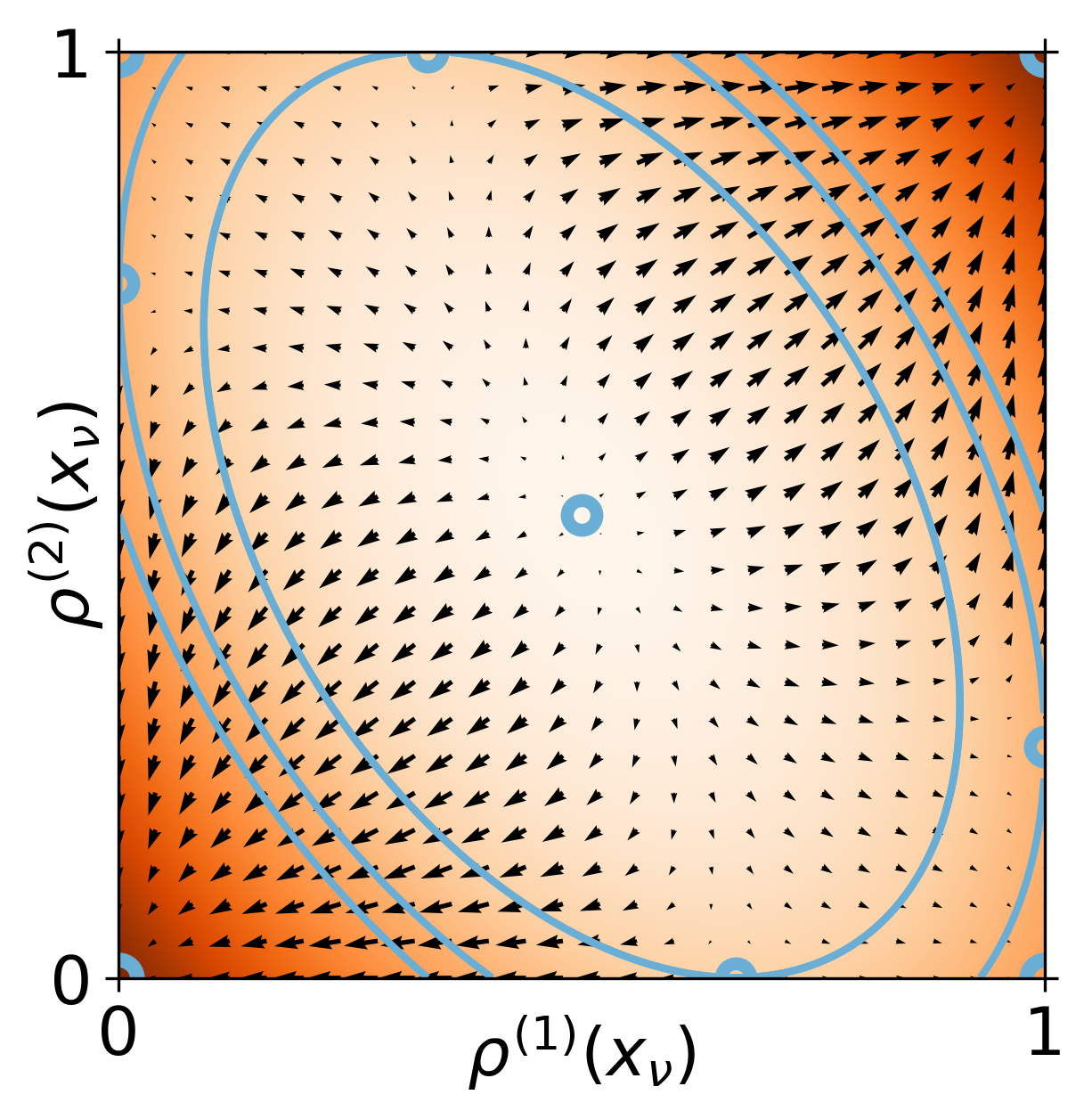}
    }
    \subfloat[$0=D^{(12)}>D^{(ii)}$]{
    \includegraphics[height=3.85cm]{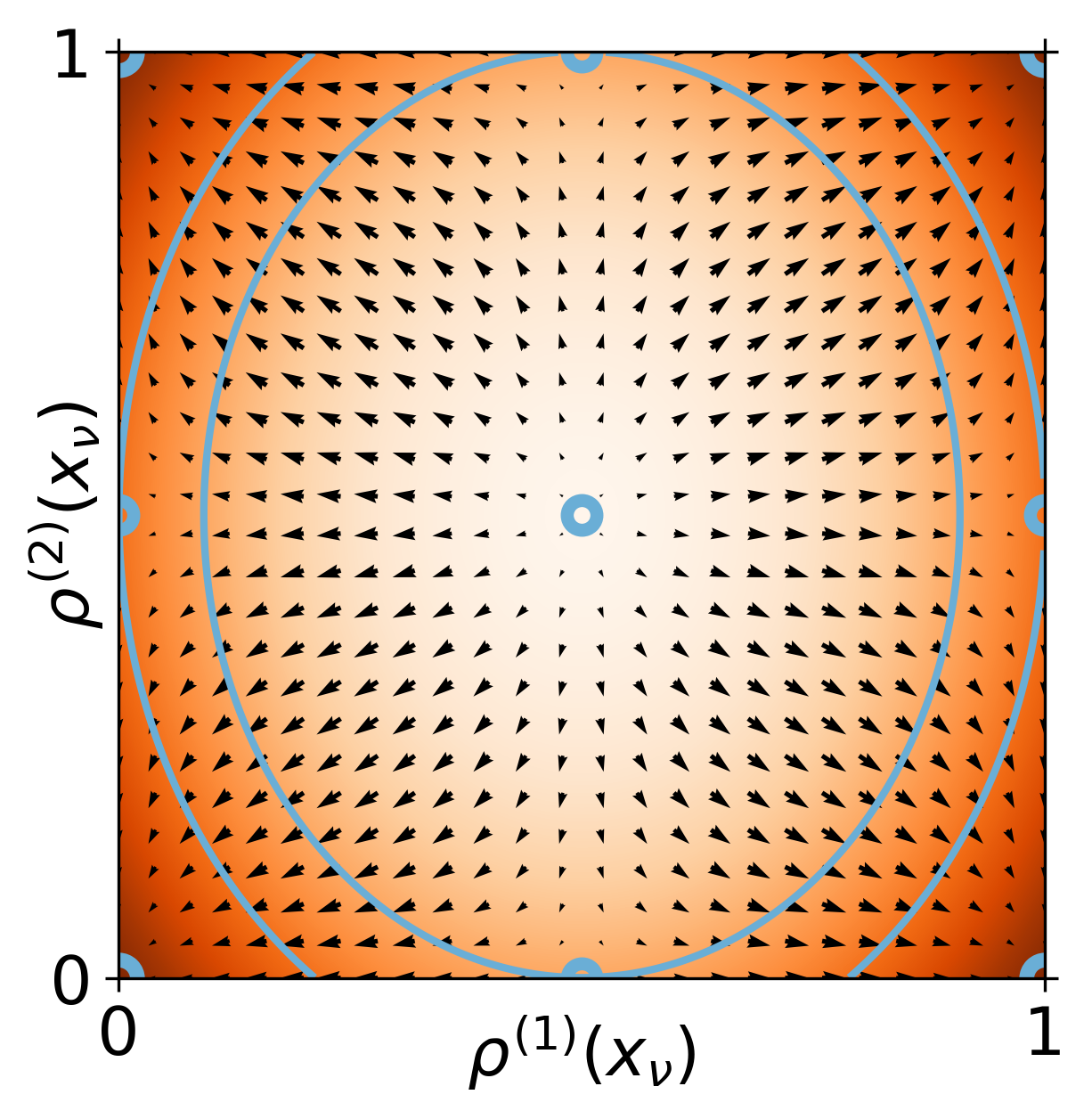}
    }
    \subfloat[$0<D^{(12)}<-D^{(ii)}\qquad\qquad$]{
    \includegraphics[height=3.85cm]{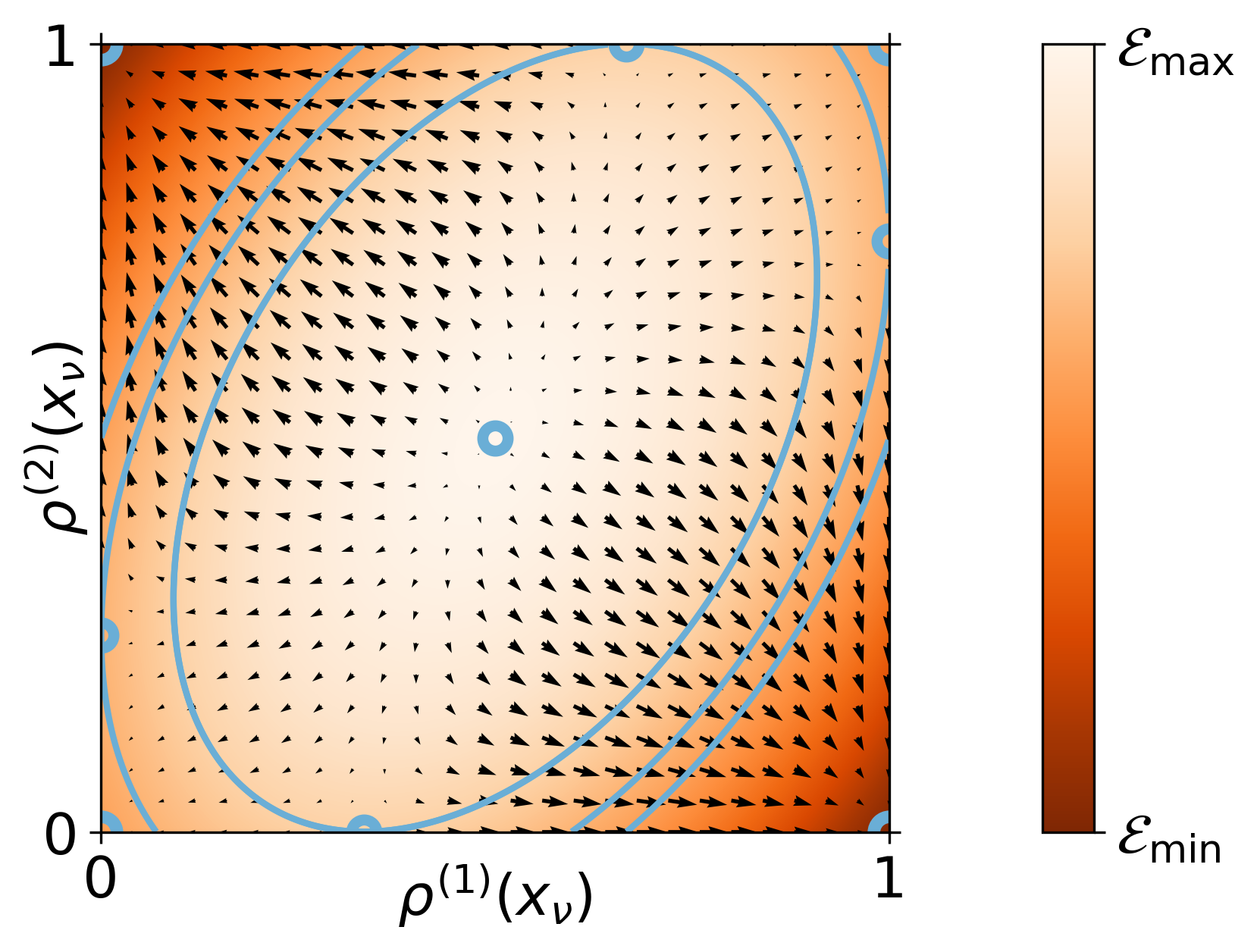}
    }
    \caption{Fixing $\beta^{(1)}=\beta^{(2)}$, $p=2$ and $\mathfrak{m}^{(i)}_{\iota\kappa}=\rho^{(i)}_\iota$, we illustrate the energy landscapes (heat map), stationary states (light blue circles), their corresponding energy level sets (light blue lines), and dynamics (quivers) for different cross-interaction, while staying in the regime $D^{(11)},D^{(22)}<0$, $D^{(11)}D^{(22)}>\left(D^{(12)}\right)^2$. }
    \label{fig:two-point_self:att_vary:cross}
\end{figure}

\begin{figure}[ht!]
    \captionsetup[subfloat]{margin=10pt,format=hang,singlelinecheck=false}
    \centering
    \subfloat[$D^{(12)}>D^{(ii)}>0$]{
    \includegraphics[height=3.85cm]{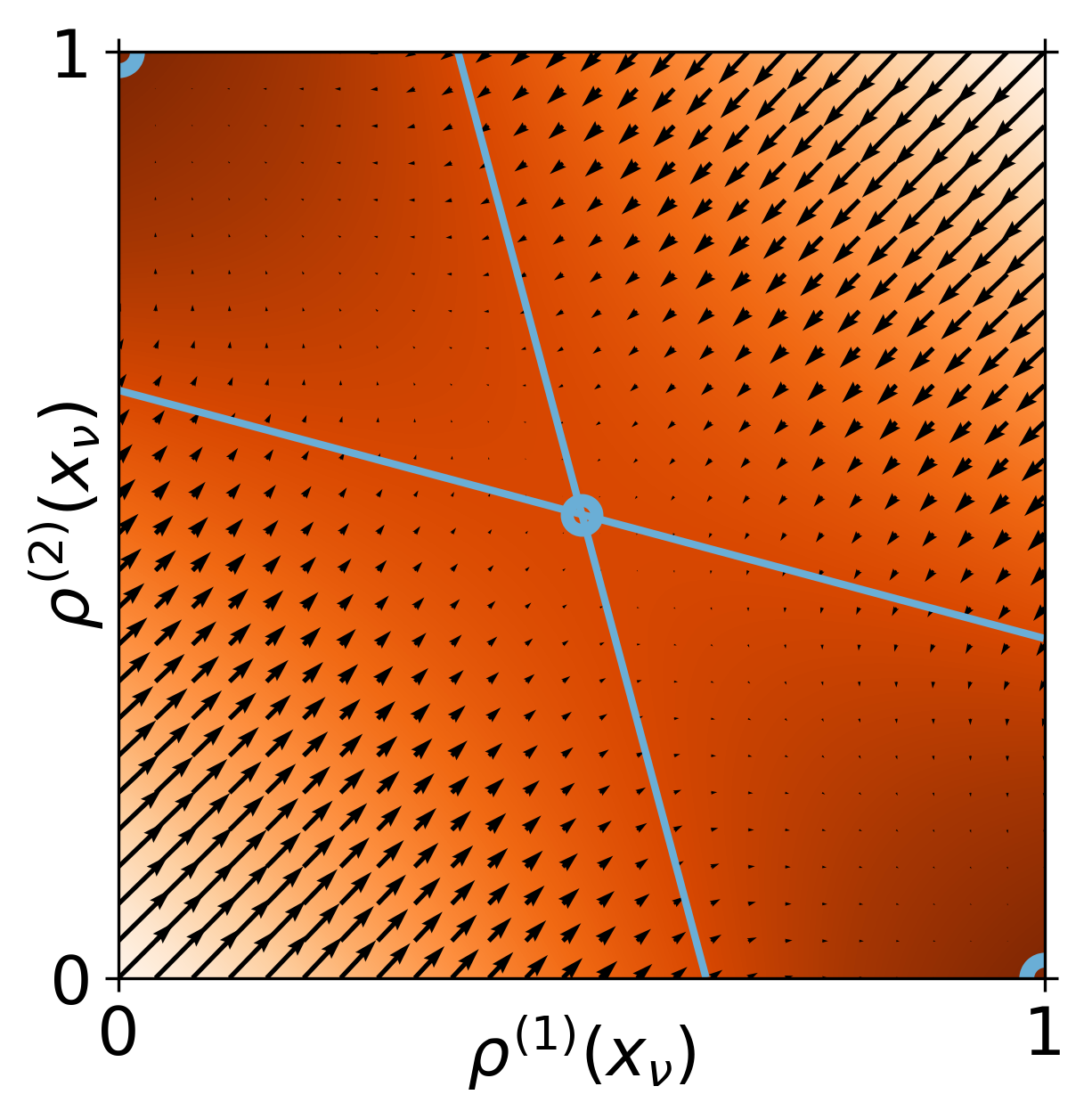}
    }
    \subfloat[$D^{(12)}=D^{(ii)}>0$]{
    \includegraphics[height=3.85cm]{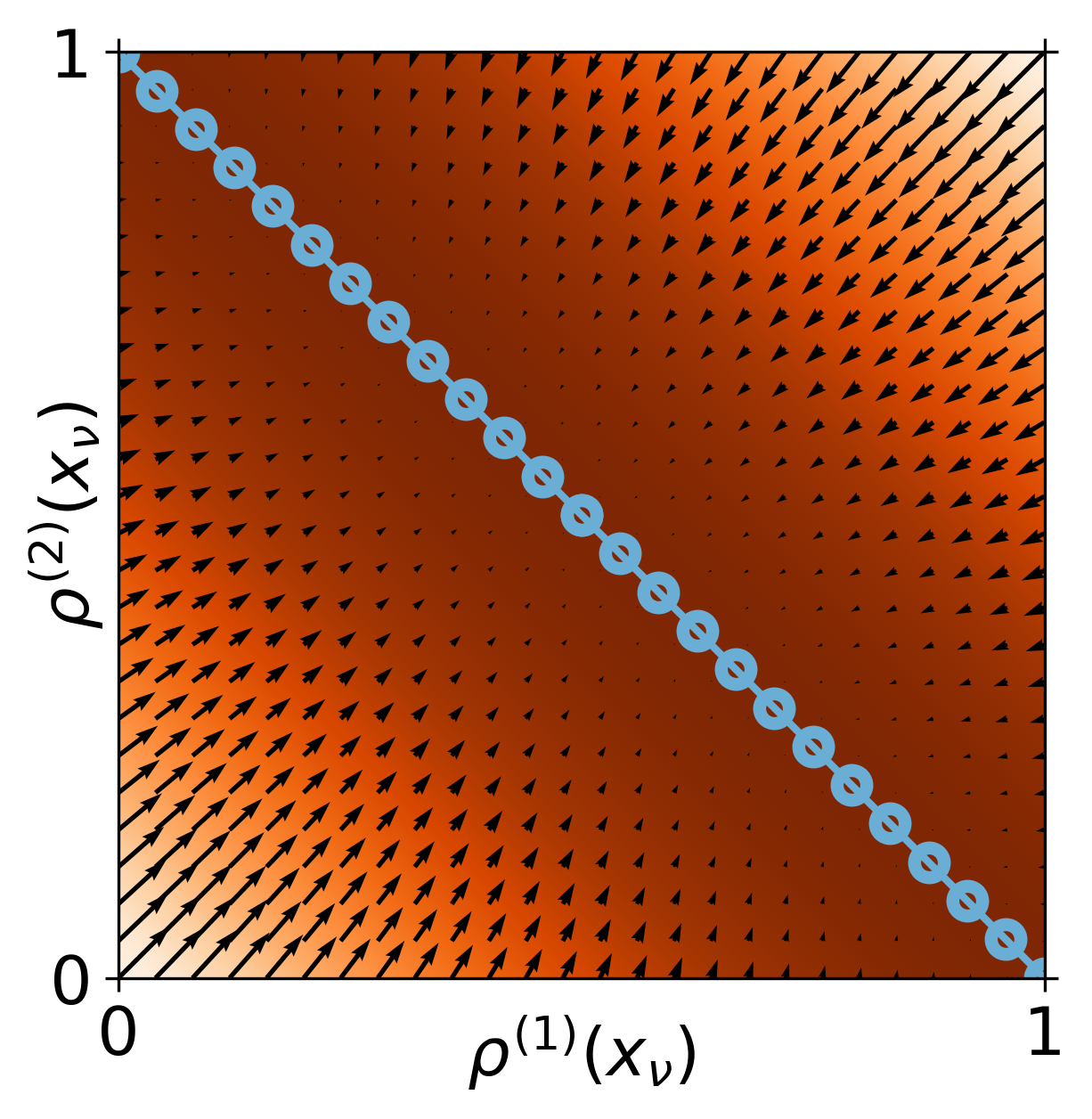}
    }
    \subfloat[$D^{(ii)}>D^{(12)}>0\qquad\qquad$]{
    \includegraphics[height=3.85cm]{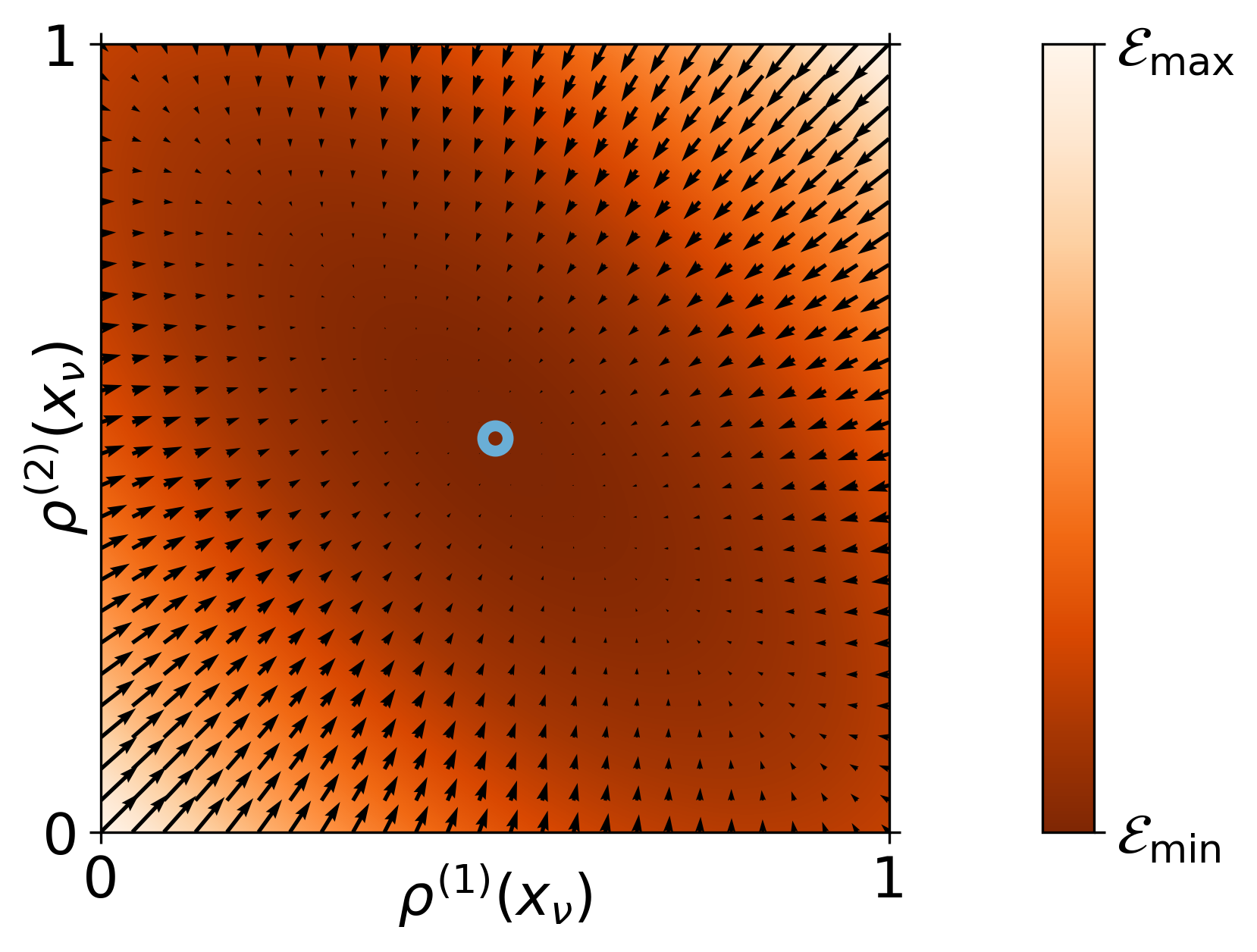}
    }\\
    \subfloat[$D^{(12)}>-D^{(ii)}>0$]{
    \includegraphics[height=3.85cm]{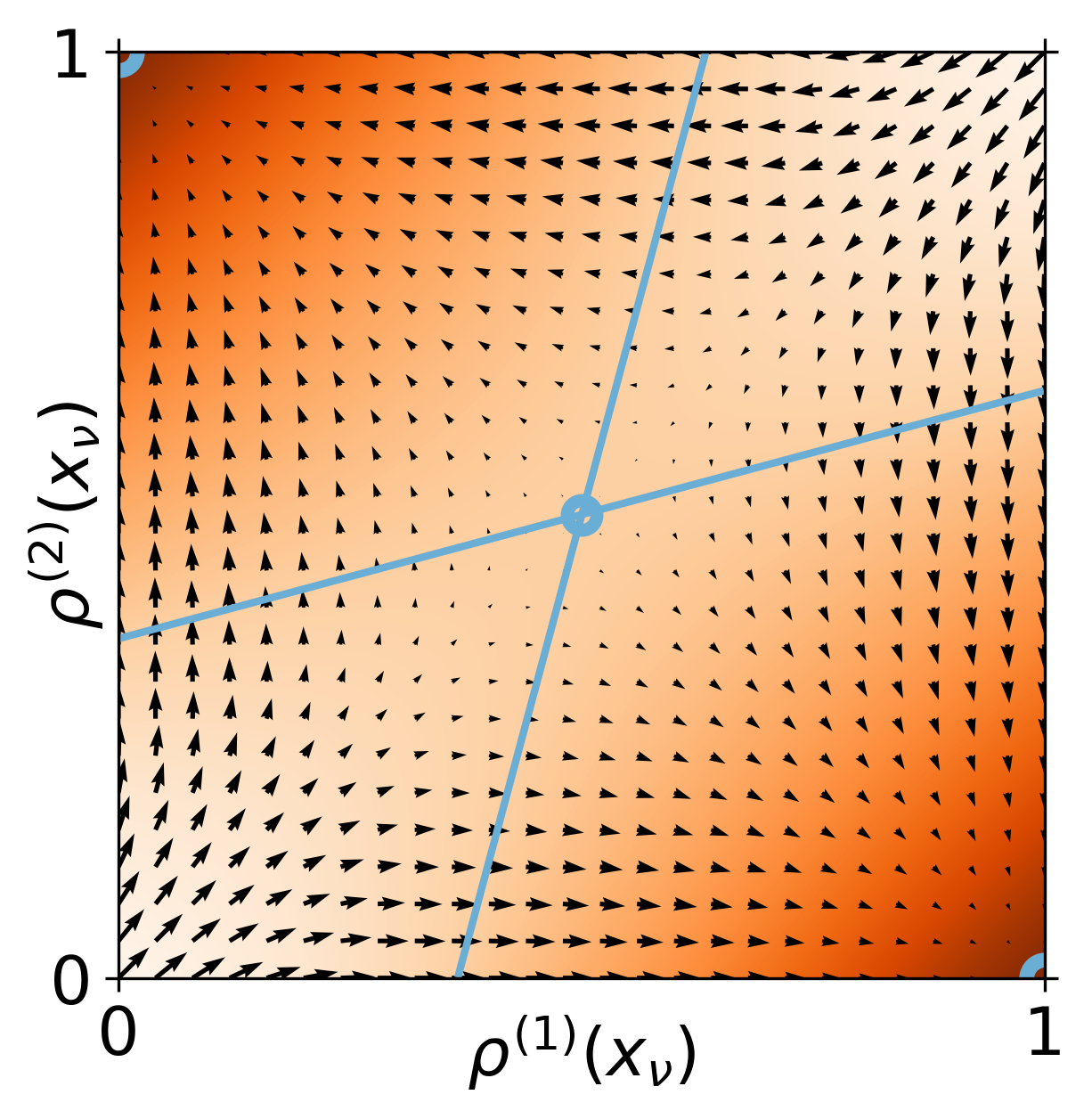}
    }
    \subfloat[$D^{(12)}=-D^{(ii)}>0$]{
    \includegraphics[height=3.85cm]{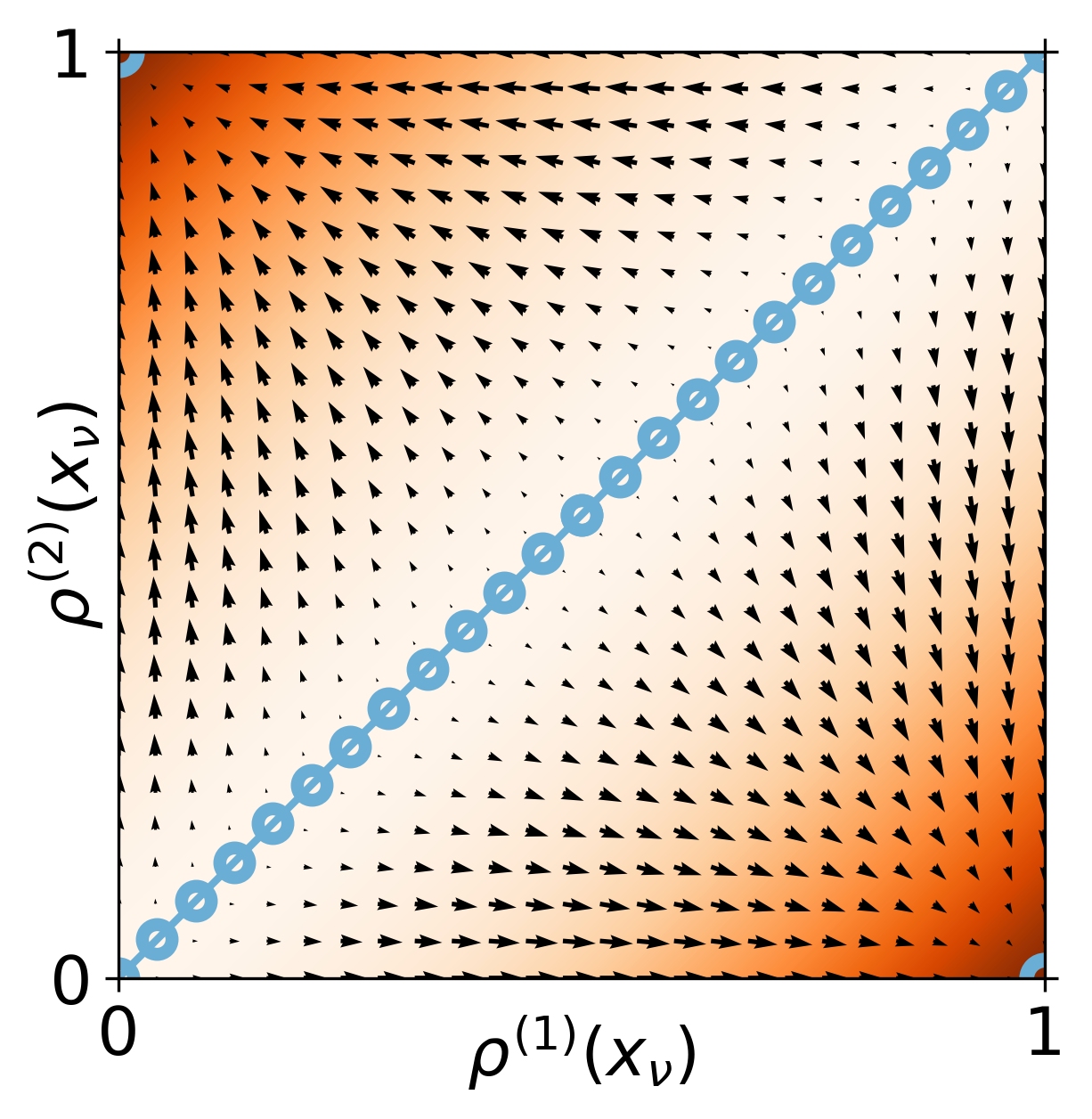}
    }
    \subfloat[$D^{(12)}>-D^{(ii)}>0\qquad\qquad$]{
    \includegraphics[height=3.85cm]{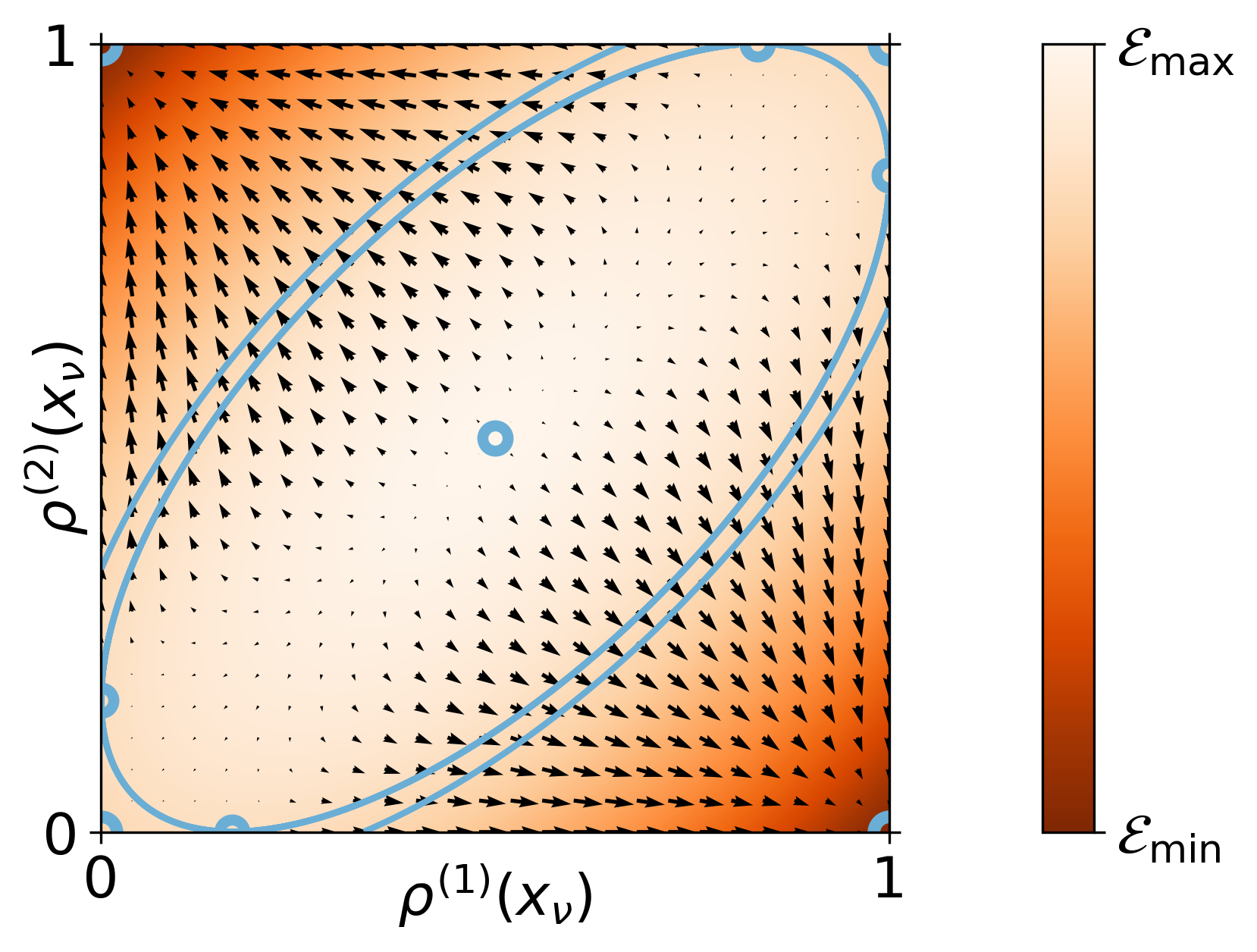}
    }
    \caption{Fixing $\beta^{(1)}=\beta^{(2)}$, $p=2$ and $\mathfrak{m}^{(i)}_{\iota\kappa}=\rho^{(i)}_\iota$, we display the energy landscapes (heat map), stationary states (light blue circles), their corresponding energy level sets (light blue lines), and dynamics (quivers) in the different regimes $D^{(11)}D^{(22)} \lesseqqgtr  \left(D^{(12)}\right)^2$ for repulsive (top row) and attractive (bottom row) self-interactions.}
    \label{fig:two-point_vary:det}
\end{figure}

\begin{figure}[ht!]
    \captionsetup[subfloat]{margin=10pt,format=hang,singlelinecheck=false}
    \centering
    \subfloat[$D^{(11)}=-D^{(22)}=-2$\newline
    $D^{(12)}=1$]{
    \includegraphics[height=3.85cm]{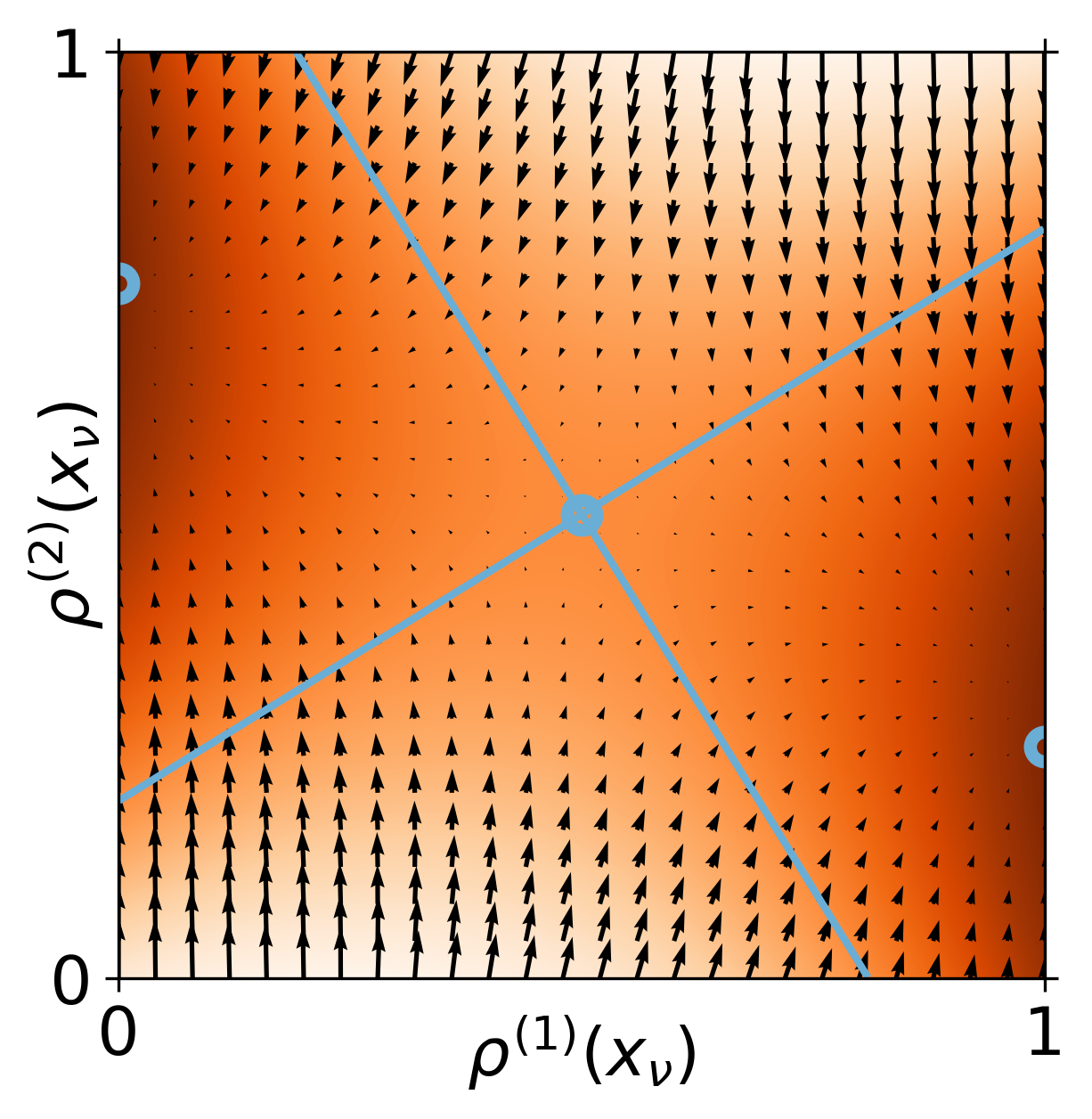}
    }
    \subfloat[$D^{(11)}=-D^{(22)}=0$\newline
    $D^{(12)}=1$]{
    \includegraphics[height=3.85cm]{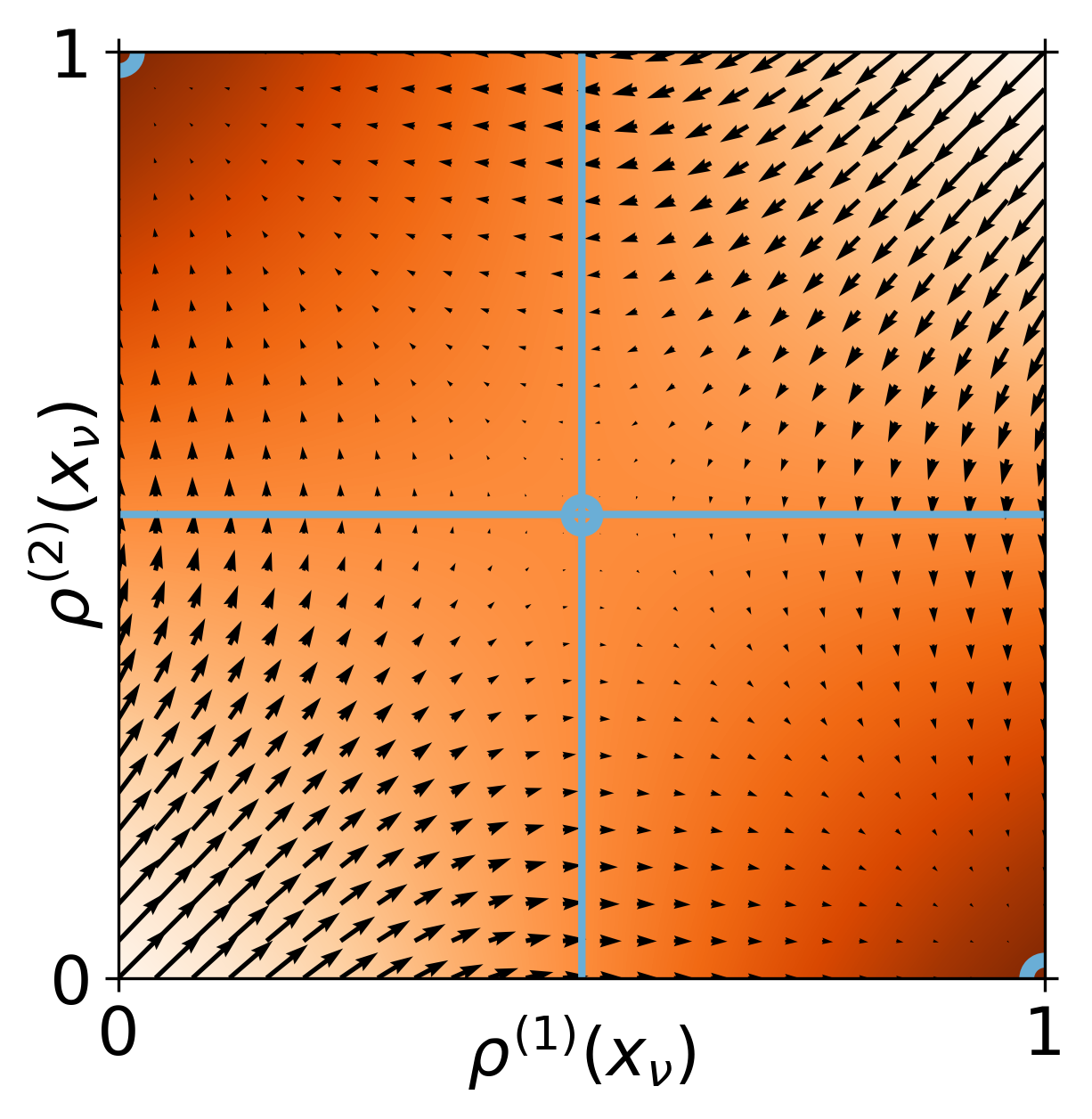}
    }
    \subfloat[$D^{(11)}=-D^{(22)}=2$\newline
    $D^{(12)}=1$]{
    \includegraphics[height=3.85cm]{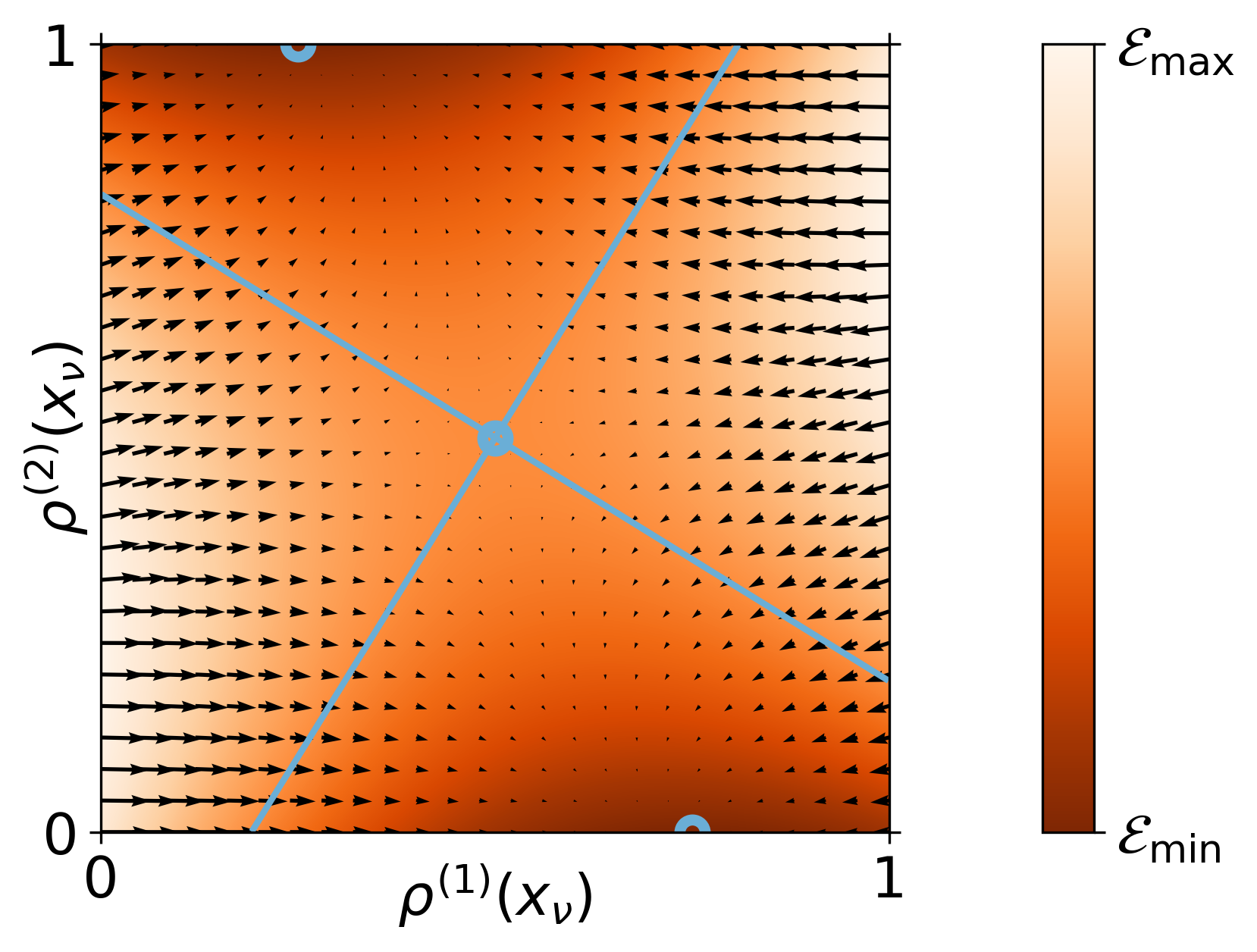}
    }
    \caption{Fixing $\beta^{(1)}=\beta^{(2)}$, $p=2$ and $\mathfrak{m}^{(i)}_{\iota\kappa}=\rho^{(i)}_\iota$, we illustrate the energy landscapes (heat map), stationary states (light blue circles), their corresponding energy level sets (light blue lines), and dynamics (quivers), for self-interactions of opposite signs and varying strength, while the cross-interactions are fixed and repulsive} \label{fig:two-point_rotate}
\end{figure}

\begin{figure}[ht!]
    \captionsetup[subfloat]{margin=10pt,format=hang,singlelinecheck=false}
    \centering
    \subfloat[$D^{(ii)}=1$, $D^{(12)}=0$, $\beta^{(1)}>\beta^{(2)}$]{
    \includegraphics[height=3.85cm]{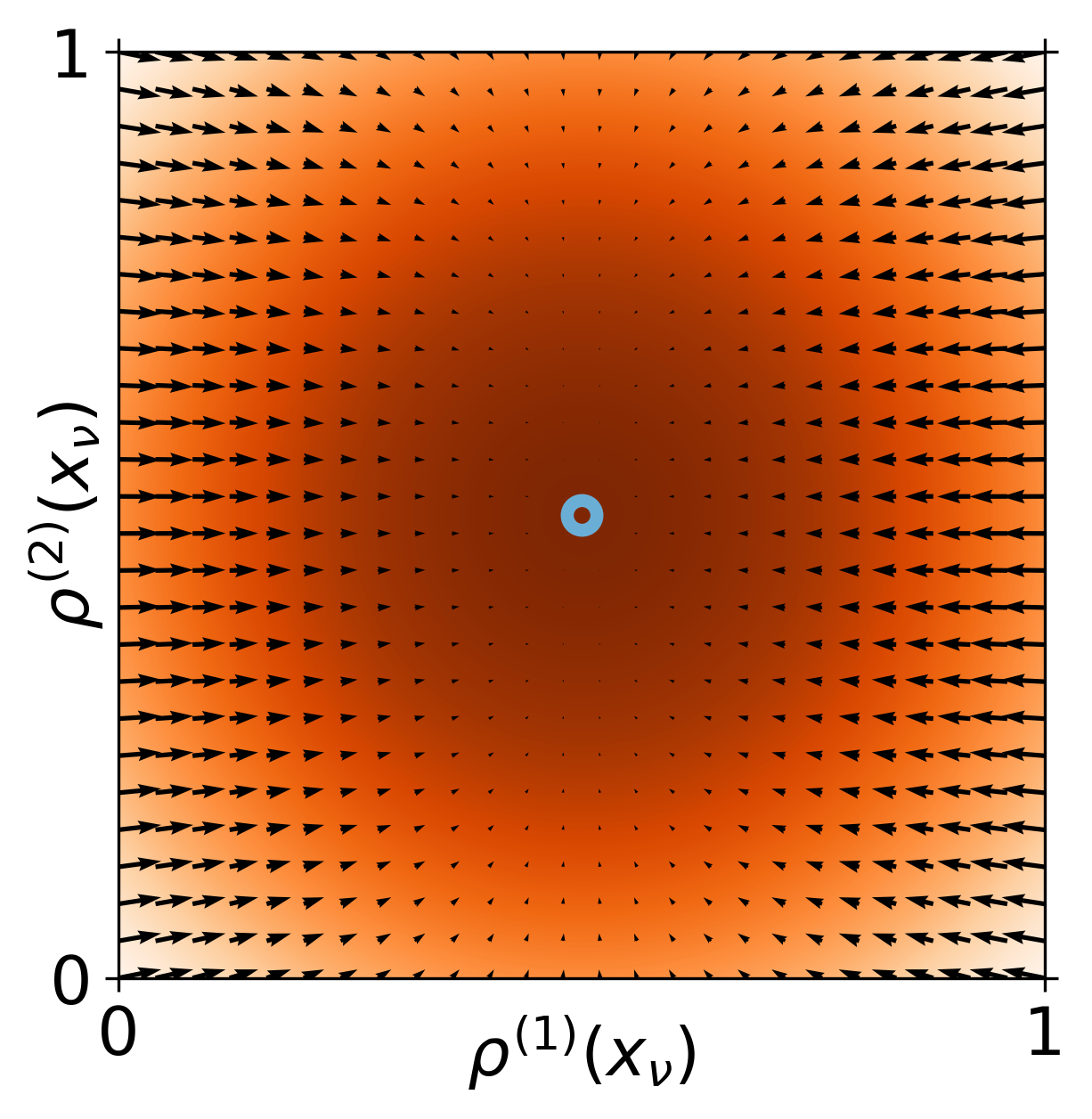}
    }
    \subfloat[$D^{(ii)}=1$, $D^{(12)}=0$, $\beta^{(1)}=\beta^{(2)}$]{
    \includegraphics[height=3.85cm]{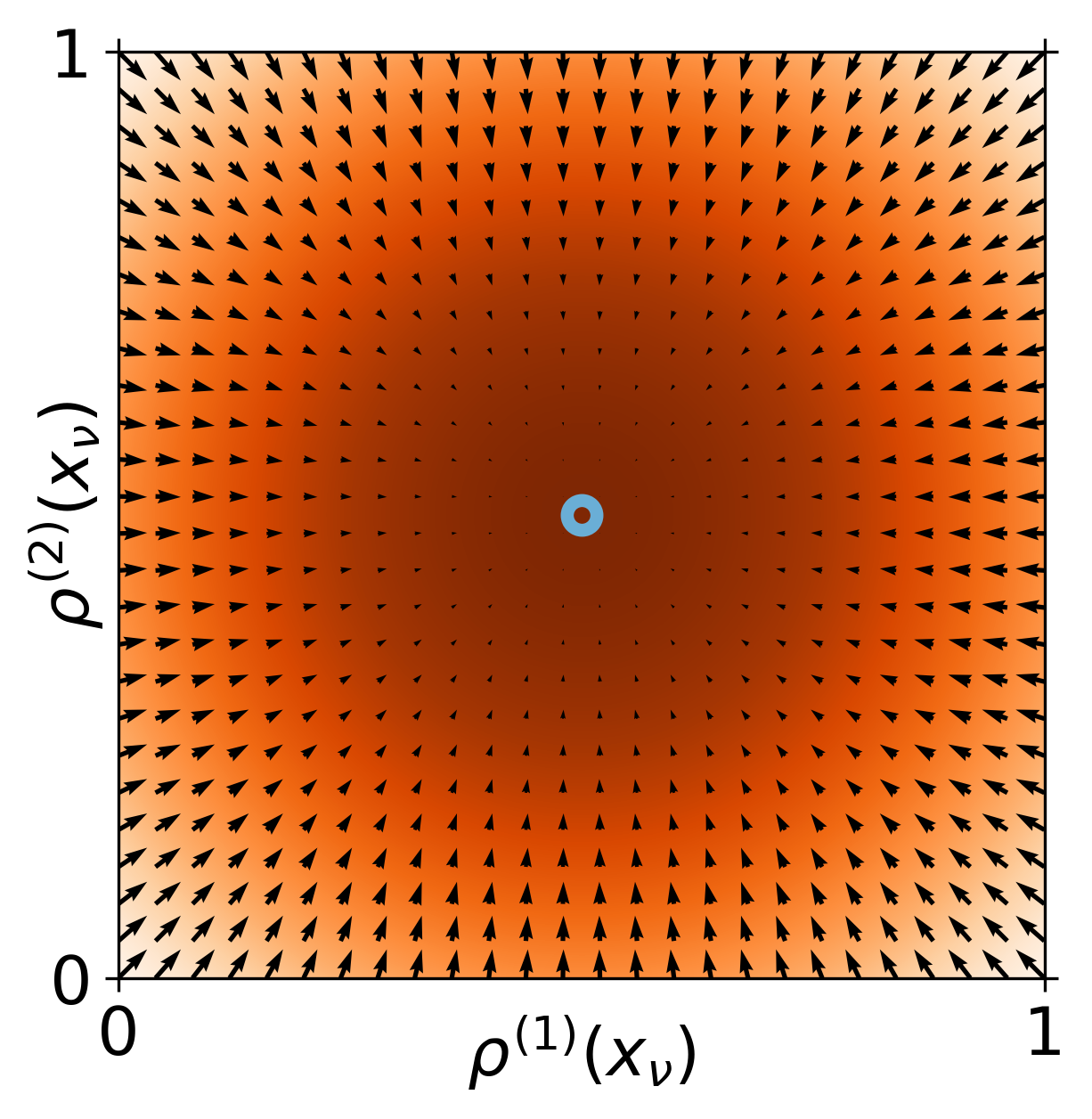}
    }
    \subfloat[$D^{(ii)}=1$, $D^{(12)}=0$,\newline $\beta^{(1)}<\beta^{(2)}$]{
    \includegraphics[height=3.85cm]{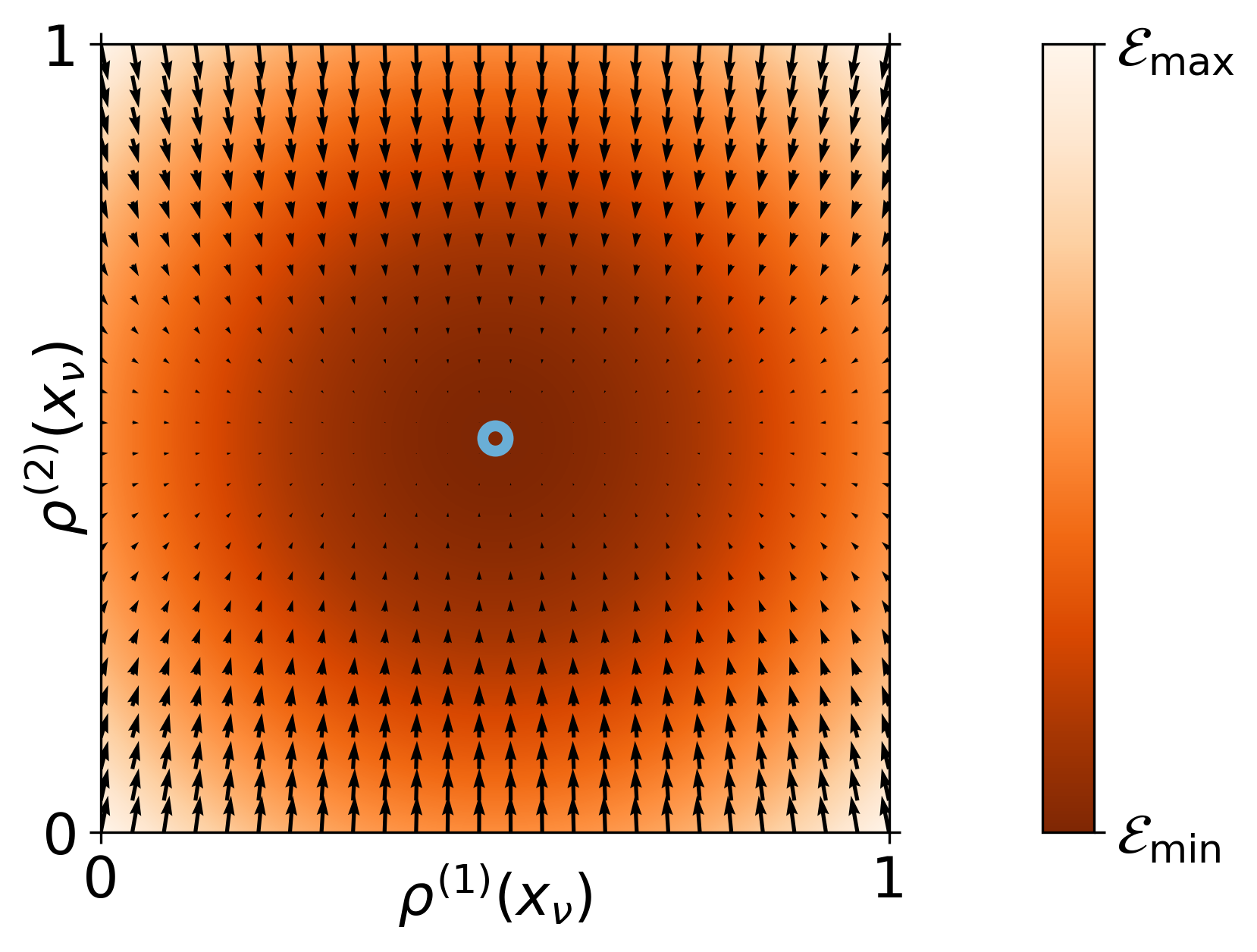}
    }
    \caption{Fixing $p=2$ and $\mathfrak{m}^{(i)}_{\iota\kappa}=\rho^{(i)}_\iota$, $D^{(11)}=D^{(22)}=1$ and $D^{(12)}=0$, we display the energy landscapes (heat map), stationary states (light blue circles), and dynamics (quivers) for varying $\beta^{(1)}$ and $\beta^{(2)}$.}
    \label{fig:two-point_vary:beta}
\end{figure}
\begin{figure}[ht!]
    \captionsetup[subfloat]{margin=10pt,format=hang,singlelinecheck=false}
    \centering
    \subfloat[$D^{(ii)}=1$, $D^{(12)}=0$,\newline $p=1.25$]{
    \includegraphics[height=3.85cm]{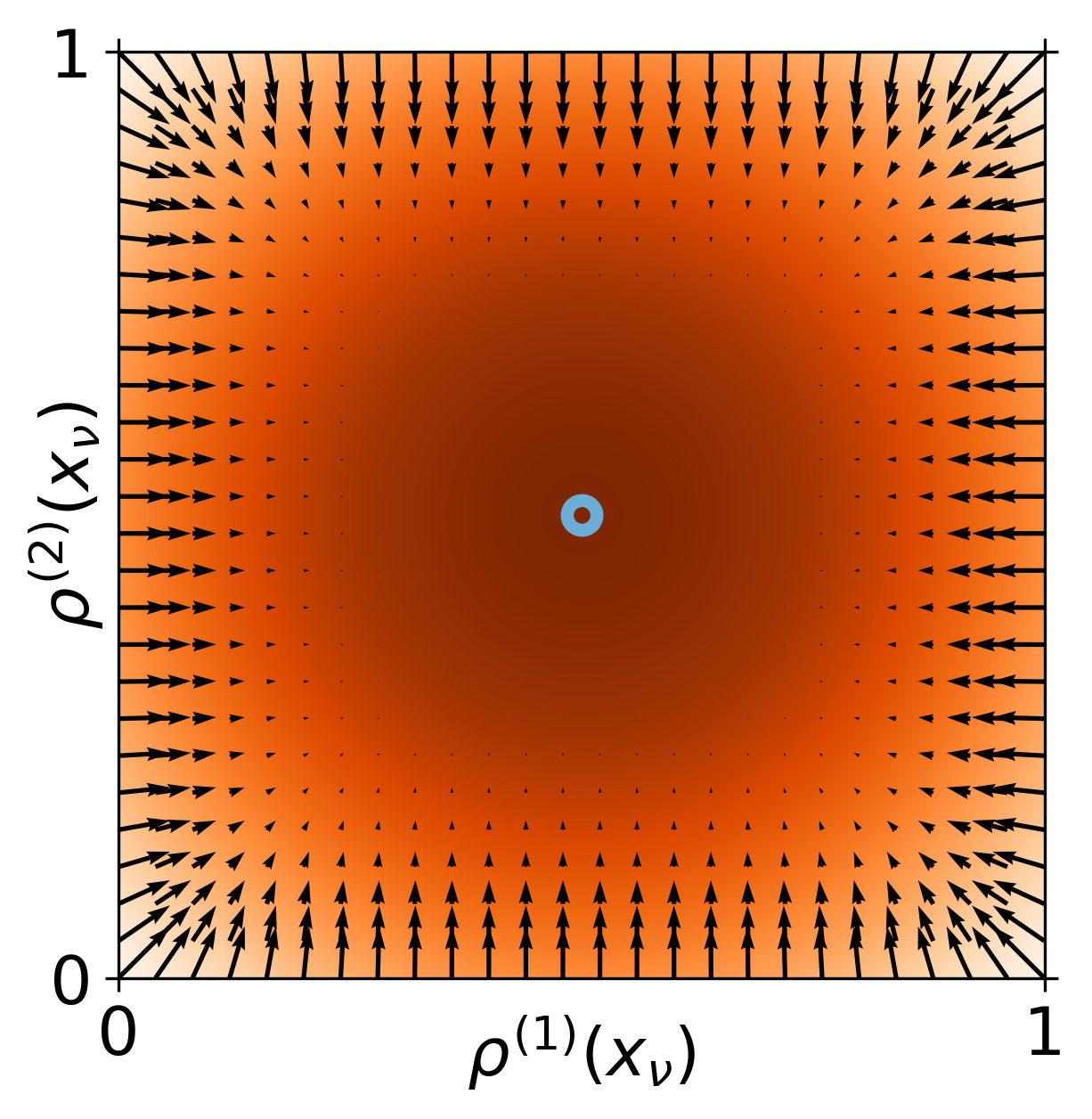}
    }
    \subfloat[$D^{(ii)}=1$, $D^{(12)}=0$, $p=2$]{
    \includegraphics[height=3.85cm]{2_point_D11=1.0_D12=0.0_D22=1.0_beta=1.0_q=2.0_colorbar=False_size=4.5cm.png}
    }
    \subfloat[$D^{(ii)}=1$, $D^{(12)}=0$,\\$p=5$]{
    \includegraphics[height=3.85cm]{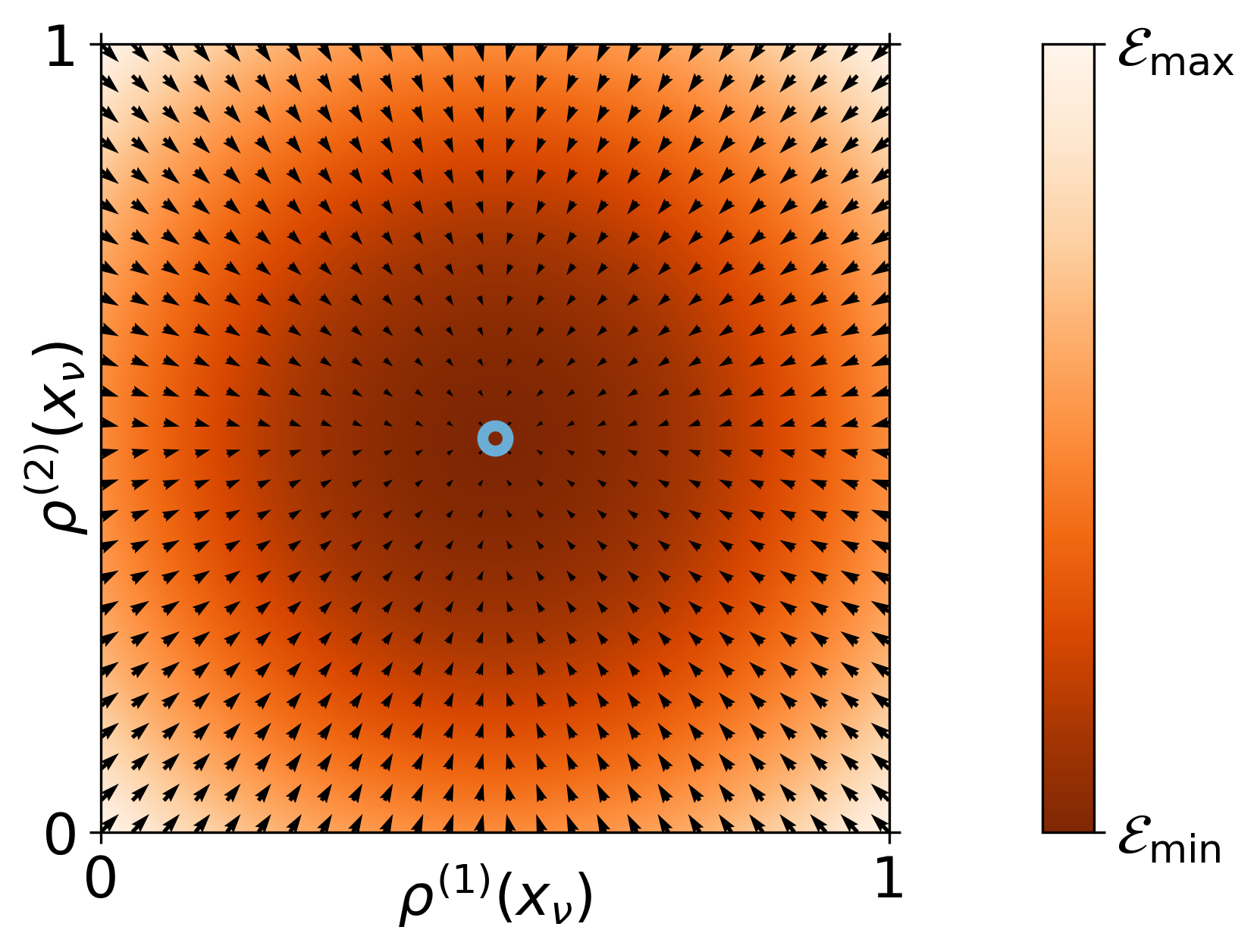}
    }
    \caption{Fixing $\mathfrak{m}^{(i)}_{\iota\kappa}=\rho^{(i)}_\iota$, $D^{(11)}=D^{(22)}=1$, $D^{(12)}=0$ and $\beta^{(1)}=\beta^{(2)}$, we display the energy landscapes (heat map), stationary states (light blue circles), and dynamics (quivers) for varying $p$.}
    \label{fig:two-point_vary:p}
\end{figure}

\begin{remark}[Visualization]\label{rem:two-point_figs}
In Figures \ref{fig:two-point_self:att_vary:cross}  - \ref{fig:two-point_vary:p}, we illustrate the effects of varying different parameters of the two-point model on the energy landscapes (heat maps), stationary states (light blue circles) and corresponding energy level sets (light blue lines), as well as dynamics (quivers) of the two-point space. Here, the mass $\rho^{(1)}(x_\iota) = 1 - \rho^{(1)}(x_\kappa)$ is displayed on the x-axis, while the mass $\rho^{(2)}(x_\iota) = 1 - \rho^{(2)}(x_\kappa)$ is displayed on the y-axis. We focus only on the case of linear mobility $\mathfrak{m}^{(i)}_{\iota\kappa}=\rho^{(i)}_\iota$ for $i=1,2$.

\begin{itemize}
\item In Figure \ref{fig:two-point_self:att_vary:cross} we vary the cross-interactions while staying in the regime of strong attractive self-interactions ($D^{(11)},D^{(22)}<0$, $D^{(11)}D^{(22)}>\left(D^{(12)}\right)^2$). We observe that it the cross-interaction is attractive (A), the stationary states $\rhoup_{\mathrm{c}}$ are optimal. If the cross-interaction is zero (B), both $\rhoup_{\mathrm{c}}$ and $\rhoup_{\mathrm{d}}$ are optimal. If the cross-interaction is repulsive (C),  $\rhoup_{\mathrm{d}}$ is optimal. 

\item In Figure \ref{fig:two-point_vary:det} we compare the different regimes $D^{(11)}D^{(22)} \lesseqqgtr  \left(D^{(12)}\right)^2$ for repulsive (top row) and attractive (bottom row) self-interactions. In (A) and (D) the cross-interaction is stronger than the self-interactions, which leads to $\rhoup_{\mathrm{d}}$ being optimal, both for repulsive and attractive self-interactions. In (B) the cross- and self-repulsions balance each other out, leading to a family of stationary states of the form $\rhoup_{\mathrm{a}_r}$ on the line $\rho^{(1)}(x_\iota)=1-\rho^{(2)}(x_\iota)$. Conversely, in (E) the cross-repulsions and self-attractions balance each other out on the line $\rho^{(1)}(x_\iota)=\rho^{(2)}(x_\iota)$, thus also leading to a family of stationary states of the form $\rhoup_{\mathrm{a}_r}$. Outside this line, the self-attractions and cross-repulsions draw the dynamics towards $\rhoup_{\mathrm{d}}$. In (C) the self-repulsions dominate the energy landscape, thus pushing the dynamics towards the only stationary state, $\rhoup_{\mathrm{a}}$. Conversely, in (F) the self-attractions dominate, leading to $\rhoup_{\mathrm{c}}$ and $\rhoup_{\mathrm{d}}$ being asymptotically stable, while $\rhoup_{\mathrm{a}}$, $\rhoup_{\mathrm{b}_1}$ and $\rhoup_{\mathrm{b}_2}$ are unstable.

\item In Figure \ref{fig:two-point_rotate} we vary the strength of the self-interactions, which have opposite signs, while fixing repulsive cross-interactions. In (A) the self-interaction of species $1$ is repulsive, leading to $\rhoup_{\mathrm{b}_1}$ being asymptotically stable. Conversely, in (C) the self-interaction of species $2$ is repulsive, leading to $\rhoup_{\mathrm{b}_2}$ being asymptotically stable. In (B) both self-interactions vanish, leading to $\rhoup_{\mathrm{d}}$ being asymptotically stable, similar to Figure \ref{fig:two-point_vary:det}(A,D).

\item In Figure \ref{fig:two-point_vary:beta} we display the effects of varying $\beta^{(1)}$ and $\beta^{(2)}$. We observe that the energy landscape and stationary states remain unchanged, while the ratio between $\partial_t\rho^{(1)}$ and $\partial_t\rho^{(2)}$ changes with the ratio $\beta^{(1)}$ and $\beta^{(2)}$, achieving symmetry between the species, when $\beta^{(1)}=\beta^{(2)}$.

\item In Figure \ref{fig:two-point_vary:p} we illustrate the effects of varying $p$. Similar to Figure \ref{fig:two-point_vary:beta}, we observe that the energy landscape and stationary states remain unchanged, while $\partial_t\rho^{(1)}$ and $\partial_t\rho^{(2)}$ change with $p$.
\end{itemize}
\end{remark}

\subsection{The effects of the cut-off function $\eta$ on a 3-point space}

Consider three points on a straight line, i.e., $\mu=\delta_{x_1}+\delta_{x_2}+\delta_{x_3}$, where $x_1=-r_1,x_2=0,x_3=r_2$ for some $r_1,r_2 > 0$. Let $K^{(ii)}(x,y)=\abs{x-y}$ for $i=1,2$ and  $K^{(12)}(x,y)=-\alpha\abs{x-y}$, for some $\alpha\ge0$. Besides, assume  $\mathfrak{m}^{(i)}_{\iota\kappa}=\rho^{(i)}_\iota$ for $i=1,2$ and $\iota,\kappa =1,2,3$. It is easy to check that the energy minimum is achieved, when one species is aggregated at $x_1$, while the other is aggregated at $x_3$.

However, when we impose the initial distribution $\rho^{(1)}_1=\rho^{(1)}_3=0$, $\rho^{(1)}_2=1$ and $\rho^{(1)}_1=1-\rho^{(1)}_3=\frac{1}{2}(1+\delta)$, $\rho^{(2)}_2=0$, for some $\delta \in (0,1)$, then the velocities given by
\baqs
v^{(i)}_{\iota\kappa}=\sum_{k=1}^2\sum_{\lambda=1}^3\left(K^{(ik)}_{\iota\lambda}-K^{(ik)}_{\kappa\lambda}\right)\rho^{(k)}_\lambda,
\eaqs
can be calculated to
\baqs
&v^{(1)}_{12}=r_1(\alpha\delta+1), &&v^{(1)}_{13} =r_1-r_2+\alpha\delta(r_1+r_2), &&&v^{(1)}_{23} =r_2(\alpha\delta-1),\\
&v^{(2)}_{12} =r_1(-\alpha-\delta), &&v^{(2)}_{13} =\alpha r_2-\alpha r_1-\delta(r_1+r_2), &&&v^{(2)}_{23} =r_2(\alpha-\delta).
\eaqs
Based on this, the time derivatives become
\baqs
\partial_t\rho^{(i)}_\iota=\frac{1}{\beta^{(i)}}\sum_{\kappa=1}^3\eta_{\iota\kappa}\left[(v^{(i)}_{\iota\kappa})_-\rho^{(i)}_\kappa-(v^{(i)}_{\iota\kappa})_+\rho^{(i)}_\iota\right],
\eaqs
whence
\baq\label{eq:3-point-dtrho}
\partial_t\rho^{(1)}_1&=\left[r_1(1+\alpha\delta)\right]_-\eta_{12},\\
\partial_t\rho^{(1)}_2&=-\left[r_1(1+\alpha\delta)\right]_-\eta_{12}-\left[r_2(1-\alpha\delta)\right]_-\eta_{23},\\
\partial_t\rho^{(1)}_3&=\left[r_2(1-\alpha\delta)\right]_-\eta_{23},\\
\partial_t\rho^{(2)}_1&=\frac{1}{2\beta}\left[\alpha (r_1-r_2)+\delta(r_1+r_2)\right]_-(1-\delta)\eta_{13}\\
&-\frac{1}{2\beta}\left[\alpha (r_1-r_2)+\delta(r_1+r_2)\right]_+(1+\delta)\eta_{13}-\frac{1}{2\beta}\left[r_1(\alpha+\delta)\right]_-(1+\delta)\eta_{12},\\
\partial_t\rho^{(2)}_2&=\frac{1}{2\beta}\left[r_1(\alpha+\delta)\right]_-(1+\delta)\eta_{12}+\frac{1}{2\beta}\left[r_2(\alpha-\delta)\right]_-(1-\delta)\eta_{23},\\
\partial_t\rho^{(2)}_3&=\frac{1}{2\beta}\left[\alpha (r_1-r_2)+\delta(r_1+r_2)\right]_+(1+\delta)\eta_{13}\\
&-\frac{1}{2\beta}\left[\alpha (r_1-r_2)+\delta(r_1+r_2)\right]_-(1-\delta)\eta_{13}-\frac{1}{2\beta}\left[r_2(\alpha-\delta)\right]_-(1-\delta)\eta_{23}.
\eaq
Let us consider some interesting particular cases of this situation and look at the effect of different supports of $\eta$.

\noindent
\textbf{Cutting off the interaction between $x_1$ and $x_3$.}
First, consider the situation, where $\eta(x,y)=\mathbb{1}_{\left\{\abs{x-y}<r\right\}}$ for some $r< r_1+r_2$. By \eqref{eq:3-point-dtrho} we see that then $\partial_t\rho^{(1)}=\partial_t\rho^{(2)} = 0$ if $\abs{\delta}\leq \alpha\land \alpha^{-1}$, i.e., we have a family of energetically non-optimal stationary states.

On the other hand, when $\eta_{13}>0$ we require $\delta=\alpha\frac{r_2-r_1}{r_2+r_1}$ together with $\abs{\delta}\leq \alpha\land \alpha^{-1}$ to obtain a stationary state, two conditions which are mutually exclusive for some choices of $r_1$, $r_2$ and $\alpha$.

In particular, we see that even for very weak cross-repulsion, i.e., for $\alpha\ll 1$, we may not obtain aggregation of both species in $x_2$ as one would observe it for a particle system with strong self-attraction and weak cross-repulsion.

\noindent
\textbf{Cutting off $x_3$.}
Assume $r_2>r_1$ and let $\eta(x,y)=\mathbb{1}_{\left\{\abs{x-y}<r\right\}}$ for some $r_1<r<r_2$. 
Then, the interaction of the node $x_3$ with $x_1$ and $x_2$ is completely cut off. Thus, the only remaining condition for our distribution to be stationary is $\delta\ge-\alpha\land-\alpha^{-1}$, leading to an even larger family of stationary states.

\subsection{Geometric effects on the dynamic in a 4-Point space}
In order to elucidate the geometric effects let us consider the following, rather minimal, example of a four point space. It is given by the base measure $\mu = \sum_{\iota=1}^4\delta_{x_\iota}$, which is embedded in $\R^2$. Here the nodes are  given by
\begin{align*}
    x_1 = (0,0), \quad x_2 = (1,0), \quad x_3=(1,1), \quad \text{and} \quad x_4 = (\varepsilon, 1),
\end{align*}
with $\varepsilon \in [0,1]$. For the sake of  the argument let us assume the two species are concentrated on the two neighboring nodes such that, initially, 
\begin{align*}
    \rho_0^{(1)} = \delta_{x_4}, \qquad \text{and} \qquad \rho_0^{(2)} = \delta_{x_1},
\end{align*}
respectively. For simplicity let $\mathfrak{m}^{(i)}_{\iota\kappa}=\rho^{(i)}_\iota$ for $i=1,2$, $\iota,\kappa =1,2,3$, and let $p=2$. In addition, we assume attractive intra-specific interactions of the form
\begin{align*}
    K^{(ii)} (x,y) = |x-y|_1,\quad  i=1,2,
\end{align*}
and mutually repulsive inter-specific interactions of the form 
\begin{align*}
    K^{(12)} (x,y) = - \alpha |x-y|_1,
\end{align*}
for some $\alpha \geq 0$. Here, $|x|_1 = |x^{1}|  + |x^2|$. We also assume that the cross-interaction is symmetric, i.e., $\betaup=(1,1)$. Intuitively, for this type of cross-interactions, we expect that the second species pushes the first one onto the opposing node, $x_3$. However, we now show that this is not the case. On the contrary, let us compute
\begin{align}
    \label{eq:change_of_mass_in_node_3}
    \partial_t \rho^{(1)}_3
    &= \sum_{\lambda=1}^4 (v^{(1)}_{3\lambda})_- \rho^{(1)}_\lambda - (v^{(1)}_{3\lambda})_+ \rho^{(1)}_3.
\end{align}
Recalling that, initially,
\begin{align*}
    v^{(1)}_{\iota\kappa} &= \sum_{\lambda=1}^4 \left[K^{(11)}_{\iota\lambda} - K^{(11)}_{\kappa\lambda}\right] \rho^{(1)}_\lambda + \sum_{\lambda=1}^4 \left[K^{(12)}_{\iota\lambda} - K^{(12)}_{\kappa\lambda}\right]\rho^{(2)}_\lambda\\
    &=\left[K^{(11)}_{\iota4} - K^{(11)}_{\kappa4}\right] \rho^{(1)}_4 +  \left[K^{(12)}_{\iota1} - K^{(12)}_{\kappa1}\right]\rho^{(2)}_1\\
    &=\left[K^{(11)}_{\iota4} - K^{(11)}_{\kappa4}\right] +  \left[K^{(12)}_{\iota1} - K^{(12)}_{\kappa1}\right],
\end{align*}
as all mass is concentrated on the two nodes, $x_1, x_4$. With the above choice of interaction potentials, we obtain
\begin{align}
    \label{eq:velo_4point_space}
   v^{(1)}_{\iota\kappa} = |x_\iota-x_4| - |x_\kappa-x_4| - \alpha |x_\iota-x_1| + \alpha |x_\kappa-x_1|.
\end{align}
Substituting \eqref{eq:velo_4point_space} into  \eqref{eq:change_of_mass_in_node_3} this yields
\begin{align*}
    \partial_t \rho^{(1)}_3 &= \sum_{\iota=1}^{4} (v^{(1)}_{3\iota})_- \rho^{(1)}(x_\iota) - (v^{(1)}_{3\iota})_+ \rho^{(1)}_3=(v^{(1)}_{34})_-\\ &=[1 - \varepsilon - 2\alpha + \alpha (1+\varepsilon)]_-=[(1-\alpha)(1-\varepsilon)]_-.
\end{align*}
\begin{figure}[ht!]
    \captionsetup[subfloat]{labelformat=empty}
    \centering
    \subfloat[$t=0$]{
    \includegraphics[width=0.18\textwidth]{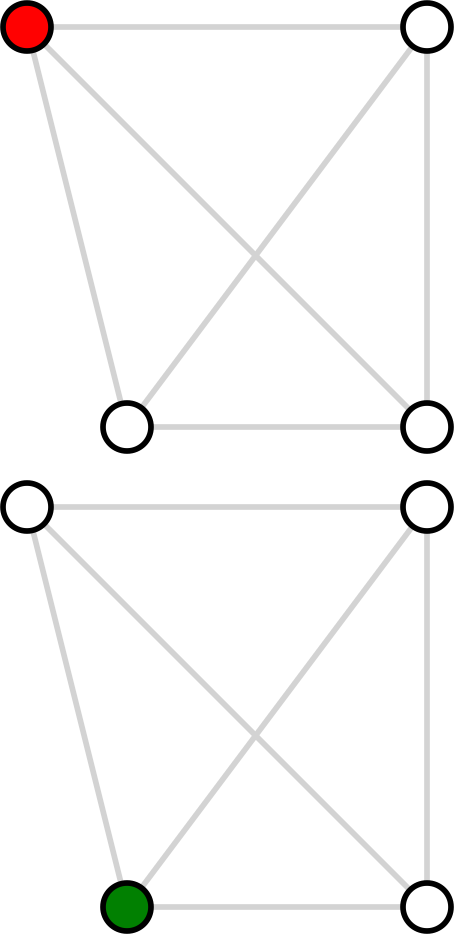}
    }
    \subfloat[$t=4$]{
    \includegraphics[width=0.18\textwidth]{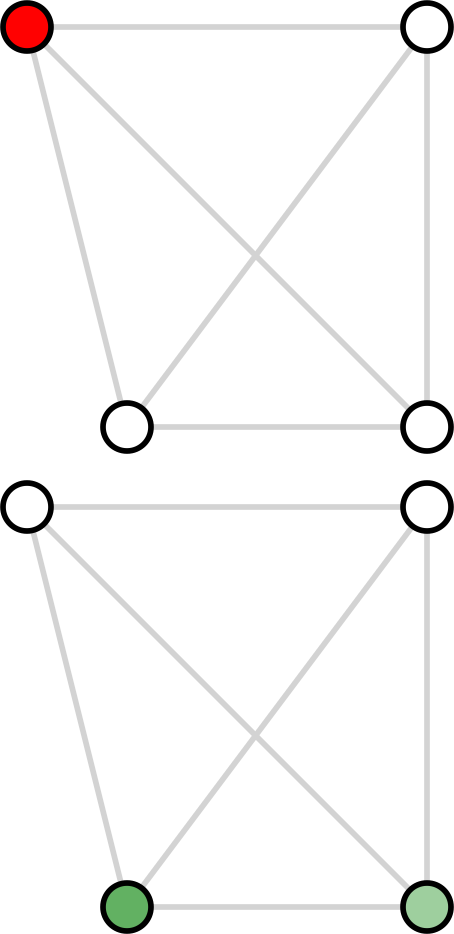}
    }
    \subfloat[$t=7$]{
    \includegraphics[width=0.18\textwidth]{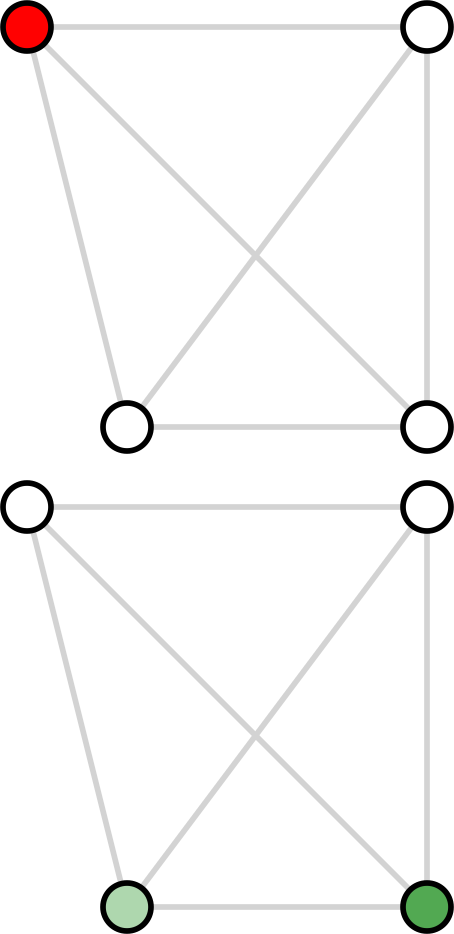}
    }
    \subfloat[$t=9$]{
    \includegraphics[width=0.18\textwidth]{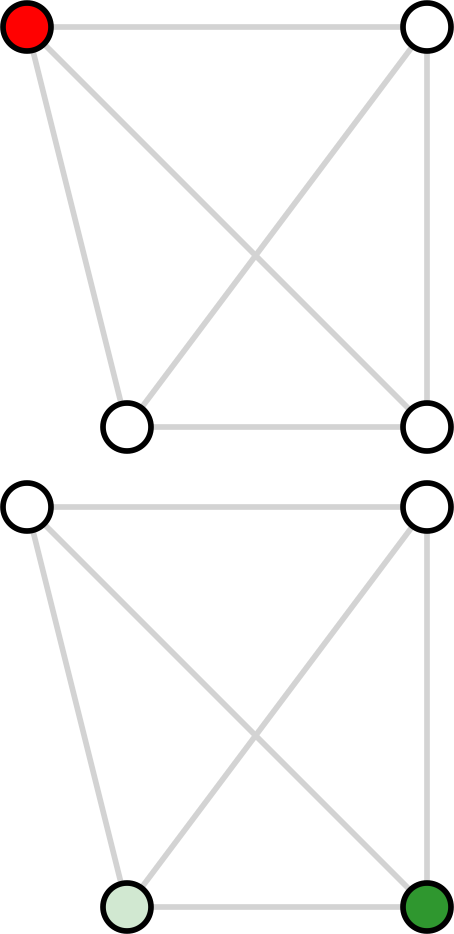}
    }
    \subfloat[$t=14$]{
    \includegraphics[width=0.18\textwidth]{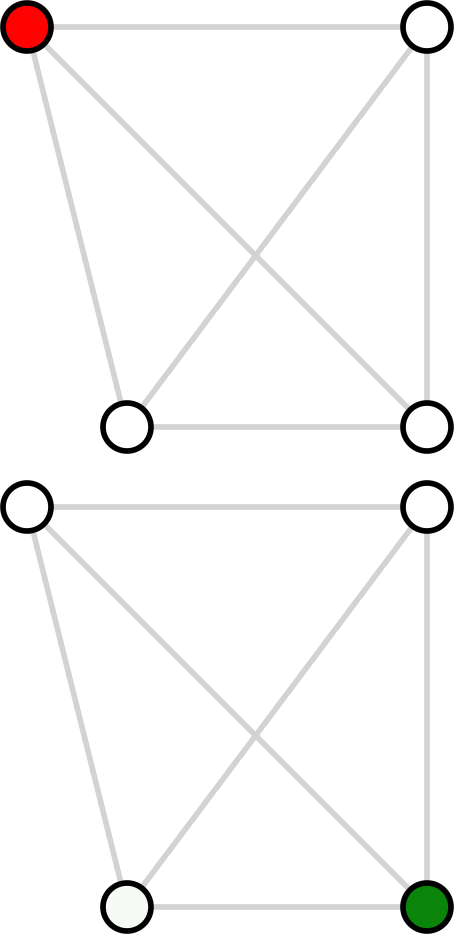}
    }
    \caption{We display the dynamics of the non-symmetric ($\varepsilon \in (0,1)$) four-point space with the initial mass of $\rho^{(2)}$ concentrated on $x_4$ (green, bottom row) and the initial mass of $\rho^{(1)}$ concentrated on $x_1$ (red, top row). Only if the cross-repulsion strength is strong enough, $\alpha>1$, can the second species drive the first one out of the node it is inhabiting. Against any intuition based on the associated particle system, this is not the case if the repelling cross-interaction is weak.}
    \label{fig:fourpoint_epsilon}
\end{figure}
In particular, for $0 \leq \alpha < 1$ no mass flows, even if the node $\rho^{(1)}$ is concentrated on, $x_4$, is moved closer to $x_3$. In fact, only if the cross repulsion is strong enough, i.e., $\alpha > 1$, mass  flows onto $x_3$. What is more, this remains true even if $\varepsilon = 0$, a scenario in which the dynamics of the associated particle system would lead to two particles, one for each species, that move into opposing directions. 

A similar computation reveals that, initially,
\begin{align*}
    \partial_t \rho^{(2)}_1 =  - (-1 + \alpha  - 2 \alpha \epsilon )_+ - (-2 - \alpha \epsilon  + \alpha  - \epsilon)_+  - (-\epsilon - \alpha \epsilon)_+.
\end{align*}
This shows that the second species remains concentrated on the node $x_1$ whenever $0\leq \alpha \leq  1$ and $\epsilon \geq 0$. Moreover, for cross-interaction strengths that are stronger than the self-interactions, $\alpha>1$, it is still possible for the second species to remain stationary; however, now,  the size of $\epsilon$ begins to play a role made precise by the above equation. For instance, if $\alpha=3/2$ and $\epsilon=1/4$, the second species remains located on the first node while it pushes the first species from the fourth node to the third node, which also follows from the form of $\partial_t \rho^{(1)}_3$, initially. The behavior is displayed in Figure 
\ref{fig:fourpoint_epsilon}. If $\epsilon$ is sufficiently small or zero but $\alpha>1$, the above equation yields a change in density of the second species on $x_1$. Similarly, the first species will escape to node $x_3$, which can be seen in Figure \ref{fig:fourpoint_square}.
This relatively minimal example goes to show that the dynamics on the graph are very different from the particle system associated to the energy functional \eqref{eq:particle}.

\begin{figure}[h]
    \captionsetup[subfloat]{labelformat=empty}
    \centering
    \subfloat[$t=0$]{
    \includegraphics[width=0.18\textwidth]{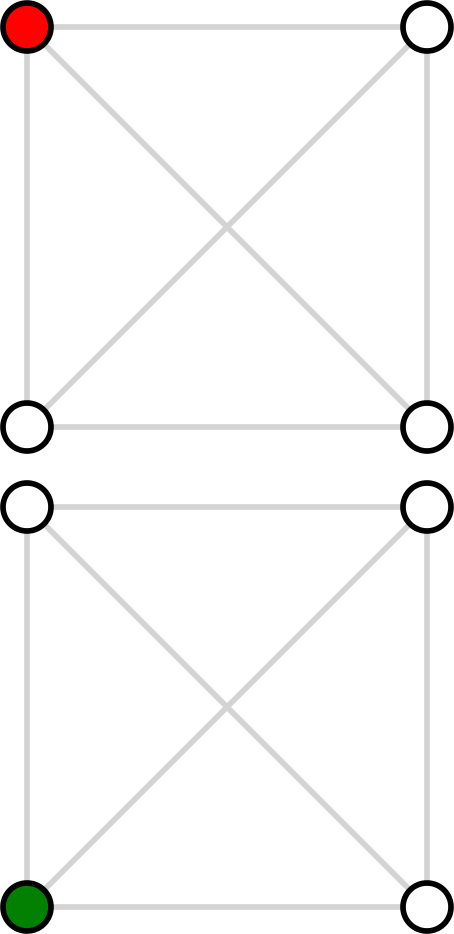}
    }
    \subfloat[$t=6$]{
    \includegraphics[width=0.18\textwidth]{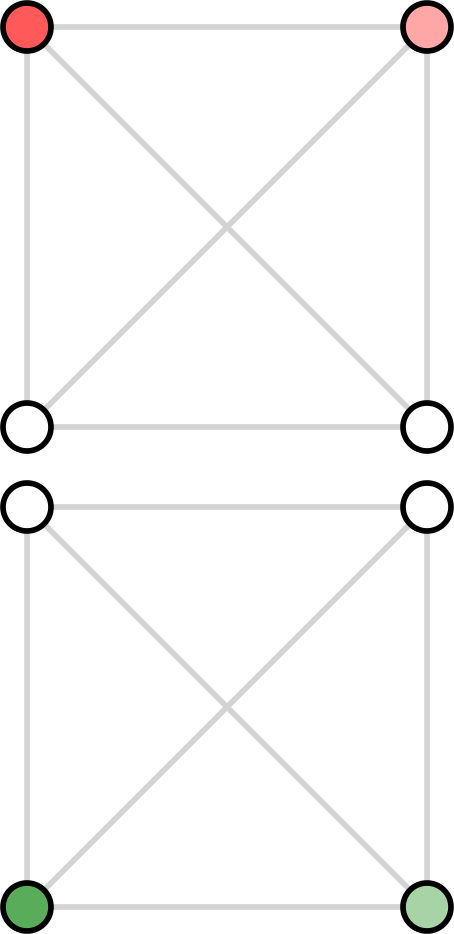}
    }
    \subfloat[$t=11$]{
    \includegraphics[width=0.18\textwidth]{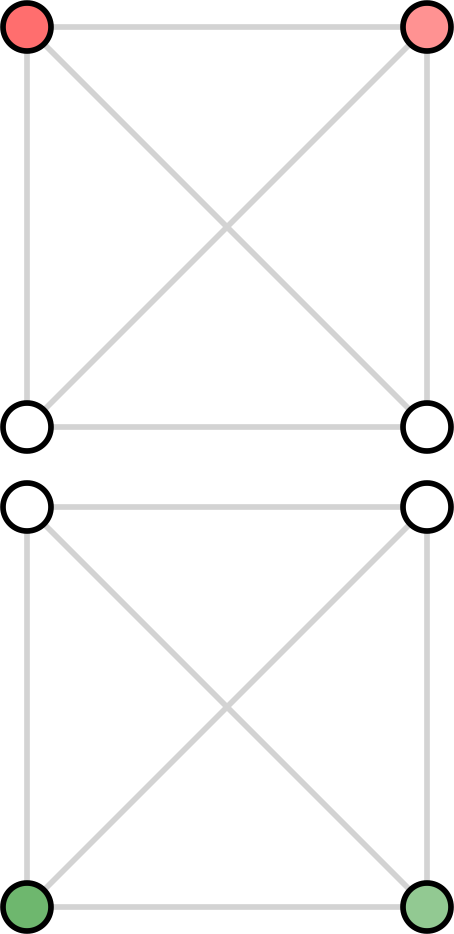}
    }
    \subfloat[ $t=17$]{
    \includegraphics[width=0.18\textwidth]{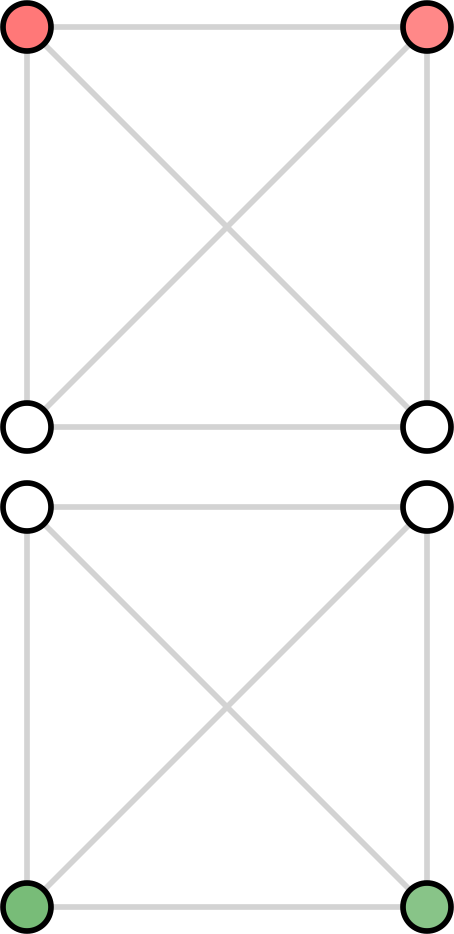}
    }
    \subfloat[$t=20$]{
    \includegraphics[width=0.18\textwidth]{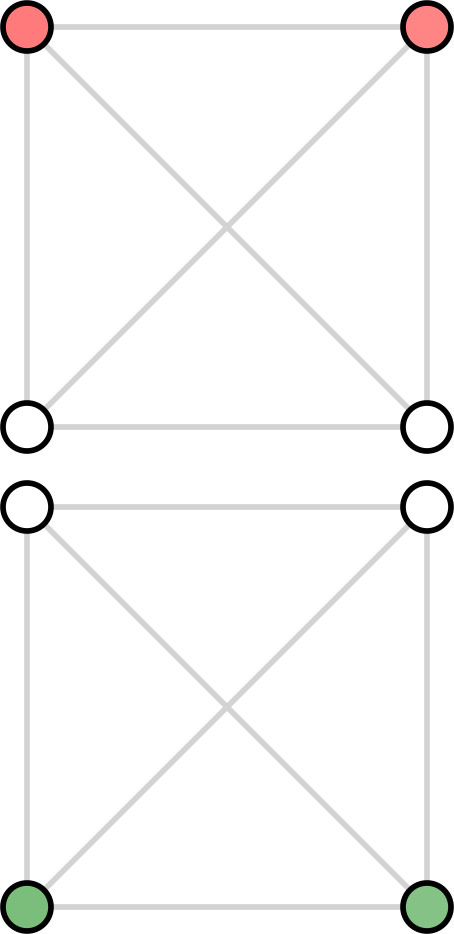}
    }
    \caption{In the symmetric setting, two species that are initially concentrated on neighbouring vertices of the four-point space will drive out one another in the case of sufficiently strong cross-repulsion, $\alpha>1$. Only if the cross-repulsion strength is strong enough, $\alpha>1$, can the second species drive the first one out of the node it is inhabiting and vice versa. This phenomenon is quite counter-intuitive as both species are self-attractive. Nonetheless, the mass of both species is evenly split. Against any intuition based on the associated particle system, this is not the case if the repelling cross-interaction is weak.}
    \label{fig:fourpoint_square}
\end{figure}

\begin{figure}[h]
    \captionsetup[subfloat]{labelformat=empty}
    \centering
    \subfloat[$t=0$]{
    \includegraphics[width=0.18\textwidth]{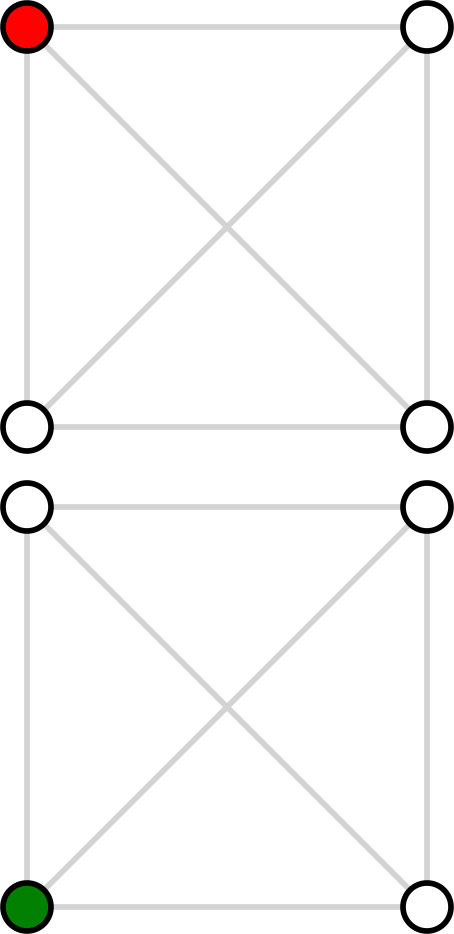}
    }
    \subfloat[$t=1$]{
    \includegraphics[width=0.18\textwidth]{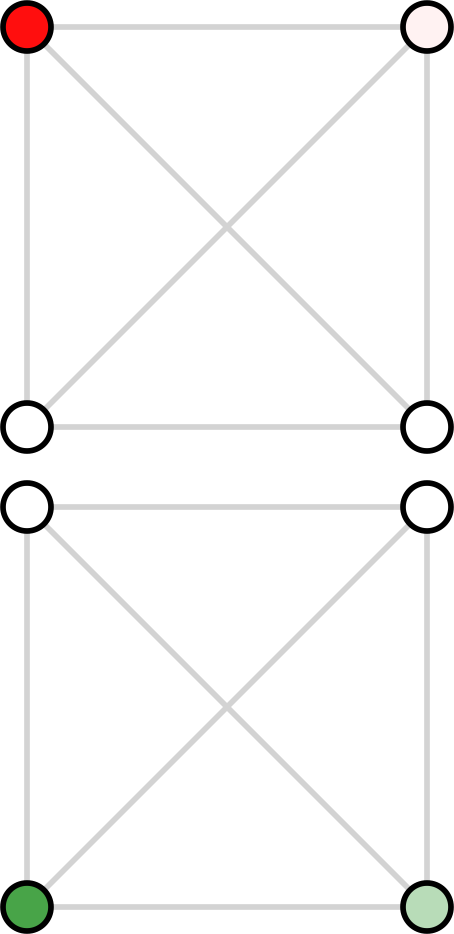}
    }
    \subfloat[$t=2$]{
    \includegraphics[width=0.18\textwidth]{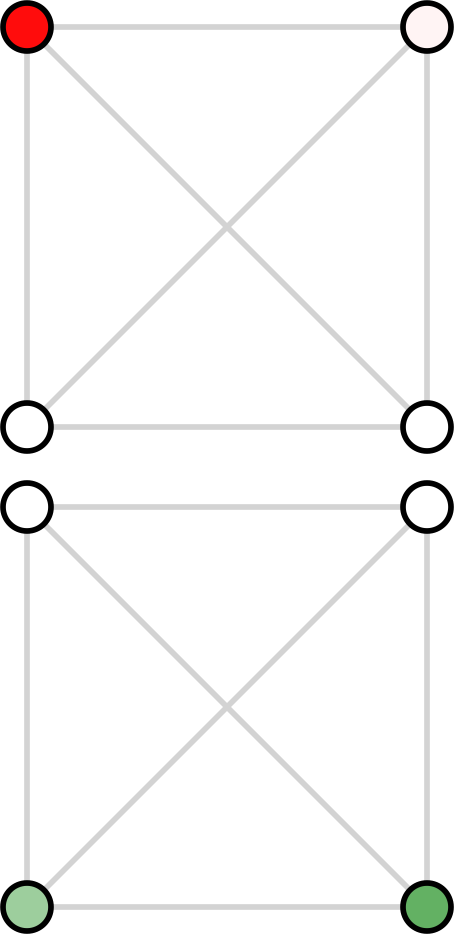}
    }
    \subfloat[$t=3$]{
    \includegraphics[width=0.18\textwidth]{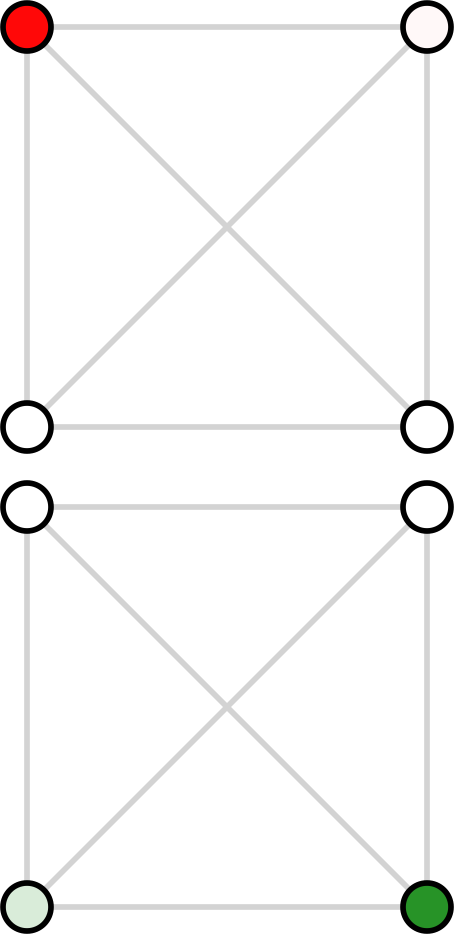}
    }
    \subfloat[$t=4$]{
    \includegraphics[width=0.18\textwidth]{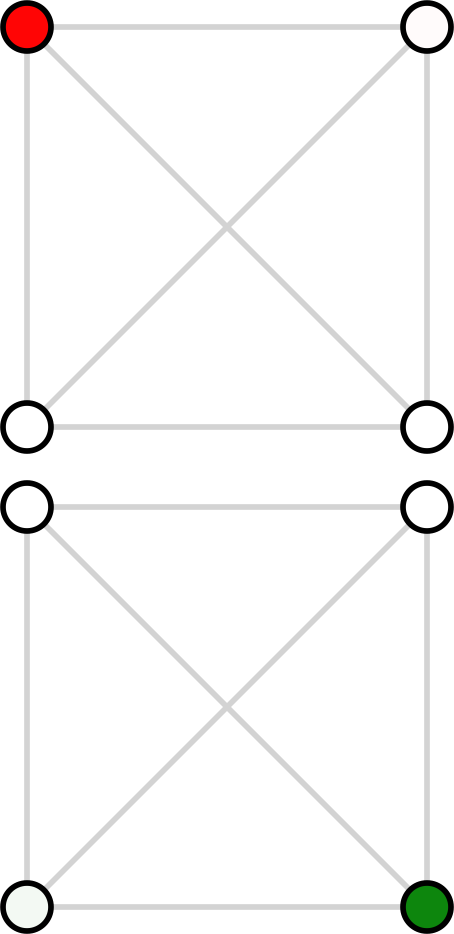}
    }
    \caption{In the situation with symmetric nodes and non-symmetric cross-repulsion (i.e., $\beta \ne 1$), one of the two species which are initially concentrated on neighboring edges of the four-point space will drive out the other in the case of sufficiently strong cross-repulsion, $\alpha>1$. Again, against particle system intuition, this is not the case if the repelling cross-interaction is weak.}
    \label{fig:fourpoint_square_beta<1}
\end{figure}

\newpage
\subsection{Pattern formation on large graphs}
In the preceding section we have studied two species whose intra-specific interaction is of attractive nature while the cross-interaction is purely repulsive. With the intuition gained in the first example, let us consider a larger graph of $10$-by-$10$ nodes arranged in a two-dimensional array with $m(r,s)=r$ and $p=2$. Focusing only on kernels of the form $K^{(ik)}(x,y) = \tilde{K}^{(ik)}(x-y)$, we assume the cross-interactions to be repulsive and to be given by the potentials 
\begin{align*}
    \tilde{K}^{(12)}(x) = (10 - 20|x|)_+.
\end{align*}
Potentials with a compact support like the cross-interaction potential are typically chosen whenever interactions are nonlocal, but the sensing radius is imposed. As for the above choice, this sensing radius is $r=1/2$, a distance beyond which no repulsive interactions are present. 

Concerning the self-interactions, we choose the potentials proposed by Kolokolnikov and Evers, cf. \cite{evers2017equilibria}, which are of the form
\begin{align*}
    \tilde{K}^{(ii)}(x) = - a \log(|x|) + b \frac{|x|^2}{2},
\end{align*}
i.e., consisting of a locally repulsive part and a globally attractive part, respectively. This potential has the remarkable property that it leads to compactly supported steady states of constant density. In fact, for a single species, there holds
\begin{align*}
    \rho^{(i)}\nabla \tilde{K}^{(ii)}\ast \rho^{(i)} = 0,
\end{align*}
or, on the support of $\rho^{(i)}$, we have
\begin{align*}
    \nabla \tilde{K}^{(ii)} \ast \rho^{(i)} = 0.
\end{align*}
Taking the divergence yields
\begin{align*}
    0 
    = \Delta \tilde{K}^{(ii)} \ast \rho^{(i)}
    = -\frac{a}{2\pi} \delta \ast \rho^{(i)} + b
    = b - \frac{a}{2\pi} \rho^{(i)},
\end{align*}
i.e.,
$$ 
    \rho^{(i)} = 2\pi b / a,
$$
where $\rho^{(i)}>0$. For our purpose we choose
\begin{align}
    \label{eq:KEF_self_terms}
    \tilde{K}^{(ii)}(x) = -\log(|x|) - \frac{|x|^2}{400}.
\end{align}
In Figure \ref{fig:10by10graph}, we observe that starting from randomly distributed mass on the nodes ($t=0$), the mass immediately begins to aggregate, ($t=1$ and $t=10$). As time passes the profile approaches the stationary profile ($t=5000$).
\begin{figure}[ht!]
    \captionsetup[subfloat]{labelformat=empty}
    \captionsetup[subfloat]{labelformat=empty}
    \centering
    \subfloat[$t=0$]{
    \includegraphics[width=0.18\textwidth]{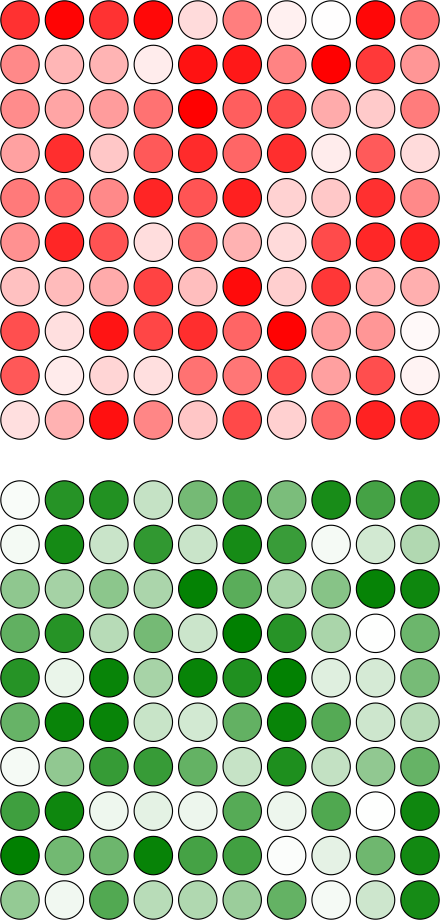}
    }
    \subfloat[$t=1$]{
   \includegraphics[width=0.18\textwidth]{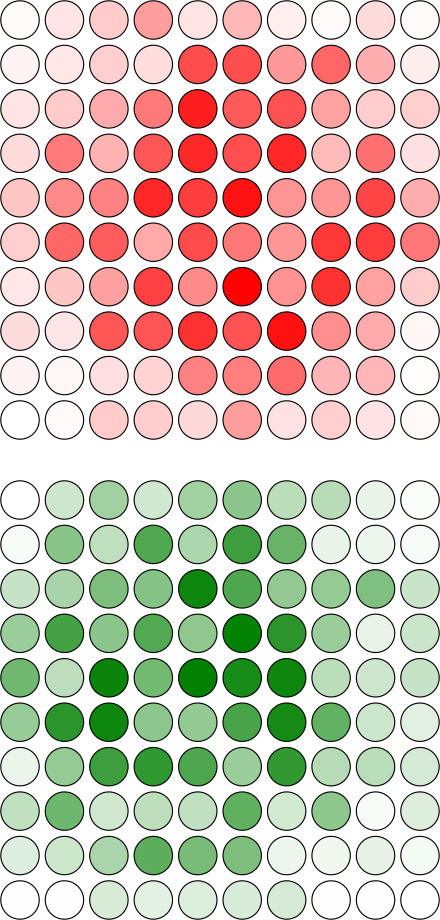}
    }
    \subfloat[$t=10$]{
   \includegraphics[width=0.18\textwidth]{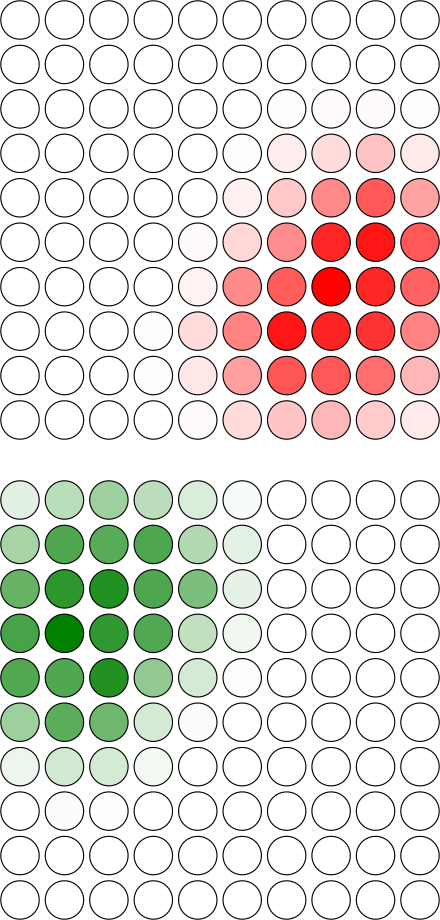}
    }
    \subfloat[$t=200$]{
    \includegraphics[width=0.18\textwidth]{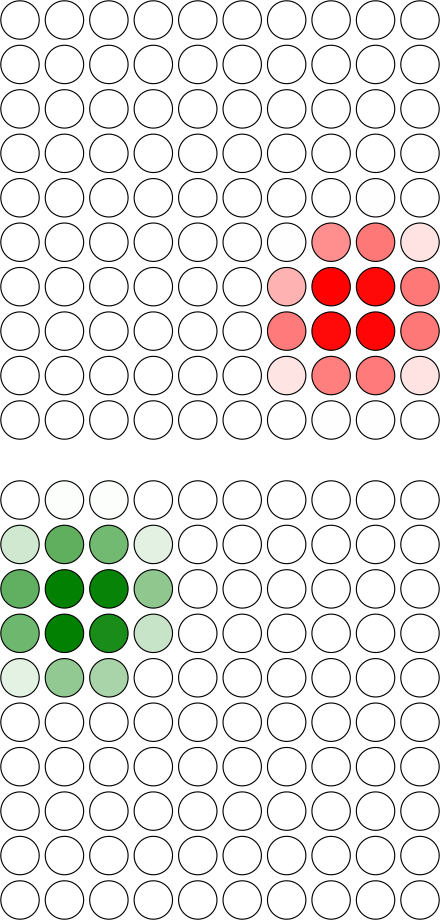}
    }
    \subfloat[$t=5000$]{
    \includegraphics[width=0.18\textwidth]{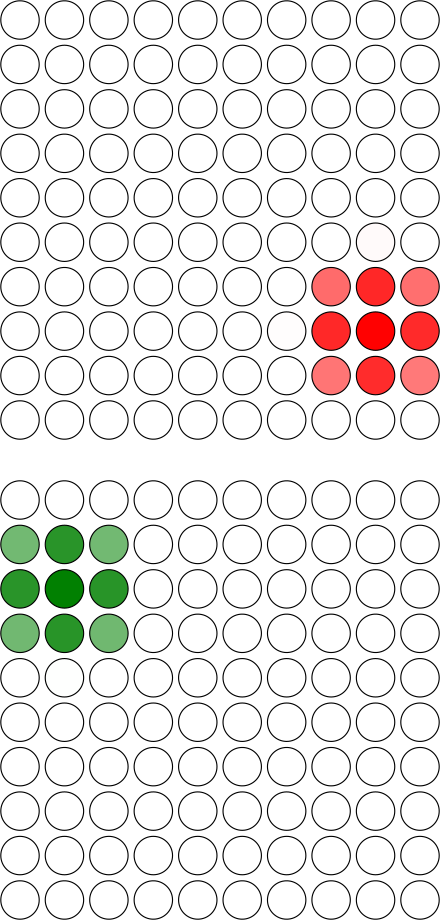}
    }
    \caption{Short-range repulsive, long-range attractive self-interactions and repulsive interactions with a sensing radius lead to segregation (right panel) of the initially randomly distributed species (left panel). The two species are color-coded in red and green, respectively.}
    \label{fig:10by10graph}
\end{figure}

Based on this numerical experiment we alter the self-interaction potential slightly in that we introduce a sensing radius, too. Instead of \eqref{eq:KEF_self_terms}, we truncate the self-interaction potentials and consider
\begin{align*}
    \tilde{K}^{(ii)}(x) = \left(-a \log(|x|) +  b |x|^2\right) \mathbb{1}_{\{|x|< s\}} + \left(-a \log(s) +  b s^2\right)\mathbb{1}_{\{|x| \geq s\}}
\end{align*}
where $a=1$, $b=1/400$, and the sensing radius is chosen as $0<s = 0.2$. Outside of the interaction region, $|x|>s$, the potential is extended by a constant. Concerning the cross-interaction terms, we use compactly supported, repulsive potentials of the form
\begin{align*}
    \tilde{K}^{(12)}(x) = (4 - 20|x|)_+,
\end{align*}
i.e., the cross-interaction radius is $s=0.5$. In Figure \ref{fig:25by25graph} we give a representative example of the dynamics these interaction rules give rise to. As in the previous example, we initialize both densities randomly. As time evolves, we observe a self-sorting phenomenon as regions of co-habitation disappear, and islands of individual occupancy emerge, cf. Figure \ref{fig:25by25graph}, $t=200$.
\begin{figure}[ht!]
    \captionsetup[subfloat]{labelformat=empty}
    \centering
    \subfloat[$t=0$]{
    \includegraphics[width=0.21\textwidth]{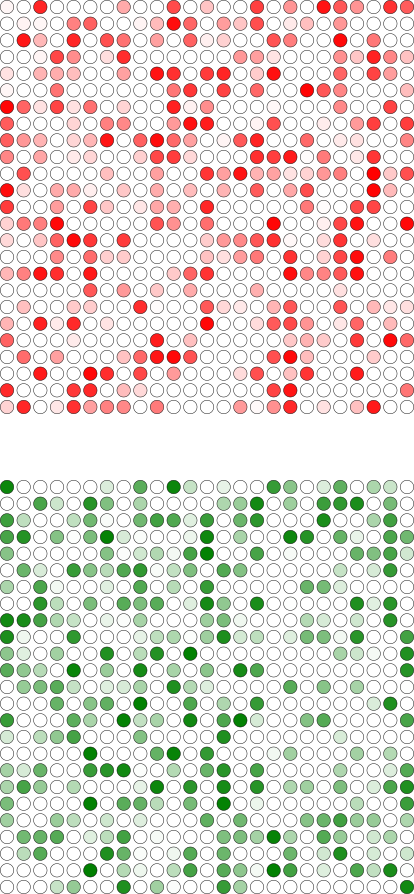}
    }\quad
    \subfloat[$t=50$]{
    \includegraphics[width=0.21\textwidth]{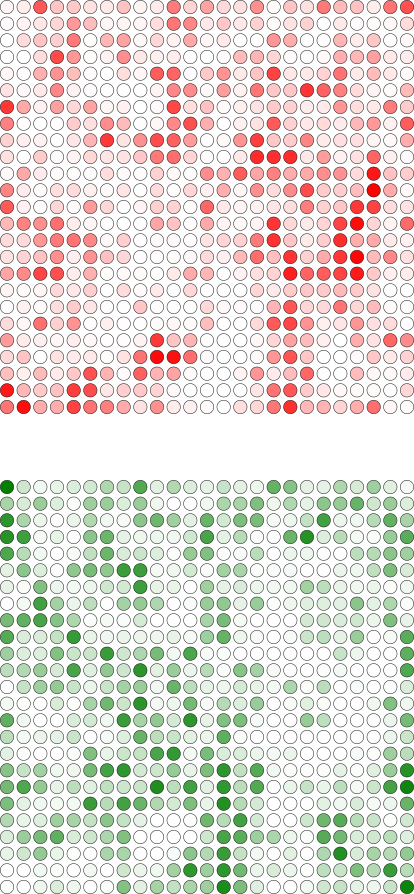}
    }\quad
    \subfloat[$t=100$]{
    \includegraphics[width=0.21\textwidth]{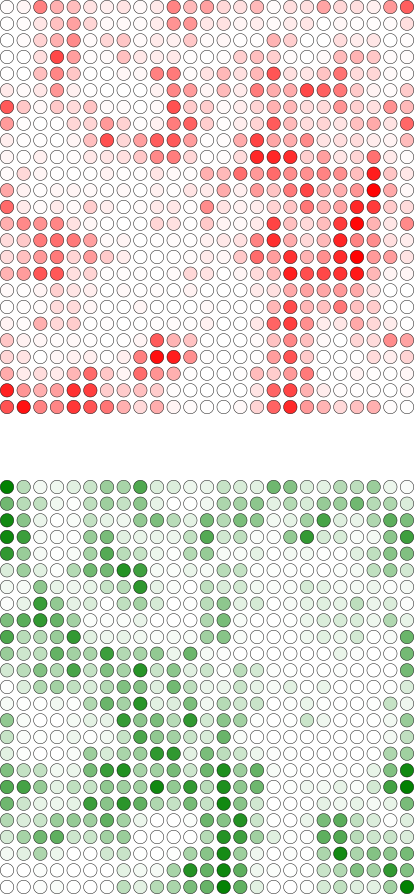}
    }\quad
    \subfloat[$t=200$]{
    \includegraphics[width=0.21\textwidth]{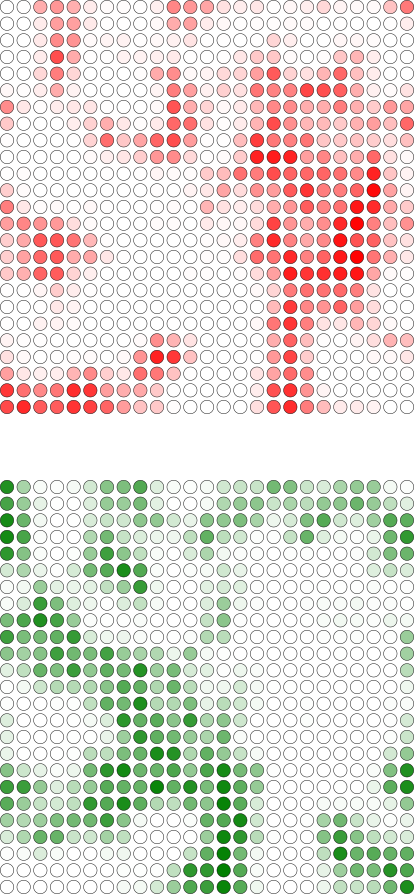}
    }    
    \caption{Starting from random initial data (left panel), the two species (red and green, respectively) have a sensing radius with respect to both, members of their own species and members of the opposing species. We observe emergence behavior in the form of self-sorting, i.e., the final configuration consists of two species with disjoint habitats.}
    \label{fig:25by25graph}
\end{figure}

\subsection{Nonlinear mobilities and nonquadratic exponents}
In this section, we want explore the effects of changing the mobility $m$ as well as the exponent $p=q/(q-1)$ in \eqref{eq:NL2CIE}. To this end, we study the case of $10$-by-$10$ nodes arranged in a two-dimensional lattice, which is fully connected and where the base measure is given by $\mu_\iota =1/20$ on all vertices. To observe the effects of different mobilities most clearly, we choose attractive interactions given by $\tilde{K}^{(ik)}(x) = 20(1-\exp(\abs{x}))$ for $i,k=1,2$. We consider the linear mobility $m(r,s)=r$ with $p\in\{1.65,2,5\}$ as well as the volume filling mobility $m(r,s)=r(1-s)$ for $p=2$. For the comparison of the different models, we choose the same random initial distribution for all cases.

In Figure \ref{fig:linear_10by10}, for all $p$ we observe aggregation of both species to a single vertex as time progresses. Comparing the different $p$, we find that an increase of $p$ slows down the convergence speed towards the stationary state. This is due of the power $q-1=1/(p-1)$ in \eqref{eq:NL2CIE}. If $q<1$ is small,  velocities near $0$ increase, speeding up the convergence near the stationary state. Conversely, if $q>1$ is large,  velocities near $0$ decrease further, slowing down the convergence near the stationary state.

In contrast to the aggregation in Figure \ref{fig:linear_10by10}, in Figure \ref{fig:volume-filling_10by10_p=2} we see that the volume filling mobility truncates the set of admissible states, eliminating configurations, which assign to much mass of any species $i\in\{1,2\}$ to a single vertex. Because $\mu_\iota =1/20$ for all $\iota\in\{1,\ldots,100\}$, the maximum threshold is $\rho^{(i)}_\iota =1/20$ for all $\iota\in\{1,\ldots,100\}$ and $i=1,2$. Due to the geometry of the graph and the kernel structure, the total mass of the stationary state in the volume filling case is spread to $21$ vertices.

\begin{figure}[ht!]
    \captionsetup[subfloat]{labelformat=empty}
    \centering
    $p=1.65$\vspace{-.2cm}\\
    \subfloat{
    \includegraphics[width=0.18\textwidth]{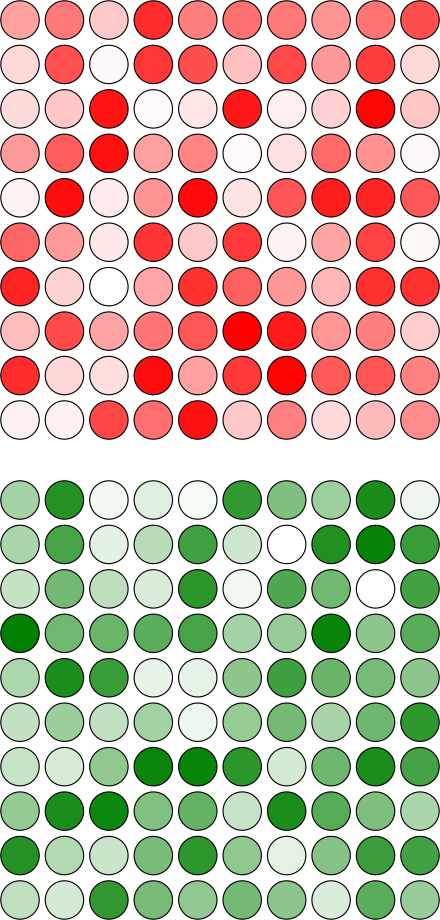}
    }
    \subfloat{
    \includegraphics[width=0.18\textwidth]{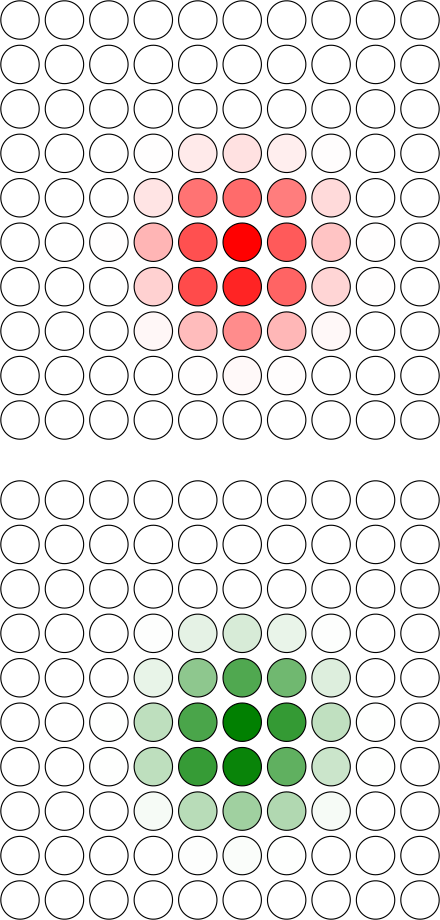}
    }
    \subfloat{
    \includegraphics[width=0.18\textwidth]{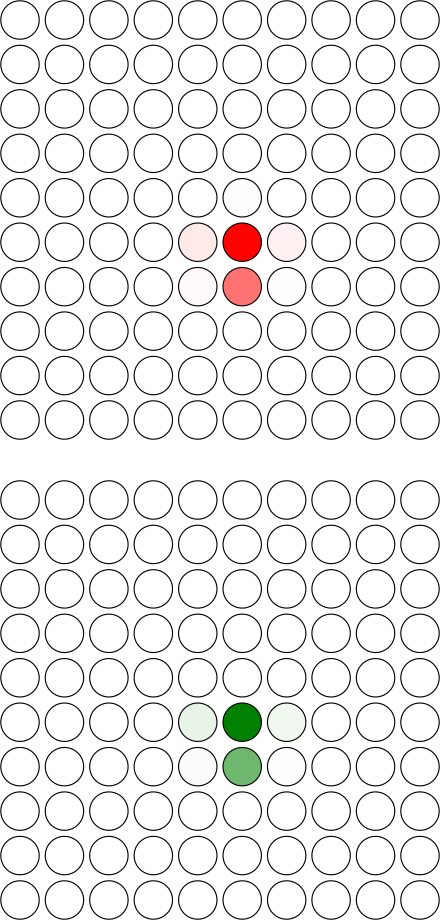}
    }
    \subfloat{
    \includegraphics[width=0.18\textwidth]{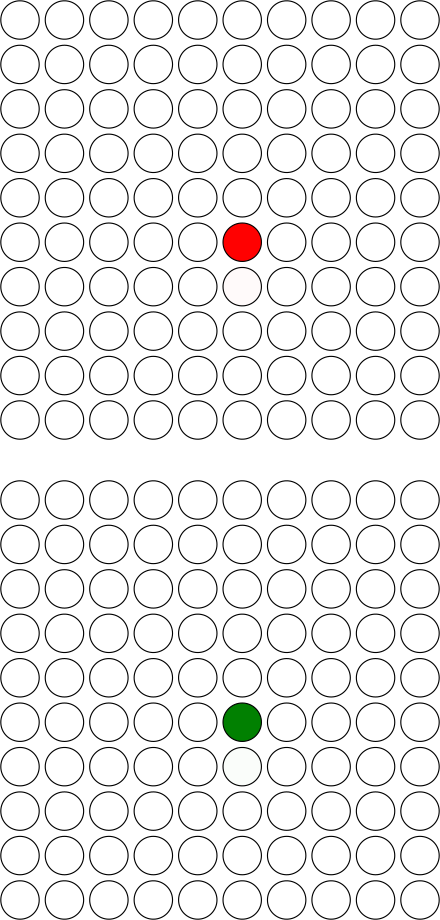}
    }
    \subfloat{
    \includegraphics[width=0.18\textwidth]{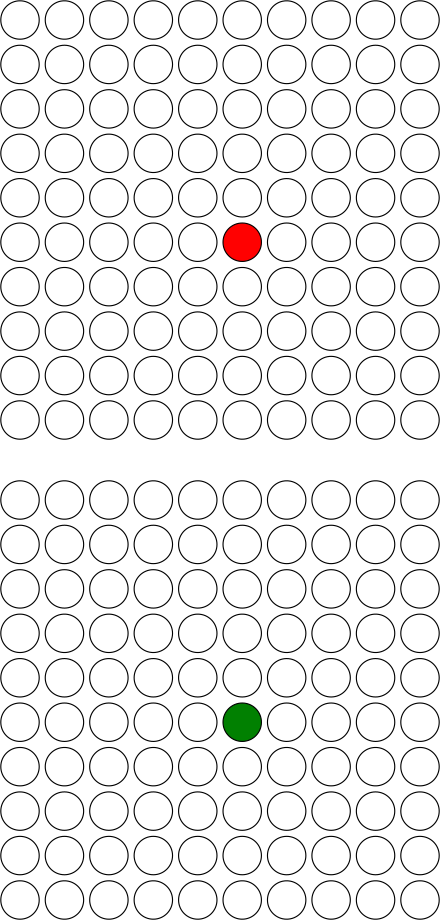}
    }\\
    $p=2$\vspace{-.2cm}\\
    \subfloat{
    \includegraphics[width=0.18\textwidth]{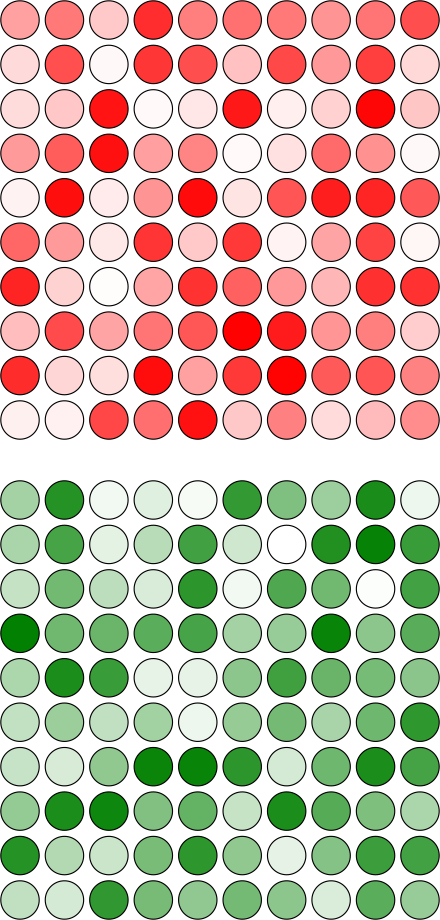}
    }
    \subfloat{
    \includegraphics[width=0.18\textwidth]{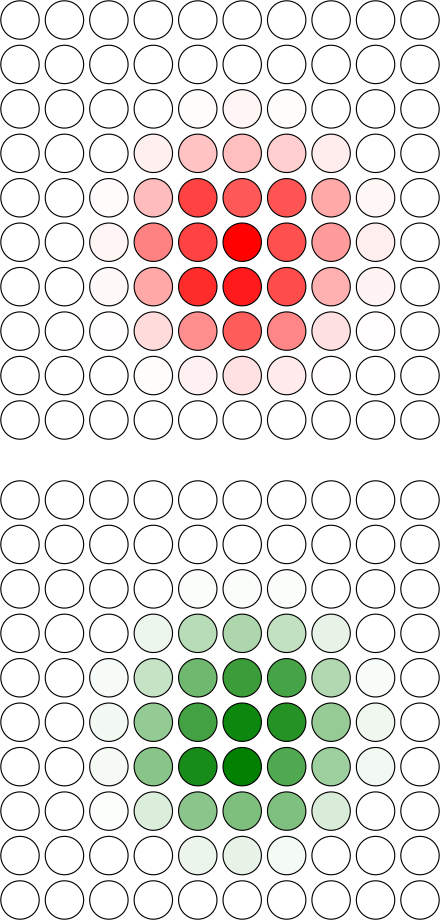}
    }
    \subfloat{
    \includegraphics[width=0.18\textwidth]{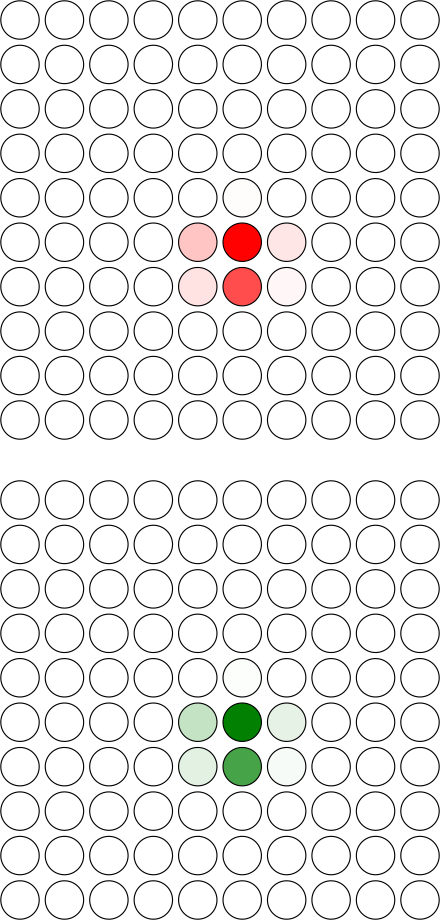}
    }
    \subfloat{
    \includegraphics[width=0.18\textwidth]{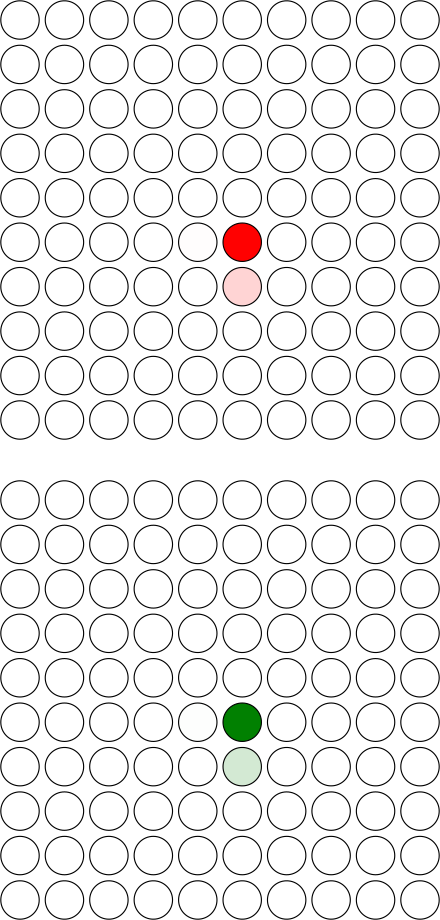}
    }
    \subfloat{
    \includegraphics[width=0.18\textwidth]{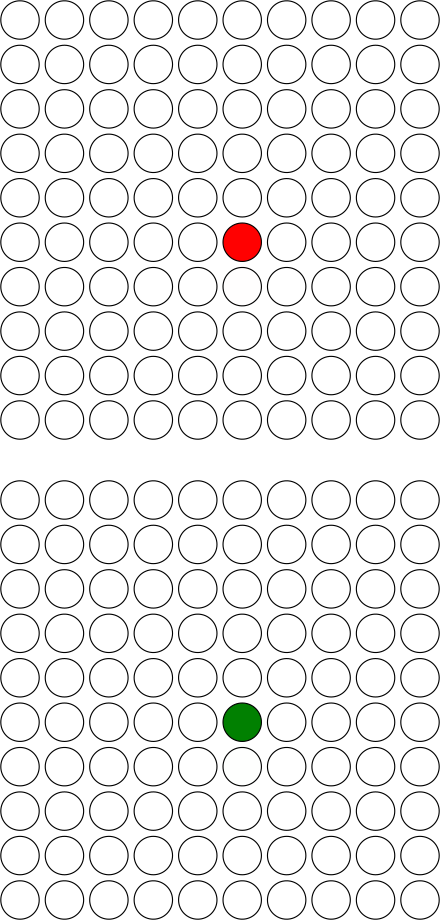}
    }\\
    $p=5$\vspace{-.2cm}\\
    \subfloat[$t=0$]{
    \includegraphics[width=0.18\textwidth]{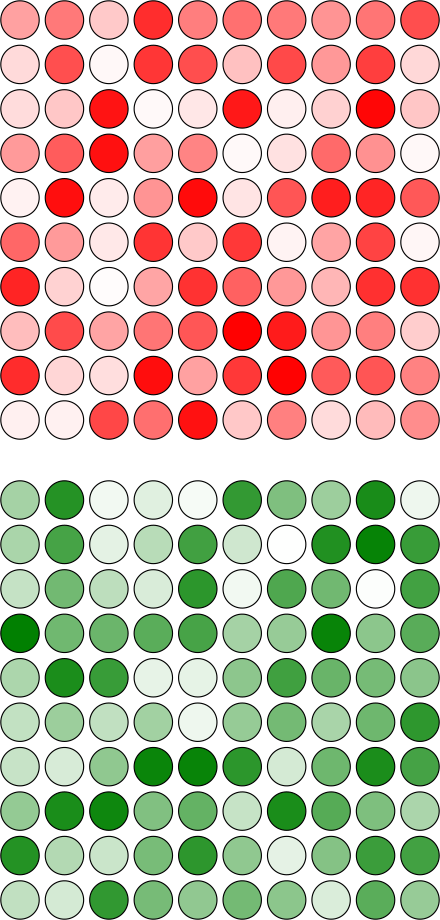}
    }
    \subfloat[$t=10$]{
    \includegraphics[width=0.18\textwidth]{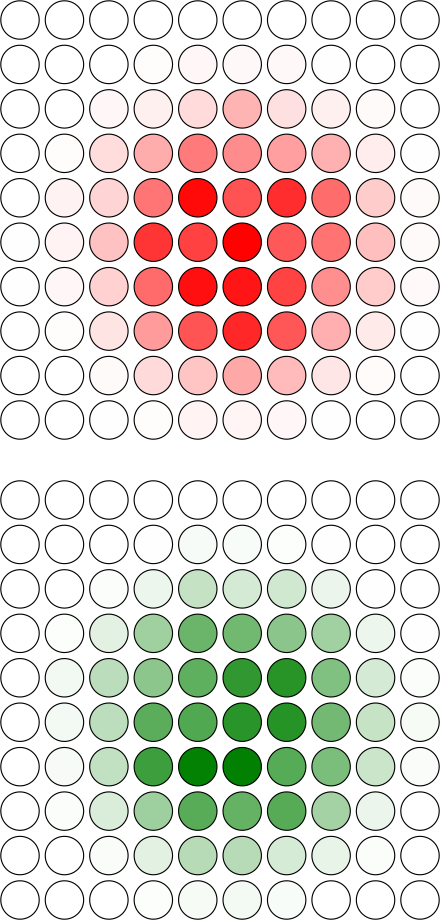}
    }
    \subfloat[$t=300$]{
    \includegraphics[width=0.18\textwidth]{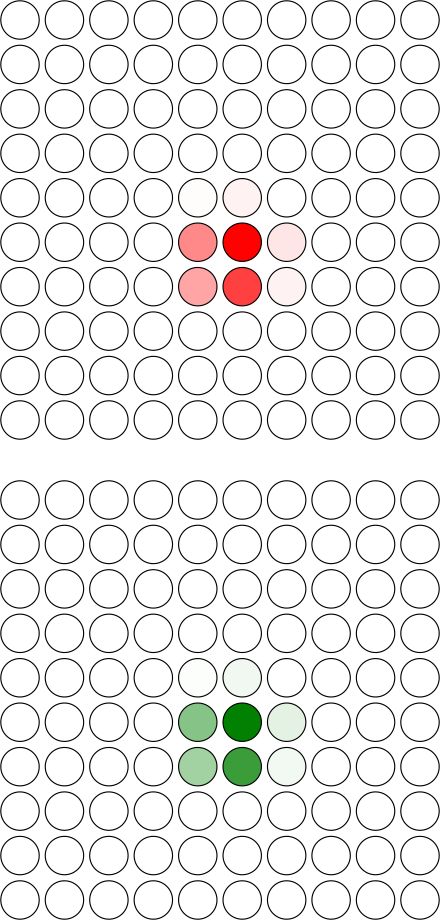}
    }
    \subfloat[$t=1000$]{
    \includegraphics[width=0.18\textwidth]{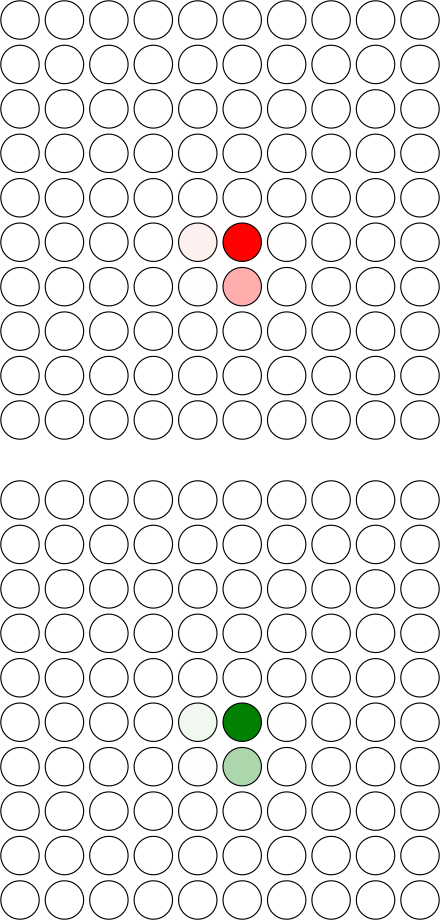}
    }
    \subfloat[$t=2000$]{
    \includegraphics[width=0.18\textwidth]{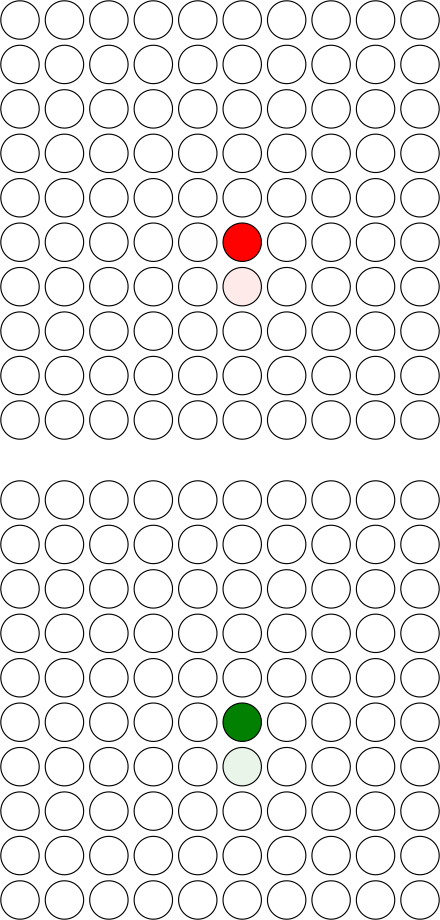}
    }    
    \caption{We display the evolution of both species (red and green, respectively) for attractive self- and cross-interactions, the mobility $m(r,s)=r$ and $p\in\{1.65,2,5\}$, in all cases starting from the same random initial data (left panel). For all $p$ we observe aggregation of both species to a single vertex as time progresses. Comparing the three values of $p$, we see that an increase of $p$ slows down aggregation speed.}
    \label{fig:linear_10by10}
\end{figure}

\begin{figure}[ht!]
    \captionsetup[subfloat]{labelformat=empty}
    \centering
    \subfloat[$t=0$]{
    \includegraphics[width=0.18\textwidth]{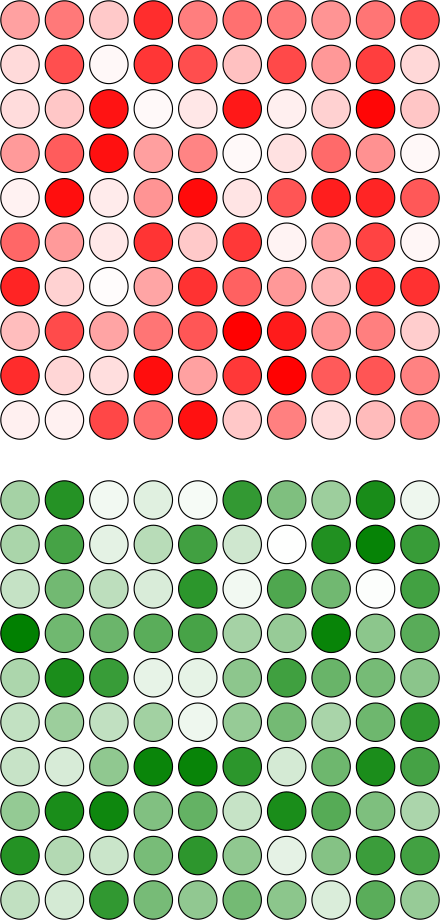}
    }
    \subfloat[$t=10$]{
    \includegraphics[width=0.18\textwidth]{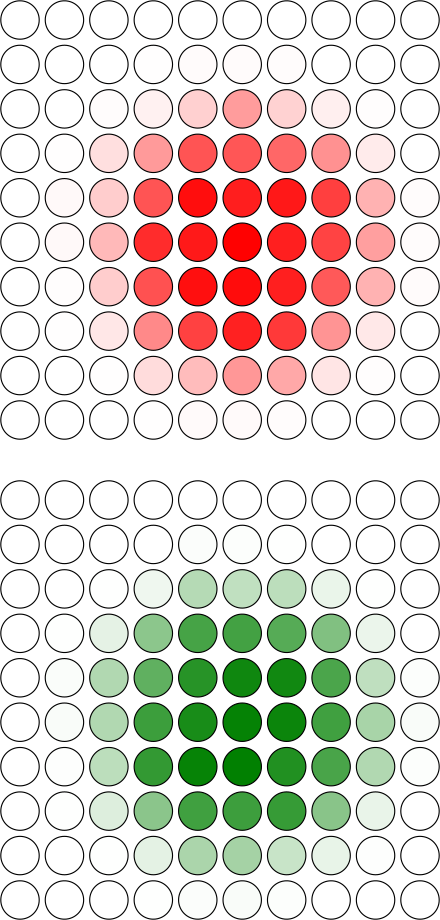}
    }
    \subfloat[$t=300$]{
    \includegraphics[width=0.18\textwidth]{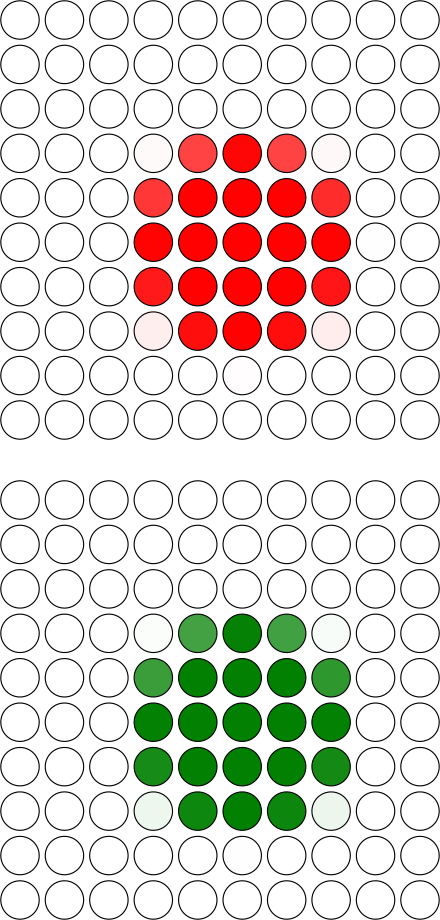}
    }
    \subfloat[$t=1000$]{
    \includegraphics[width=0.18\textwidth]{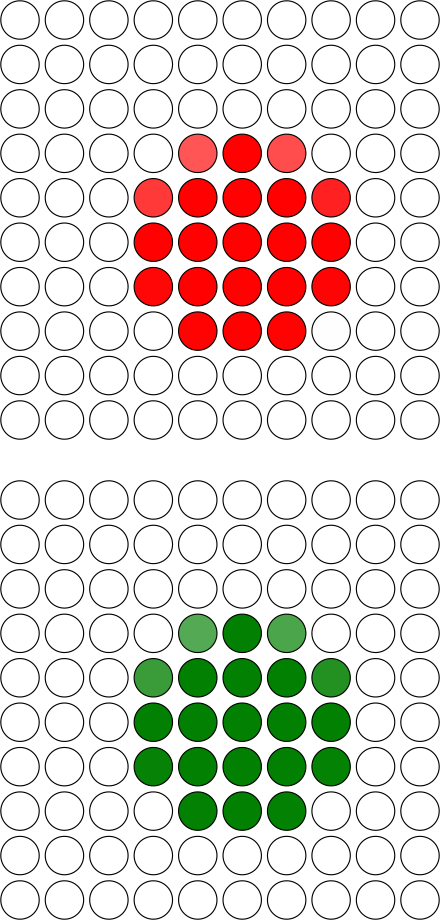}
    }
    \subfloat[$t=2000$]{
    \includegraphics[width=0.18\textwidth]{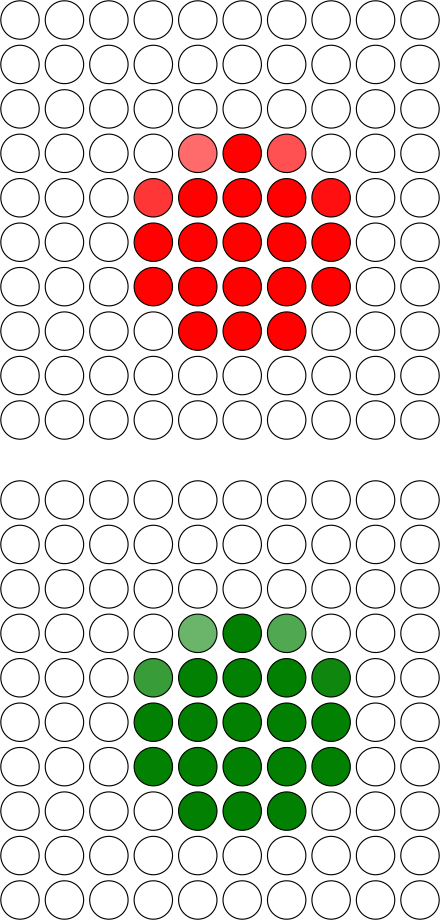}
    }    
    \caption{Starting from random initial data (left panel), we display the evolution of both species (red and green, respectively) for attractive self- and cross-interactions, the mobility $m(r,s)=r(1-s)$, $\mu_\iota=1/20$ for all $\iota\in\{1,\ldots,100\}$ and $p=2$. We observe that the aggregation is confined by the upper bound $\rho^{(i)}_\iota \le \mu_\iota$ for $i=1,2$, which results in the formation of a patch of populated vertices.}
    \label{fig:volume-filling_10by10_p=2}
\end{figure}

\section*{Acknowledgements} 
GH acknowledges support by the Studienstiftung des deutschen Volkes in form of a PhD stipend.
\appendix
\section{Appendix}\label{appendix}
We now provide proofs of the claims made in Subsection \ref{Stationary states on a two-point space for kernels with constant diagonals}. Most arguments rely on a study of the sign of the velocity field, which, in its essence, informs the direction in which mass is being transported. 

\noindent
\textbf{Decoupled case.}\\
The following simplification proves useful for the subsequent analysis. Using $\rho^{(i)}(x_\iota)=1-\rho^{(i)}(x_\kappa)$, we have
\baq\label{eq:v^(i) simplified two-point decoupled}
v^{(i)}_{\iota\kappa}&=\sum_{k=1}^2\sum_{\lambda=1}^{2}\left(K^{(ik)}_{\iota\lambda}-K^{(ik)}_{\kappa\lambda}\right)\rho_\lambda^{(k)}\mu_\lambda=\sum_{\lambda=1}^{2}\left(K^{(ii)}_{\iota\lambda}-K^{(ii)}_{\kappa\lambda}\right)\rho_\lambda^{(i)}\mu_\lambda\\
&=\left(K^{(ii)}_{\iota\iota}-K^{(ii)}_{\kappa\iota}\right)\rho_\iota^{(i)}\mu_\iota+\left(K^{(ii)}_{\iota\kappa}-K^{(ii)}_{\kappa\kappa}\right)\rho_\kappa^{(i)}\mu_\kappa\\
&=-D^{(ii)}\babla\rho^{(i)}_{\iota\kappa}=D^{(ii)}(2\rho^{(i)}(x_\iota)-1)=D^{(ii)}s^{(i)},
\eaq
where we set
\baq\label{eq_def:s^i}
s^{(i)}\coloneqq 2\rho^{(i)}(x_\iota)-1\in[-1,1].
\eaq

\begin{proof}[Proof of Proposition \ref{prop:2-point-energy_decoupled}]
We have seen that for $D^{(12)}=0$ there are two possible ways of obtaining a stationary state:
\begin{enumerate}[label=\alph*)]
\item\label{case decoupled steady not agg_appendix} $v^{(i)}_{\iota\kappa} = 0$,
\item\label{case decoupled steady agg_appendix} $v^{(i)}_{\iota\kappa} > 0$,  $\rho^{(i)}(x_\iota)=0$,
\end{enumerate}
where $\iota,\kappa\in\{1,2\}$ with $\iota\ne \kappa$. 

We see that \ref{case decoupled steady not agg_appendix} always holds if $D^{(ii)}=0$, since by \eqref{eq:v^(i) simplified two-point decoupled} we have $v^{(i)}_{\iota\kappa} = D^{(ii)}s^{(i)}$. Conversely, if $D^{(ii)}\ne0$, then $0 = D^{(ii)}(2\rho^{(i)}_\iota\mu_\iota-1)$ immediately implies $\rho^{(i)}(x_\iota)=\frac{1}{2}$. 

As for statement b), let us assume $D^{(ii)}<0$. 

Inserting $\rho^{(i)}(x_\iota)=1$ in to \eqref{eq:v^(i) simplified two-point decoupled}, we obtain $v^{(i)}_{\iota\kappa} = D^{(ii)}$. Hence, \ref{case decoupled steady agg_appendix} holds if only if $D^{(ii)}<0$. Here, it is important to keep in mind that $\iota=1$ and $\iota=2$ are both possible, leading to aggregation at either $x_1$ or $x_2$.
\end{proof}

\begin{proof}[Proof of Proposition \ref{prop:2-point-dynamics_decoupled}]
We have seen that for $D^{(ii)}=0$ every state $\rho^{(i)}$ is stationary, and that for $D^{(ii)}>0$ the only stationary state is $\rho_*^{(i)}=1/2$, which is therefore attractive, proving the first statement.

Concerning the second statement, let us consider $D^{(ii)}<0$. First, let $\epsilon=0$. Since $\rho_*^{(i)}=1/2$ is stationary, $\rho_t^{i}(x_\iota) = 1/2$, for $\iota =1,2$ and all $t \geq 0$. Next, consider $\varepsilon>0$. Since $v^{(i)}_{\iota\kappa}=D^{(ii)}s^{(i)}$ and $D^{(ii)}<0$, we immediately obtain $v^{(i)}_{\iota\kappa}s^{(i)}=D^{(ii)}\left(s^{(i)}\right)^2<0$ for all $s^{(i)}\ne 0$, i.e. by \eqref{eq:v^(i) simplified two-point decoupled} $v^{(i)}_{\iota\kappa}$ and $s^{(i)}$ must have opposite signs. Thus, $v^{(i)}_{\iota\kappa}<0$, which means mass flows from $x_\kappa$ to $x_\iota$ leading to an aggregation of mass on $x_\iota$. Therefore, $\rho_*^{(i)}=1/2$ is unstable and the stationary states $\rho_*^{(i)}\in\{0,1\}$ are asymptotically stable for any $\varepsilon < 1/2$.
\end{proof}

\noindent
\textbf{Coupled case.}
\begin{proof}[Proof of Proposition \ref{prop:2-point-energy_coupled}]
We recall that by Lemma \ref{lem:stationary state rho_nu^(i) disintegration}, there are four different conditions leading to stationary states:
\begin{enumerate}[label=\alph*)]
\item\label{case steady 0 agg_appendix} $v^{(1)}_{\iota\kappa} = v^{(2)}_{\iota\kappa}=0$,
\item\label{case steady 1 agg_appendix} $v^{(i)}_{\iota\kappa} > 0$ and $\rho^{(i)}(x_\iota)=0$ as well as $v^{(k)}_{\iota\kappa}= 0$,
\item\label{case steady 2 agg 1 node_appendix} $v^{(1)}_{\iota\kappa}>0$ and $v^{(2)}_{\iota\kappa}> 0$  as well as $\rho^{(1)}(x_\iota)=\rho^{(2)}(x_\iota)=0$,
\item\label{case steady 2 agg 2 nodes_appendix} $v^{(1)}_{\iota\kappa}> 0$ and $v^{(2)}_{\iota\kappa}< 0$  as well as $\rho^{(1)}(x_\iota)=0$ and $\rho^{(2)}(x_\iota)=1$,
\end{enumerate}
for $i,k,\iota,\kappa\in\{1,2\}$ with $i\ne k$ and $\iota\ne \kappa$, which we  assume for the remainder of this section. Similar to \eqref{eq:v^(i) simplified two-point decoupled}, we can rewrite $v^{(i)}$ as
\baq\label{eq:T^i kurz}
v^{(i)}_{\iota\kappa} &= D^{(ii)}(2\rho^{(i)}(x_\iota)-1)+D^{(12)}(2\rho^{(k)}(x_\iota)-1) = D^{(ii)}s^{(i)}+D^{(12)}s^{(k)},
\eaq
where $s^{(i)}$ and $s^{(k)}$ are given by \eqref{eq_def:s^i}.
\subparagraph{\ref{case steady 0 agg_appendix}} Using $D^{(12)}\ne0$, we obtain 
\baq\label{eq:s^i,s^k 0 agg gl 1}
s^{(1)}=-\frac{D^{(22)}}{D^{(12)}}s^{(2)},\qquad
s^{(2)}=-\frac{D^{(11)}}{D^{(12)}}s^{(1)}.
\eaq
Hence, if $D^{(11)} = 0$ or $D^{(22)} = 0$, then we immediately find $s^{(1)} = s^{(2)} = 0$, which corresponds to $\rho^{(1)}_\iota\mu_\iota=\rho^{(2)}_\iota\mu_\iota=\frac{1}{2}$. If both $D^{(11)}\ne0$ and $D^{(22)}\ne0$, we additionally have
\baq\label{eq:s^i,s^k 0 agg gl 2}
s^{(1)}=-\frac{D^{(12)}}{D^{(11)}}s^{(2)},\qquad s^{(2)}=-\frac{D^{(12)}}{D^{(22)}}s^{(1)},
\eaq
which we can combine with \eqref{eq:s^i,s^k 0 agg gl 1} to obtain
\baqs
D^{(11)}D^{(22)}s^{(i)} = \left(D^{(12)}\right)^2s^{(i)},
\eaqs
for $i=1,2$. Therefore, if $D^{(11)}D^{(22)}\ne\left(D^{(12)}\right)^2$ we also find $s^{(1)} = s^{(2)} = 0$. Otherwise all equations in \eqref{eq:s^i,s^k 0 agg gl 1} and \eqref{eq:s^i,s^k 0 agg gl 2} are equivalent and define a family of stationary states as claimed.

\subparagraph{\ref{case steady 1 agg_appendix}} Here, if $D^{(kk)} = 0$, then, by \eqref{eq:T^i kurz} $v^{(k)}=0 \iff D^{(12)}=0$. Hence, we require $D^{(kk)} \ne 0$. Then, $v^{(k)}=0$ gives 
\baq\label{eq:1 agg s^i}
s^{(k)}=-\frac{D^{(12)}}{D^{(kk)}}.
\eaq
The fact that $s^{(k)}\in[-1,1]$  implies
\baq\label{eq:1 agg existenz 1 a}
\abs{D^{(12)}}\le \abs{D^{(kk)}}.
\eaq
This is a necessary condition for the stationary state to exist. The inequality $v^{(i)}<0$ gives us the additional condition
\baq\label{eq:1 agg existenz 2 a}
D^{(ii)}< -D^{(12)}s^{(k)} =\frac{\left(D^{(12)}\right)^2}{D^{(kk)}}.
\eaq
It is important to keep in mind that we can exchange $i$ and $k$, leading to an exchange of $i$ and $k$ in \eqref{eq:1 agg s^i}, \eqref{eq:1 agg existenz 1 a} and \eqref{eq:1 agg existenz 2 a}. Also, we can exchange $\iota$ and $\kappa$, which corresponds to exchanging $s^{(i)}$ for $-s^{(i)}$, $s^{(k)}$ for $-s^{(k)}$, $v^{(i)}$ for $-v^{(i)}$ and $v^{(k)}$ for $-v^{(k)}$, but does not affect any of the conditions \eqref{eq:1 agg s^i}, \eqref{eq:1 agg existenz 1 a} and \eqref{eq:1 agg existenz 2 a}. Hence, for any given combination of interaction kernels, there exist $0$, $2$ or $4$ stationary states of the form \ref{case steady 1 agg_appendix}.

\subparagraph{\ref{case steady 2 agg 1 node_appendix} and \ref{case steady 2 agg 2 nodes_appendix}}
Combining \ref{case steady 2 agg 1 node_appendix}, \eqref{eq_def:s^i} and \eqref{eq:T^i kurz} immediately gives us the existence conditions $D^{(ii)} < -D^{(12)}$, $i=1,2$. Similarly, combining \ref{case steady 2 agg 2 nodes_appendix}, \eqref{eq_def:s^i} and \eqref{eq:T^i kurz} yields the existence conditions $D^{(ii)} < D^{(12)}$, $i=1,2$.
\end{proof}
\begin{proof}[Proof of Corollary \ref{cor:only_a<=>pos_semidef}]
Each of the conditions \eqref{eq:a_only} and \eqref{eq:a,r_only} immediately implies that \eqref{eq:1 agg existenz 2} is violated no matter the choice of $i$ and $k$. Additionally, it implies $D^{(ii)} \geq \abs{D^{(12)}}\geq \pm D^{(12)}$ for at least one $i\in\{1,2\}$ and thus also the violation of both \eqref{eq:2 agg 1 node existenz} and \eqref{eq:2 agg 2 nodes existenz}. Conversely, if both \eqref{eq:2 agg 1 node existenz} and \eqref{eq:2 agg 2 nodes existenz} are violated, then $D^{(kk)} \geq \abs{D^{(12)}} > 0$ must hold for at least one $k\in\{1,2\}$, which implies \eqref{eq:1 agg existenz 1}. Therefore, \eqref{eq:1 agg existenz 2} must not be satisfied, i.e., we must have $D^{(11)}D^{(22)}\geq \left(D^{(12)}\right)^2$ and consequently $D^{(ii)}>0$ for $i\ne k$.
\end{proof}
\begin{proof}[Proof of Corollary \ref{cor:c+d<=>b^2}]
For the first claim observe that \eqref{eq:2 agg 1 node existenz} together with \eqref{eq:2 agg 2 nodes existenz} imply
\baqs
D^{(11)}&< -\abs{D^{(12)}},\quad D^{(22)}< -\abs{D^{(12)}}.
\eaqs
This implies \eqref{eq:1 agg existenz 1} as well as
\baqs
D^{(11)}D^{(22)}>\left(D^{(12)}\right)^2,\quad D^{(11)}<0,\quad D^{(22)}<0,
\eaqs
which yields \eqref{eq:1 agg existenz 2} for both $k=1$ and $k=2$.

For the converse case we have
\baq\label{eq:c+d<=>b^2_aux2}
\abs{D^{(11)}}\geq \abs{D^{(12)}},\quad \abs{D^{(22)}}\geq \abs{D^{(12)}},\quad D^{(11)}<\frac{\left(D^{(12)}\right)^2}{D^{(22)}}, \quad D^{(22)}<\frac{\left(D^{(12)}\right)^2}{D^{(11)}}.
\eaq
From this we immediately see that $\sgn\left(D^{(11)}\right)=\sgn\left(D^{(22)}\right)$ as otherwise either the third or the fourth inequality in \eqref{eq:c+d<=>b^2_aux2} is violated. Additionally taking into account the first and second inequality in \eqref{eq:c+d<=>b^2_aux2}, we find that $\sgn\left(D^{(11)}\right)=\sgn\left(D^{(22)}\right) = -1$ as otherwise we would have a contradiction. This gives us
\baqs
D^{(11)}&\le -\abs{D^{(12)}},\quad D^{(22)}\le -\abs{D^{(12)}},
\eaqs
which, when combined with \eqref{eq:c+d<=>b^2_aux2}, shows the claim.
\end{proof}
\begin{lemma}\label{lem:2-point-energy-hierarchy}
Let $N=2$, let \eqref{eq:K^ik const diag} and \eqref{eq:m=0_iff_r=0} hold, and assume $D^{(12)}_{12}\ne 0$. Then, for the stationary states given in Proposition \ref{prop:2-point-energy_coupled}, we have
\begin{enumerate}[label=\arabic*.]
\item If $D^{(11)}D^{(22)}=\left(D^{(12)}\right)^2$, then $\mathcal{E}(\rhoup_{\mathrm{a}_r}) = \mathcal{E}(\rhoup_{\mathrm{a}})$ for $r\in\left[-1\lor -\abs{\frac{D^{(22)}}{D^{(12)}}},1\land \abs{\frac{D^{(22)}}{D^{(12)}}}\right]$.
\item If for $i\in\{1,2\}$ $\dot\rhoup_{\mathrm{b}_i}=0$ for $i\in\{1,2\}$, then $\mathcal{E}(\rhoup_{\mathrm{b}_i}) < \mathcal{E}(\rhoup_\mathrm{a})$.
\item If $\rhoup_\mathrm{c}$ is a stationary state, then $\mathcal{E}(\rhoup_\mathrm{c}) < \mathcal{E}(\rhoup_\mathrm{a})$.
\item If $\rhoup_\mathrm{d}$ is a stationary state, then $\mathcal{E}(\rhoup_\mathrm{d}) < \mathcal{E}(\rhoup_\mathrm{a})$.
\item If $\dot\rhoup_{\mathrm{b}_i}=\rhoup_\alpha=0$ for $\alpha\in\{\mathrm{c},\mathrm{d}\}$ and $i\in\{1,2\}$, then $\mathcal{E}(\rhoup_\alpha) < \mathcal{E}(\rhoup_{\mathrm{b}_i})$.
\item If $\rhoup_\mathrm{c}$ and $\rhoup_\mathrm{d}$ are stationary states, then we have $\sgn(\mathcal{E}(\rhoup_\mathrm{d})-\mathcal{E}(\rhoup_\mathrm{c})) = -\sgn\left(D^{(12)}\right)$.
\end{enumerate}
\end{lemma}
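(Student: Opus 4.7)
The plan is to reduce all six assertions to a sign analysis of a single quadratic form in $s^{(i)} := 2\rho^{(i)}(x_1) - 1 \in [-1,1]$. Using \eqref{eq:K^ik const diag} to write $K^{(ik)}_{\iota\iota} = c_{ik}$ and $K^{(ik)}_{\iota\kappa} = c_{ik} - D^{(ik)}$ for $\iota \ne \kappa$, together with the identity $\rho^{(i)}(x_1)\rho^{(k)}(x_2) + \rho^{(i)}(x_2)\rho^{(k)}(x_1) = (1 - s^{(i)}s^{(k)})/2$ (and the harmless symmetrization $K^{(12)} = K^{(21)}$, which the energy only sees in symmetrized form), a direct expansion of \eqref{eq:Energy graph} yields
\begin{equation*}
\mathcal{E}(\rhoup) = C + \frac{1}{4}Q(s^{(1)}, s^{(2)}), \qquad Q(s^{(1)}, s^{(2)}) := D^{(11)}(s^{(1)})^2 + 2D^{(12)}s^{(1)}s^{(2)} + D^{(22)}(s^{(2)})^2,
\end{equation*}
with $C$ independent of $\rhoup$. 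Since $\rhoup_{\mathrm{a}}$ corresponds to $s^{(1)} = s^{(2)} = 0$, every comparison against $\rhoup_{\mathrm{a}}$ becomes the evaluation of $Q$ at the $s$-coordinates of the respective stationary state, read from Proposition \ref{prop:2-point-energy_coupled}.

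Claims 1, 3, 4, and 6 follow by direct substitution. For claim 1, $(s^{(1)}, s^{(2)}) = (r, -D^{(12)}r/D^{(22)})$ gives $Q = (r^2/D^{(22)})(D^{(11)}D^{(22)} - (D^{(12)})^2) = 0$ by hypothesis. For claims 3 and 4, $s^{(1)}s^{(2)} = \pm 1$ and $Q = D^{(11)} \pm 2D^{(12)} + D^{(22)}$, which is strictly negative by the existence conditions \eqref{eq:2 agg 1 node existenz} and \eqref{eq:2 agg 2 nodes existenz}, respectively. Claim 6 is then the difference $\mathcal{E}(\rhoup_{\mathrm{d}}) - \mathcal{E}(\rhoup_{\mathrm{c}}) = -D^{(12)}$, yielding the claimed sign.

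For claim 2, inserting $s^{(i)} = \pm 1$ and $s^{(k)} = \mp D^{(12)}/D^{(kk)}$ produces $Q_{\mathrm{b}_i} = (D^{(ii)}D^{(kk)} - (D^{(12)})^2)/D^{(kk)}$. Condition \eqref{eq:1 agg existenz 2} states that $D^{(ii)}D^{(kk)} - (D^{(12)})^2$ has sign opposite to $D^{(kk)}$ in both cases $\sgn D^{(kk)} = \pm 1$ (the value $D^{(kk)} = 0$ being excluded by the proof of Proposition \ref{prop:2-point-energy_coupled} since $D^{(12)} \ne 0$), so $Q_{\mathrm{b}_i} < 0$ and hence $\mathcal{E}(\rhoup_{\mathrm{b}_i}) < \mathcal{E}(\rhoup_{\mathrm{a}})$.

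The main computation is claim 5. A short bilinear manipulation produces the clean factorizations
\begin{equation*}
\mathcal{E}(\rhoup_{\mathrm{c}}) - \mathcal{E}(\rhoup_{\mathrm{b}_i}) = \frac{(D^{(12)} + D^{(kk)})^2}{4D^{(kk)}}, \qquad \mathcal{E}(\rhoup_{\mathrm{d}}) - \mathcal{E}(\rhoup_{\mathrm{b}_i}) = \frac{(D^{(12)} - D^{(kk)})^2}{4D^{(kk)}}.
\end{equation*}
The nonobvious step, and the main obstacle, is verifying strict negativity. Combining $|D^{(12)}| \le |D^{(kk)}|$ from \eqref{eq:1 agg existenz 1} with either $D^{(kk)} < -D^{(12)}$ (when $\rhoup_{\mathrm{c}}$ is stationary) or $D^{(kk)} < D^{(12)}$ (when $\rhoup_{\mathrm{d}}$ is stationary), a case split on $\sgn D^{(kk)}$ rules out $D^{(kk)} \ge 0$ in both situations, forcing $D^{(kk)} < 0$. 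Moreover, the strict inequalities in \eqref{eq:2 agg 1 node existenz} and \eqref{eq:2 agg 2 nodes existenz} ensure $D^{(12)} \pm D^{(kk)} \ne 0$, so the numerators are strictly positive; together with the strictly negative denominator, this yields the claimed strict inequality $\mathcal{E}(\rhoup_\alpha) < \mathcal{E}(\rhoup_{\mathrm{b}_i})$.
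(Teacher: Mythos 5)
Your proposal is correct and follows essentially the same route as the paper: the quadratic-form representation $\mathcal{E}=C+\tfrac14 Q(s^{(1)},s^{(2)})$ is exactly the paper's identity \eqref{eq:2-point-energy:D-form} from Remark \ref{rem:stability_a}, and the pairwise differences you obtain coincide with the paper's table \eqref{eq:Energievergleich two-point stationary states}, after which both arguments conclude by feeding in the existence conditions \eqref{eq:1 agg existenz 1}--\eqref{eq:2 agg 2 nodes existenz} (in particular deducing $D^{(kk)}<0$ for claim 5). The only cosmetic difference is that you derive the difference formulas from the quadratic form explicitly where the paper cites "a short computation."
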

\begin{proof}
In Theorem \ref{thm:aggregation} we have identified conditions implying that aggregation of one or both species is energetically optimal. Now we want to specify these findings in the 2-point setting. To this end we calculate the signed differences between the energies of the explicit stationary states derived in Proposition \ref{prop:2-point-energy_coupled}. Setting $\dd_{\alpha_1 \alpha_2}\coloneqq\mathcal{E}(\rhoup_{\alpha_2}) - \mathcal{E}(\rhoup_{\alpha_1})$, for different combinations of $\alpha_1, \alpha_2 \in \{$a, a$_r$, b$_1$, b$_2$, c, d$\}$. A short computation yields
\begin{subequations}
\label{eq:Energievergleich two-point stationary states}
\baqs
&\dd_\text{aa$_r$}=\frac{1}{4}\left(D^{(11)}r^2-\frac{\left(D^{(12)}\right)^2}{D^{(22)}}r^2\right),\;\; &&\dd_\text{ab$_i$}=\frac{1}{4}\left(D^{(ii)}-\frac{\left(D^{(12)}\right)^2}{D^{(kk)}}\right),\\
&\dd_\text{ac}=\frac{1}{4}\left(D^{(11)}+2D^{(12)}+D^{(22)}\right),\;\;
&&\dd_\text{ad}=\frac{1}{4}\left(D^{(11)}-2D^{(12)}+D^{(22)}\right),\\
&\dd_\text{b$_i$c}=\frac{1}{4D^{(kk)}}\left(D^{(kk)}+D^{(12)}\right)^2,\;\;
&&\dd_\text{b$_i$d}=\frac{1}{4D^{(kk)}}\left(D^{(kk)}-D^{(12)}\right)^2, \;\;
&&&\dd_\text{cd}=-D^{(12)}.
\eaqs
\end{subequations}
With this, $D^{(11)}D^{(22)}=\left(D^{(12)}\right)^2$ immediately yields $\dd_\text{aa$_r$}=0$.

Applying the conditions which imply that $\rhoup_{\mathrm{b}_i}$, $\rhoup_\mathrm{c}$ and $\rhoup_\mathrm{d}$ define stationary states to \eqref{eq:Energievergleich two-point stationary states}, we make the following observations:
\begin{itemize}
    \item We have seen that $\rhoup_{\mathrm{b}_i}$ is a stationary state if both \eqref{eq:1 agg existenz 1} and \eqref{eq:1 agg existenz 2} hold. However, \eqref{eq:1 agg existenz 2} immediately yields $\dd_{\text{ab}_i}< 0$. Hence, if there is a stationary state of the form $\rhoup_{\mathrm{b}_i}$, then it admits a lower energy than $\rhoup_\mathrm{a}$.
    \item We have found that $\rhoup_\mathrm{c}$ defines two stationary states if \eqref{eq:2 agg 1 node existenz} holds, which immediately implies that $\dd_\text{ac}< 0$. Similarly, we find by \eqref{eq:2 agg 2 nodes existenz} that $\dd_\text{ad}< 0$ if $\rhoup_\mathrm{d}$ defines a stationary state. Thus, $\rhoup_\mathrm{c}$ and $\rhoup_\mathrm{d}$ respectively admit lower energies than $\rhoup_\mathrm{a}$, if they are stationary states. 
    \item If \eqref{eq:1 agg existenz 1} and \eqref{eq:1 agg existenz 2} as well as \eqref{eq:2 agg 1 node existenz} hold, then we immediately obtain that $D^{(kk)}< 0$. Analogously, \eqref{eq:1 agg existenz 1} and \eqref{eq:1 agg existenz 2} together with \eqref{eq:2 agg 2 nodes existenz} also imply $D^{(kk)}< 0$. Thus, if $\rhoup_{\mathrm{b}_i}$ as well as either $\rhoup_\mathrm{c}$ or $\rhoup_\mathrm{d}$ are stationary states, then aggregation of both species admits a lower energy than aggregation of only one species.
    \item Finally, if \eqref{eq:2 agg 1 node existenz} and \eqref{eq:2 agg 2 nodes existenz} are simultaneously satisfied,  $\rhoup_\mathrm{c}$ admits a lower energy than $\rhoup_\mathrm{d}$ if we have cross-attraction, while $\rhoup_\mathrm{d}$ admits a lower energy than $\rhoup_\mathrm{c}$ if we have cross-repulsion. Further, \eqref{eq:2 agg 1 node existenz} and \eqref{eq:2 agg 2 nodes existenz} in conjunction imply \eqref{eq:1 agg existenz 1} and \eqref{eq:1 agg existenz 2} for both $i=1,k=2$ and $i=2,k=1$. In this situation all candidates are stationary states.
\end{itemize}
\end{proof}
\begin{remark}[Stability of the energy minimizer]\label{rem:stability_lowest}
If $D^{(11)}D^{(22)}\ne \left(D^{(12)}\right)^2$ and $\rhoup_\mathrm{a}$ is not the unique stationary state, then there exist exactly two ($D^{(12)}\ne 0$) or exactly four ($D^{(12)}= 0$) points minimizing the energy cf. Corollary \ref{cor:c+d<=>b^2} and Lemma \ref{lem:2-point-energy-hierarchy}. Since these are distinct from each other and since the energy can not increase along trajectories of the gradient flow by \eqref{eq:energy time derivative}, the two are asymptotically stable. 
\end{remark}

\begin{remark}[Stability of $\rhoup_\mathrm{a}$]\label{rem:stability_a}
Employing the assumption that all kernels are constant on the diagonal, we can rewrite the energy as
\baq\label{eq:2-point-energy:D-form}
\mathcal{E}(\rhoup)
&=D^{(11)}\left(\rho^{(1)}(x_1)-\frac{1}{2}\right)^2+D^{(22)}\left(\rho^{(2)}(x_1)-\frac{1}{2}\right)^2\\
&+ 2D^{(12)}\left(\rho^{(1)}(x_1)-\frac{1}{2}\right)\left(\rho^{(2)}(x_1)-\frac{1}{2}\right)+\sum_{i,k=1}^2\frac{1}{4}\left(K^{(ik)}_{11}+K^{(ik)}_{12}\right),
\eaq
which shows that the level sets of the energy are quadrics centered around $(1/2,1/2)$ and lets us identify $\rho_\mathrm{a}=(1/2,1/2)$ as a critical point. Moreover, it allows for a quick way to verify that the hessian of $\mathcal{E}$ reads
\baqs
H_{\mathcal{E}}=2\begin{pmatrix} D^{(11)} & D^{(12)}\\ 
D^{(12)} & D^{(22)} \end{pmatrix},
\eaqs
which is positive definite if and only if \eqref{eq:a_only} holds. Hence, $\rhoup_\mathrm{a}$ is a minimizer of the energy if and only if we have \eqref{eq:a_only}. By Corollary \ref{cor:only_a<=>pos_semidef} this is equivalent to $\rhoup_\mathrm{a}$ being the only stationary state of the dynamics on the two-point space. 

This also means that $\rhoup_\mathrm{a}$ is unstable, whenever $D^{(11)}D^{(22)}\ne \left(D^{(12)}\right)^2$ and a second stationary state other than $\rhoup_\mathrm{a}$ exists. What is more, the proof of Corollary \ref{cor:c+d<=>b^2} in conjunction with \eqref{eq:2-point-energy:D-form} show that $\rhoup_\mathrm{a}$ maximizes the energy if we have $\dot\rhoup_\mathrm{c}=\dot\rhoup_\mathrm{d}=0$.
\end{remark}

\begin{remark}[Stability of $\rhoup_\mathrm{c}$ and  $\rhoup_\mathrm{d}$]\label{rem:stability_c,d}
If $\dot\rhoup_\mathrm{d}\ne\dot\rhoup_\mathrm{c}=0$, we have seen that $\rhoup_\mathrm{c}$ minimizes the energy and is therefore asymptotically stable by the Remark \ref{rem:stability_lowest}. Analogously, $\rhoup_\mathrm{d}$ is asymptotically stable whenever $\dot\rhoup_\mathrm{c}\ne\dot\rhoup_\mathrm{d}=0$. 

Let us now consider the case $\dot\rhoup_\mathrm{c}=\dot\rhoup_\mathrm{d}=0$. From Corollary  \ref{cor:c+d<=>b^2} we know that in this case we also have $\dot\rhoup_{\mathrm{b}_1}=\dot\rhoup_{\mathrm{b}_2}=0$. By the proof of Corollary \ref{cor:c+d<=>b^2} we have $D^{(11)}<0$, $D^{(22)}<0$ and $\abs{D^{(12)}}<\sqrt{D^{(11)}D^{(22)}}$. Therefore, equation \eqref{eq:2-point-energy:D-form} tells us that the quadrics describing the level sets of $\mathcal{E}$ are ellipses and that the energy is maximized at $(1/2,1/2)$. Since every stationary state other than $\rhoup_\mathrm{a}$ lies on the border of the feasible set, the energy level sets of these stationary states touch the border of the feasible set. In particular there is at least one stationary state of the form $\rhoup_{\mathrm{b}_i}, i\in\{1,2\}$ on each edge of the feasible set. This means that the set $\{\rho^{(1)}(x_1),\rho^{(2)}(x_1)\in[0,1]\times[0,1]:\mathcal{E}(\rho)<\mathcal{E}(\rhoup_{\mathrm{b}_1})\land\mathcal{E}(\rhoup_{\mathrm{b}_2})\}$ consists of four different connected components, each located in one of the corners of the feasible set (see for example Figure \ref{fig:two-point_self:att_vary:cross}). Since by \eqref{eq:energy time derivative} the energy along the trajectories of the gradient flow cannot increase and the trajectories are continuous, a trajectory starting in one of these components can never leave it and must therefore converge to the stationary state located in the corner of the feasible set, which is contained in the component. Hence, whenever $\rhoup_\mathrm{c}$ or $\rhoup_\mathrm{d}$ are stationary states, they are asymptotically stable.
\end{remark}

\begin{remark}[Stability of $\rhoup_\text{b$_i$}$]\label{rem:stability_b}
From Remark \ref{rem:stability_lowest} and Lemma \ref{lem:2-point-energy-hierarchy} we know that for $i\in\{1,2\}$ the stationary states of the form $\rhoup_\text{b$_i$}$ are asymptotically stable, if the only other existing stationary state is $\rhoup_\mathrm{a}$.

From Corollary \ref{cor:c+d<=>b^2} we know that  $\dot\rhoup_\mathrm{c}=0$ or $\dot\rhoup_\mathrm{d}=0$, if $\dot\rhoup_\text{b$_1$}=\dot\rhoup_\text{b$_2$}=0$. Thus, we now consider the case, where $\dot\rhoup_\text{b$_i$}=0$ for $i\in\{1,2\}$ and either $\dot\rhoup_\mathrm{c}=0$ or $\dot\rhoup_\mathrm{d}=0$. For $k\in\{1,2\}$, $k\ne i$ as before, \eqref{eq:1 agg existenz 1} in conjunction with \eqref{eq:2 agg 1 node existenz} or \eqref{eq:2 agg 2 nodes existenz} implies that we have $D^{(kk)}<\pm D^{(12)}\le\abs{D^{(12)}} \le\abs{D^{(kk)}}$, i.e., $D^{(kk)}<0$. We introduce a small perturbation $\rho^{(i)}=\rho^{(i)}_\text{b$_i$}$, $\rho^{(k)}(x_\iota)=\rho^{(k)}_\text{b$_i$}(x_\iota)+\varepsilon$, $\rho^{(k)}(x_\kappa)=\rho^{(k)}_\text{b$_i$}(x_\kappa)-\varepsilon$, for $\abs{\varepsilon}$ sufficiently small. Upon comparing its energy with that of $\rho_\text{b$_i$}$, we observe
\baqs
\mathcal{E}(\rhoup)-\mathcal{E}(\rhoup_{\mathrm{b}_i})&=D^{(kk)}\varepsilon^2 < 0.
\eaqs
By \eqref{eq:energy time derivative} the energy cannot increase along the gradient flow and the perturbed distribution converges to a different stationary state implying the instability of $\rho_\text{b$_i$}$.
\end{remark}

\end{document}